\documentclass[10pt,a4paper,twoside]{amsart} 
\usepackage[english]{babel}
\usepackage[latin1]{inputenc}
\usepackage[pdftex]{graphicx}
\usepackage{hyperref}
\usepackage{amssymb}
\usepackage{adjustbox}
\usepackage{amsmath}
\usepackage{latexsym}
\usepackage{amsthm}
\usepackage{verbatim}
\usepackage{tikz-cd}
\usepackage{enumitem}
\usepackage{afterpage}
\usepackage{xcolor,colortbl,soul}   
\usepackage{tabularx}
\counterwithin{figure}{section}
\newcolumntype{P}[1]{>{\centering\arraybackslash}p{#1}}
\usepackage{chngpage}
\usepackage{geometry}
\geometry{
 a4paper,
 total={170mm,230mm},              
 left=25mm, top=25mm,
} 
\def\edgewidth {0.5 pt}
\def\vertexsize {1.2pt}   
\newcommand{\vertex}[2][1]{\fill (#2) circle [radius = #1 * \vertexsize];}
\newcommand{\divisor}[2][1]{\fill [red] (#2) circle [radius = 2* \vertexsize];}

\usepackage{xcolor,color,soul}

\definecolor{verde}{rgb}{0.01, 0.75, 0.24}

\newcommand{\val}{\operatorname{val}}

\theoremstyle{plain}                       
\newtheorem{theo}{Theorem}[section]
\newtheorem{prop}[theo]{Proposition}    
\newtheorem{coro}[theo]{Corollary}       
\newtheorem{lemm}[theo]{Lemma} 

\theoremstyle{definition}               
\newtheorem{defin}{Definition}

\theoremstyle{remark} 
\newtheorem{example}[theo]{Example} 
\newtheorem{rema}[theo]{Remark}  
\title{Tropical Trigonal Curves}
\date{}

\author{Margarida Melo}
\address{Universit\`a di Roma Tre\\ 
Dipartimento di Matematica e Fisica\\
Largo San Leonardo Murialdo 1\\
00146 Rome (Italy)}
\email{margarida.melo@uniroma3.it}

\author{Angelina Zheng}
\address{Universit\"at T\"ubingen\\
Fachbereich Mathematik\\
Auf der Morgenstelle 10\\
72076 T\"ubingen (Germany)}
\email{zheng@math.uni-tuebingen.de}

\begin{document}
\begin{abstract}
    We prove that the existence of a divisor of degree $3$ and Baker-Norine rank at least $1$ on a $3$-edge connected tropical curve is equivalent to the existence of a non-degenerate harmonic morphism of degree $3$ from a tropical modification of it to a tropical rational curve. 
    Using the second description, we define the moduli spaces of $3$-edge connected tropical trigonal covers and of $3$-edge connected tropical trigonal curves, the latter as a locus in the moduli space of tropical curves. Finally, we prove that the moduli space of $3$-edge connected genus $g$ tropical trigonal curves has the same dimension as the moduli space of genus $g$ algebraic trigonal curves.
\end{abstract}

\thanks{MM is supported by MIUR via the projects  PRIN2017SSNZAW (Advances in Moduli Theory and Birational Classification),  PRIN 2022L34E7W (Moduli spaces and birational geometry) and PRIN 2020KKWT53 (Curves, Ricci flat Varieties
and their Interactions), and is a member of the Centre for Mathematics of the University of
Coimbra -- UIDB/00324/2020, funded by the Portuguese Government through FCT/MCTES. 
AZ is supported by Alexander von Humboldt Foundation.
MM and AZ are members of the INDAM group GNSAGA}

\maketitle
\tableofcontents
\section{Introduction}
A smooth curve $C$ is said to be \emph{d-gonal} if it admits a $g_d^1$, i.e. a line bundle of degree~$d$ with non-trivial space of sections. 
The \emph{gonality} of $C$ is the smallest $d\in\mathbb Z_{>0}$ such that $C$ is $d$-gonal.
Since the datum of a $g_d^1$ on $C$  is equivalent to the existence of a morphism of degree $d$ from $C$ to $\mathbb P^1$, the gonality of $C$ is also the smallest degree of such a morphism.

The moduli space $M_g$ of smooth projective genus $g$ curves admits a stratification by \emph{gonality}, for $g\geq 3$:
$$ M_{g,2}^1\subseteq M_{g,3}^1\subseteq \dots M_{g,d}^1\subseteq\dots\subseteq M_g,$$
where $M^1_{g,d}:=\{\left[C\right] \in M_g: C \text{ has a }g^1_d\}$ is
an irreducible variety of dimension $2g+2d - 5$ when $d \leq (g + 2)/2$ and $M^1_{g,d}=M_g$ when $d \geq (g + 2)/2$ (see~\cite{AC81}).

Understanding the geometry of the locus of $M_g$ of curves with fixed gonality is a very natural problem, as the use of the additional data of the $g^1_d$ carries important information about the curve itself. Indeed, the locus $H_g=M_{g,2}^1$ of hyperelliptic curves is among the most studied loci of $M_g$. We know a lot about the geometry of this locus, e.g. its rational cohomology is completely known and it is tautological. 
The rational cohomology of the moduli space $H_{g,n}$ of $n$-pointed genus $g$ hyperelliptic curves is also known for $n\leq 2$ by work of Tommasi in~\cite{Tom}, and its $S_n$-equivariant Hodge-Euler characteristic for $n\leq 7$ has been computed by Bergstr\"om in~\cite{Ber}. 
Furthermore, the rational Chow ring of $H_{g,n}$ has been completely determined for $n\leq 2g+6$ in~\cite{CL_hyp} by Canning and Larson.

In recent years, there have been several works exploring connections between the topology of algebraic moduli spaces and their tropical counterparts. Such connections have been first explained for the moduli space of curves by Abramovich, Caporaso and Payne in~\cite{ACP}, where the authors prove that the dual boundary complex of the Deligne-Mumford compactification of $M_g$ is isomorphic to the moduli space of tropical curves $M_g^{\rm trop}$. Exploring this relation and applying Deligne's theory of weights, Chan, Galatius and Payne in~\cite{CGP} were able to prove breakthrough results for the cohomology of $M_g$ by studying a complex of graphs to compute the homology of $M_g^{\rm trop}$.

In the same spirit, Brandt, Chan, and Kannan have given in~\cite{BCK} a formula for the $S_n$-equivariant weight zero compactly supported Euler characteristic of $H_{g,n}$. This has been done by studying a graph complex defined by the dual complex of the boundary of the normal crossing compactification of $H_{g,n}$ via pointed admissible $\mathbb Z/2\mathbb Z$-covers, interpreted as a tropical moduli space.

The next natural case to consider is the case $d=3$: it is well known that the moduli space $T_g=M_{g,3}^1\setminus H_g$ of genus $g$ trigonal curves is irreducible of dimension $2g+1$, for $g\geq 4$, or of dimension $6$ if $g=3$.
Currently, much less is known about the geometry of $T_g$; for instance, while it is known that $H_g$ has the rational cohomology of a point, the rational cohomology of $T_g$ is only known for $g\leq5$,~\cite{Loo93},\cite{Tom},\cite{Z22}, while its rational Chow ring and stable cohomology, which turns out to be tautological, have been computed in~\cite{CL} and~\cite{Z24}, respectively.

In~\cite[Theorem 1.1]{CGP} it has been proved that the top-weight cohomology of $M_5$ does not vanish. From the description of the rational cohomology of $T_5$ computed in~\cite{Z22} and the spectral sequence determined by the inclusion $M_{5,3}^1\subseteq M_5$, one can deduce that the top-weight cohomology of $M_5$ is a non-zero multiple of that of $T_5$.
We wonder if this phenomenon generalizes to higher genera, and in order to do so it is natural to understand the boundary of $M_{g,3}^1,$ as a tropical moduli space.

In algebraic geometry, the moduli of stable trigonal curves $\overline{T}_g$ is the moduli space parameterizing stable trigonal curves, i.e., stable curves which are the stabilization of a nodal curve, which admits a degree $3$ admissible cover to a rational curve.

In this work, we study the analog of the boundary of $\overline{T}_g$ in $\overline{M}_g$, in the moduli space $M_g^{\operatorname{trop}}$.
For combinatorial graphs, 
the divisor theory has been introduced in~\cite{BN07} and shown to have remarkable similarities to divisor theory on curves; namely, there is a well-defined notion of rank for which the Brill-Noether theorem holds.
Likewise, there is a notion of harmonic morphism of graphs, introduced by the same authors in~\cite{BN}.

Based on these two notions, there have been introduced (at least) two different definitions of gonality for graphs, namely \emph{geometric gonality} and \emph{divisorial gonality}.
A graph is geometrically $d$-gonal if it admits a non-degenerate harmonic morphism of degree $d$ to a tree, and it is divisorially $d$-gonal if there exists a divisor on the graph of degree $d$ and rank $1$,
see~\cite{LC} for the precise definitions. 
However, unlike the algebraic geometric case, the equivalence between the two definitions is known to hold only for $d=2$,~\cite{MC}, or $d=3,$~\cite{ADMYY}, in the $3$-edge connected case.

Notice that unlike the algebraic case, when defining a (divisorally) $d$-gonal graph, it might also be $(d-1)$-gonal. This is because we want to describe the boundary of the closure of the locus of algebraic $d$-gonal curves, which contains curves of lower gonality. However, we will also see that $3$-edge connectivity provides a lower bound on the gonality of graphs, hence this difference will not play a role when studying trigonality for $3$-edge connected graphs.

Both the divisor theory and the notion of harmonic morphism have been generalized to metric graphs.
For degree $2$, Melody Chan showed in~\cite{MC} that both notions still coincide for metric graphs; more precisely, a metric graph admits a divisor of degree~$2$ and rank $1$ if and only if it admits a non-degenerate harmonic morphism of degree~$2$ to a metric tree; such metric graphs are therefore called hyperelliptic.
However, it is not difficult to find examples that show that this is not true for degree $3$, see Example~\ref{not_trigonal}.

In the present work, we consider analogous definitions of gonality for metric graphs and we give a precise relation between the two for the case $d=3$. More precisely, we show
that the result for hyperelliptic curves proved in~\cite{MC} generalizes to $d=3$ for $3$-edge connected metric graphs, if we allow tropical modifications:

\begin{theo}\label{th:main}
    Let $\Gamma$ be a $3$-edge connected metric graph with canonical loopless model $(G_{-},l_{-})$. The following are equivalent.
    \begin{itemize}
        \item[A.] $|V(G_{-})|=2,3$ or there exists a non-degenerate harmonic morphism of degree $3$ from a tropical modification of $\Gamma$ to a metric tree.
        \item[B.] $\Gamma$ is divisorially trigonal.
    \end{itemize}
\end{theo}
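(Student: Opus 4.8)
The plan is to prove the two implications separately, handling the exceptional cases $|V(G_{-})|\in\{2,3\}$ by direct computation and reducing the generic case to the combinatorial equivalence established in \cite{ADMYY}. For the implication A $\Rightarrow$ B I would split along the disjunction in condition A. When $|V(G_{-})|=2$ the canonical model is a banana graph, which carries a degree-$2$ rank-$1$ divisor (it is hyperelliptic in the sense of \cite{MC}); since adding any effective divisor cannot decrease the Baker--Norine rank, adjoining a point yields a degree-$3$ rank-$1$ divisor. The finitely many shapes of $3$-edge connected graphs on three vertices can be treated by an explicit Dhar-burning / chip-firing computation exhibiting an effective degree-$3$ divisor that stays effective after removing any single point. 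When instead a non-degenerate harmonic morphism $\phi\colon\tilde\Gamma\to T$ of degree $3$ to a metric tree is given, I would pull back: each fiber $\phi^{-1}(x)$ is effective of degree $3$, and because $T$ is contractible all fibers are linearly equivalent (sliding $x$ along $T$ realizes the equivalence by chip-firing), so $\phi^{-1}(x)$ has rank $\geq 1$. Finally I would descend this $g^1_3$ from the tropical modification $\tilde\Gamma$ back to $\Gamma$ using the invariance of the Baker--Norine rank under tropical modification.

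For the converse B $\Rightarrow$ A, I would assume $\Gamma$ is divisorially trigonal. If $|V(G_{-})|\in\{2,3\}$ the first clause of A already holds and there is nothing further to prove, so I may assume $|V(G_{-})|\geq 4$ and must then produce the harmonic morphism. Starting from a degree-$3$ rank-$1$ divisor $D$, the first step is to pass to a combinatorial model $(G,l)$ refining $(G_{-},l_{-})$ in which $D$ is supported on vertices and which is still $3$-edge connected. The underlying graph $G$ is then divisorially $3$-gonal in the combinatorial sense, so the equivalence of \cite{ADMYY} for $3$-edge connected graphs supplies a non-degenerate combinatorial harmonic morphism $\psi\colon G\to T_{0}$ of degree $3$ onto a tree.

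The remaining, and main, difficulty is to upgrade $\psi$ to a morphism of \emph{metric} graphs. A metric harmonic morphism that is linear with dilation factor $m_{e}$ on an edge $e$ must satisfy $m_{e}\,l(e)=l_{T}(\psi(e))$ simultaneously for all edges $e$ lying over a fixed edge of $T_0$, and the lengths $l$ inherited from $\Gamma$ will in general violate this compatibility; this is exactly the obstruction behind Example \ref{not_trigonal}. The hard part of the argument is to remove this obstruction by a tropical modification: along each edge where the dilation-adjusted lengths fail to agree, I would attach a suitable tree (subdividing and adding legs as needed) so that the prescribed dilation factors can be realized isometrically while the retraction back to $\Gamma$ is left unchanged, and choose the lengths of the target metric tree accordingly. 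The bulk of the technical effort lies in verifying that after all such modifications the morphism is still harmonic (the balancing condition must survive at every new and old vertex), still has global degree $3$, and remains non-degenerate; I expect the $3$-edge connectivity together with the hypothesis $|V(G_{-})|\geq 4$ to be precisely what guarantees that \cite{ADMYY} delivers a morphism rigid enough to be metrized in this way.
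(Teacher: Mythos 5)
Your argument for A $\Rightarrow$ B is essentially the paper's: explicit divisors for $|V(G_{-})|\in\{2,3\}$, and otherwise the pullback of a point of the tree, using that $\operatorname{Jac}$ of a tree is trivial and that rank is invariant under tropical modification (Lemma \ref{lm:Lemma_preim}, Remark \ref{rk:tropmod}). One small omission: to get $\operatorname{rk}(\varphi^*(t))\geq 1$ you must also handle points $w$ lying on an edge \emph{contracted} by $\varphi$, where $m_{\varphi}(w)=0$ and the fiber through $\varphi(w)$ does not contain $w$; the paper repairs this by moving chips from the endpoints of the contracted edge onto $w$. This is minor.

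The direction B $\Rightarrow$ A has a genuine gap, located exactly where you defer to ``the bulk of the technical effort.'' First, the reduction to \cite{ADMYY} does not go through: once you refine $(G_{-},l_{-})$ so that $D$ is vertex-supported, the refined graph acquires valence-$2$ vertices and is no longer $3$-edge connected as a combinatorial graph, so the hypothesis of \cite{ADMYY} fails; moreover, the rank of a vertex-supported divisor is not preserved when the given lengths are replaced by unit lengths, so ``$G$ is divisorially $3$-gonal in the combinatorial sense'' does not follow from $W^1_3(\Gamma)\neq\emptyset$ (divisorial trigonality genuinely depends on the length function, as Section \ref{sc:moduli} makes clear: the trigonal locus $\tau_G\subseteq\sigma_G=\mathbb R_{>0}^{|E(G)|}$ is cut out by nontrivial length equalities and is a proper subcone for $g\geq 5$). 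Second, and more seriously, your proposed repair of the metric incompatibility cannot work: attaching trees to $\Gamma$ does not change the lengths of the existing edges, so the violated relations $\mu_{\varphi}(e)\,l(e)=l_T(\psi(e))$ stay violated. Example \ref{not_trigonal} is precisely a case where the combinatorial morphism to $K_2$ admits no compatible metric and the correct metric morphism uses a \emph{different} tree and a different assignment of subdivided edges to fibers. The paper avoids \cite{ADMYY} entirely and builds the tree and the morphism directly from the divisor: the fibers over vertices of $T_D$ are the admissible representatives $D_x=x+x_1+x_2$, shown to be unique and pairwise disjoint (Lemma \ref{lm:disj_rep}), and consecutive representatives are shown via Dhar's algorithm to be separated by a $3$-edge cut whose three edges have \emph{equal length} (Lemmas \ref{lm:existence} and \ref{lm:3_cut}); this equality is what makes the metric compatibility automatic. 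Tropical modification is only genuinely needed for loops, where a leaf is attached at a point dictated by $D$ (Lemma \ref{lm:mult_loop}). Your proposal is missing this central mechanism.
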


In the special case of loopless $3$-edge connected metric graphs (i.e. there is no loop in any model of the graph), the equivalence in Theorem~\ref{th:main} is actually stronger as we show that $\Gamma$ itself admits a non-degenerate harmonic morphism of degree $3$ from $\Gamma$ to a tree (see Theorem~\ref{th:Main_Theo}).
In general, whenever $\Gamma$ has loops we might need to perform tropical modifications at $\Gamma$ by inserting trees at suitable vertices in order to guarantee the existence of the morphism: the case of $3-$edge connected graphs with loops is considered in Theorem~\ref{th:Main_Theo_loops}.

Even though the restriction to $3$-edge connected graphs is natural, one may ask if Theorem~\ref{th:main} holds if we remove this connectivity condition. The answer is no in general: see for instance~\cite[Example 5.13]{ABBR}, where the authors exhibit an example of a divisorially trigonal $2$-edge connected metric graph, which admits no non-degenerate harmonic morphism of degree $3$ to a tree, even if we allow tropical modifications.
However, the theorem can be extended in many cases, even though the situation can be combinatorially much more complicated. For instance, some metric graphs may be hyperelliptic or contain hyperelliptic subgraphs which might be used to perform tropical modifications of the curves which admit a harmonic morphism of degree $3$ to a metric tree.
We will give a generalization of Theorem~\ref{th:main} with no restriction on the edge connectivity of the graph in a follow-up to this paper.

On the positive side, for any $d\in\mathbb Z_{>0}$ and any metric graph $\Gamma$ (with no assumptions on its edge-connectivity), the existence of a non-degenerate harmonic morphism of degree $d$ from $\Gamma$ (or a tropical modification of it) to a tree always determines a divisor on $\Gamma$ of degree $d$ and rank at least $1$, see Lemma~\ref{lm:Lemma_preim}.

Notice that we can think of the statement of Theorem~\ref{th:main} as a tropical analog of the description of the stable trigonal locus $\overline T_g\subset \overline{M}_g$. Indeed, in Harris-Mumford's theory of admissible covers, it is allowed to modify the original curve by adding rational tails (see~\cite{HM82}): the tropical modifications we perform can be interpreted as tropical analogs of those modifications.

Based on the correspondence described in Theorem~\ref{th:main} we construct the moduli spaces $H_{g,3}^{\operatorname{trop},(3)}$ and $T_{g}^{\operatorname{trop},(3)}$, of $3$-edge connected tropical trigonal covers and $3$-edge connected tropical trigonal curves, respectively, as generalized cone complexes. 
In particular, the moduli space $T_g^{\operatorname{trop},(3)}$ has pure dimension $2g+1$ for $g\geq 4$, and $6$ for $g=3$, as expected in analogy with the classical algebraic case. 
In order to prove that $T_g^{\operatorname{trop},(3)}$ has the expected dimension, we give an explicit construction of its maximal cells, called \emph{$3$-ladders} and prove that $T_g^{\operatorname{trop},(3)}$ is also connected through codimension~$1$.

Finally, as a consequence of our construction, we will also show that $3$-ladders, together with a suitable morphism, are in fact tropical admissible covers, as defined by~\cite{CMR}.

Regarding the connection with the situation of algebraic curves, the hope would be to relate $T_g^{\operatorname{trop},(3)}$ with the boundary of $M_{g,3}^1$ in $\overline{M}_{g,3}^1$ and use it to understand part of the topology of $T_g.$ 
Notice that the homology of the trigonal locus is expected to coincide with its restriction to $3$-edge connected graphs due to the presence of separating vertices, or multiple edges.
Indeed, as shown in~\cite{ACP22}, the restriction of the (link of the) moduli space of tropical curves to the locus of curves with bridges, cut vertices, loops, weights or multiple edges is contractible. Therefore, studying the topology of $T_g^{\operatorname{trop},(3)}$ should be enough to obtain results on the topology of $T_g$ itself. 
This would also be significant in understanding the cohomology of $M_g.$

In Section~\ref{sc: preliminaries}, we recall the definitions and main properties of harmonic morphisms of graphs and metric graphs, along with tropical modifications of these. Moreover, we also recall the divisor theory on metric graphs and discuss possible definitions of $d$-gonality for metric graphs. For $d=2$, we recall the results obtained by Melody Chan in~\cite{MC} and we observe similarities and differences between the cases $d=2$ and $d=3$.

In Section~\ref{sc:3_connected}, we prove Theorem~\ref{th:main} first in the case of loopless graphs and then in the case with loops.

Finally, in Section~\ref{sc:moduli} we construct the moduli spaces of $3$-edge connected tropical trigonal covers and $3$-edge connected tropical trigonal curves. For the latter, we will also provide a construction for the maximal cells which will allow us to compute the dimension of the moduli space, which coincides with that of the moduli space of algebraic trigonal curves. 

\medskip

\noindent 
{\bf Acknowledgements.} It is our pleasure to thank Lucia Caporaso for her insights and enlightening discussions at an early stage of the paper. We also thank Melody Chan, Hannah Markwig and Felix R\"ohrle for helpful comments. We are grateful to Matthew Baker and Erwan Brugall\'e for informing us of Luo's example in~\cite[Example 5.13]{ABBR}, which helped us to correct a mistake in the statement of our main theorem in a previous version of this paper. Finally, we thank the anonymous referees for their valuable comments and suggestions.
\section{Preliminaries}\label{sc: preliminaries}
Let us recall the definitions of the main objects we will discuss. We will mostly follow~\cite{MC}.

\subsection{Graphs}
Given a graph $G$, we denote by $V(G),E(G)$ the sets of vertices and edges, respectively, and by $E_v(G)$ the set of edges incident to a given vertex $v\in V(G)$. Given an edge $e\in E(G)$, we will abuse notation and write $e=uv$ to indicate that $u$ and $v$ are the endpoints of $e$. The valence of a vertex $v\in V(G)$, $\val(v),$ is the cardinality of $E_v(G)$, with loops counted twice. Our graphs are assumed to be finite and connected.

A \textbf{weighted graph} is a pair $(G,w)$, where $G$ is a graph and $w$ is a weight function $w:V(G)\to \mathbb Z_{\geq 0}$. Given a graph $G$, we will often consider $G$ to be a weighted graph by endowing $G$ with the trivial weight function $\underline 0$.
Given a (weighted) graph $G=(G,w)$, the \textbf{genus} of $G$ is set to be
$$g(G):=\sum_{v\in V(G)}w(v)+b_1(G)=g,$$
where $b_1(G)=|E(G)|-|V(G)|+1$ is the first Betti number of $G$.

Given an edge $e$ in a weighted graph $G=(G,w)$, the weighted contraction of $e$ is the weighted graph $G/e=(G/e, w/e)$ such that:
\begin{itemize}
\item the edge $e$, together with its endpoints $u,v\in V(G)$, is identified with a vertex $v_e\in V(G/e)$ of weight $w(u)+w(v)$ if $u$ and $v$ are different, and $w(u)+1$ if $u=v$ (i.e., if $e$ is a loop-edge).
\item $G$ and $G/e$ are identified outside of $e$.
\end{itemize}
It follows immediately from the definition that a weighted contraction preserves the genus.
Given a set $S=\{e_1,\dots, e_n\}\subset E(G)$, the contraction of $S$ is obtained by composing the single edge contractions of $e_1,\dots,e_n$.

A graph $G$ is said to be \textbf{stable} if   $\forall v\in V(G)$ with $w(v)=0$, we have that $\val(v)\geq 3$ and if $w(v)=1$, we have that $\val(v)\geq 1$. It is easy to see that weighted contractions preserve stability.

Given a graph $G$, the stable model of $G$ is the graph $G^{\operatorname{st}}$ obtained by contracting all leaves and then, for any vertex of weight $0$ and valence $2$, contracting one of the two adjacent edges.

We denote by $G_{-}$ the loopless model of $G,$ obtained by adding in $G^{\operatorname{st}}$ a vertex in the interior of each loop. Notice that if $G^{\operatorname{st}}$ has loops, then $G_{-}$ is not stable. 

Finally, to any (weighted) graph $(G,w)$ we can associate a weightless graph $(G^w,\underline{0})$ obtained by attaching loops at each vertex $v\in V(G)$. The weighted contraction of all added loops in $G^w$ yields again $(G,w)$.

\subsection{Metric graphs}

\begin{defin}
    A \textbf{metric graph} $\Gamma$ is a metric space such that there exists a graph~$G$ and a length function $l:E(G)\to\mathbb R_{>0}$ so that $\Gamma$ is obtained by gluing intervals $\left[0,l(e)\right]$ for any $e\in E(G)$ at their endpoints, as prescribed by the combinatorial data of $G$. We write $\Gamma=(G,l)$ and we say that $(G,l)$ is a model for $\Gamma$.

    The \textbf{valence} $\operatorname{val}(x)$ of a point $x\in \Gamma$ is the number of connected components in~$U_x\setminus \{x\},$ where $U_x$ is a small neighborhood of $x$. In particular, notice that all but finitely many points $x\in \Gamma$ have valence $2$.
    The \textbf{distance} $d(x,y)$ between two points $x,y\in \Gamma$ is the length of the shortest path between them.
\end{defin}

\begin{defin}
    The \textbf{canonical model} $\Gamma_0=(G_0,l_0)$ of a metric graph $\Gamma=(G,l)$ with $g(G)\geq2$ is the unique model obtained by iteratively removing all vertices in $V(G)$ of valence 1 and their adjacent edges and all vertices of valence $2$ and replacing their adjacent edges $e_1,e_2$ by an edge $e$ of length $l_0(e)=l(e_1)+l(e_2).$  
\end{defin}

Notice that the canonical model $\Gamma_0=(G_0,l_0)$ of $\Gamma=(G,l)$ is such that $G_0=G^{\operatorname{st}}.$ Moreover, if $G$ has leaves, then $\Gamma_0\subsetneq\Gamma.$ 

\begin{defin}
    Let $G$ be any graph. A \textbf{refinement} $G'$ of $G$ is a graph obtained from $G$ by inserting vertices in the interior of some edges. 
    We say that $(G',l')$ is a \textbf{refinement} of $(G,l)$ if $G'$ is a refinement of $G$ and, for any $e\in E(G),$ $l(e)=\sum_{i}l'(e_i),$ where $\{e_i\}$ are the edges in $G'$ whose union gives $e.$
    Observe that $(G',l')$ and $(G,l)$ coincide as metric spaces.
    
    The \textbf{canonical loopless model} $(G_{-},l_{-})$ of a metric graph $(G,l)$ is a refinement of the canonical model $(G_0,l_0)$ where $G_{-}$ is the loopless model of $G_0$ and the vertices are added at the midpoint of each loop.
\end{defin}

\begin{defin}
    An \textbf{abstract tropical curve} is a triple $(G,w,l)$ where $(G,l)$ is a metric graph and $(G,w)$ is a stable graph.
\end{defin}

\begin{defin}
Two metric graphs are \textbf{tropically equivalent} if they have the same canonical model.
A \textbf{tropical modification} of a metric graph $\Gamma$ is a metric graph $\Gamma'$ tropically equivalent to $\Gamma,$ obtained by gluing trees at some points. In particular, $\Gamma$ is a topological retraction of any tropical modification.
\end{defin}
\begin{rema}
    Notice that if $\Gamma'$ is a tropical modification of $\Gamma$, then both have the same canonical model. 
\end{rema}

\subsection{Harmonic morphisms of graphs and metric graphs}

\begin{defin}
A \textbf{morphism of graphs} $\varphi:G\rightarrow G'$ is a map of sets 
$$\varphi:V(G)\cup E(G)\rightarrow V(G')\cup E(G')$$ such that
\begin{enumerate}
    \item $\varphi(V(G))\subseteq V(G');$
    \item if $e=xy\in E(G)$ is such that $\varphi(e)\in V(G'),$ then $\varphi(e)=\varphi(x)=\varphi(y);$
    \item if $e=xy\in E(G)$ is such that $\varphi(e)\in E(G'),$ then $\varphi(e)=\varphi(x)\varphi(y).$
\end{enumerate}
An \textbf{indexed morphism} is a morphism $\varphi:G\rightarrow G'$ that assigns to each $e\in E(G)$ a non-negative integer $\mu_{\varphi}(e)$ with $\mu_{\varphi}(e)=0$ if and only if $\varphi(e)\in V(G').$

An indexed morphism is \textbf{harmonic}\footnote{This coincides with the definition of pseudo-harmonic morphism in~\cite[Definition 6(B)]{LC}.} if for any $x\in V(G)$ the quantity $$m_{\varphi}(x)=\sum_{\substack{e\in E_x(G)\\ \varphi(e)=e'}}\mu_{\varphi}(e)$$ 
is constant for any $e'\in E_{\varphi(x)}(G')$.
If the sum is taken over the empty set, we set $m_\varphi(x)=0.$ Moreover we say that the morphism is $\textbf{non-degenerate}$ if it contracts no loops and if $m_\varphi(x)\geq1$ for any $x\in V(G).$ 
The \textbf{degree} of a harmonic morphism $\varphi$ is 
$$\operatorname{deg}\varphi=\sum_{\substack{e\in E(G)\\ \varphi(e)=e'}}\mu_{\varphi}(e)$$
for any $e'\in E(G').$ Note that for a harmonic morphism the degree is independent of the choice of $e',$~\cite[Lemma 2.12]{U}.
\end{defin}

\begin{defin}\label{def_metric}
Let $\Gamma=(G,l)$ and $ \Gamma'=(G',l')$ be metric graphs.
Suppose that $\varphi$ is an indexed morphism of graphs with 
\begin{equation}\label{index}
\mu_{\varphi}(e)=\begin{cases}
\frac{l'(\varphi(e))}{l(e)}\in\mathbb{Z}, &\text{if }\varphi(e)\in E(G')\\
0,&\text{if }\varphi(e)\in V(G')
\end{cases}\end{equation}
for any $e\in E(G)$.
We say that $\varphi$ induces a \textbf{morphism of metric graphs}, as topological spaces, $$\tilde{\varphi}:(G,l)\to(G',l'),$$ which is continuous and linear along each edge $e$ mapping to an edge $e'$.  
Such a morphism of metric graphs is \textbf{non-degenerate}, resp. \textbf{harmonic}, if the morphism of graphs $\varphi$ is. We also define its degree $\operatorname{deg}\tilde{\varphi}$ to be equal to $\operatorname{deg}\varphi$ and the \textbf{horizontal multiplicity} at $x\in \Gamma$ as
\begin{equation}\label{hor_mult}
    m_{\tilde{\varphi}}(x)=\begin{cases}
    m_{\varphi}(x),&x\in V(G)\\
    0,&x\in\mathring{e}; \varphi(e)\in V(G')\\
    \mu_{\varphi}(e),&x\in\mathring{e}; \varphi(e)\in E(G').
    \end{cases}
\end{equation}
\end{defin}

\begin{rema}\label{rk:lengths}
Notice that the definition of harmonic morphism on metric graphs depends on the choice of a model.
Let us stress that given an indexed morphism of graphs $\varphi:G\to G'$ and a metric graph $\Gamma'=(G',l')$, we can always find a metric graph $\Gamma=(G,l)$ and a morphism $\tilde{\varphi}:(G,l)\to(G',l')$ such that if $\varphi$ is non-degenerate (resp. harmonic of degree $d$), then $\tilde{\varphi}$ is non-degenerate (resp. harmonic of degree $d$).
Indeed, from the definition, it follows that the length function $l:E(G)\rightarrow \mathbb{R}_{>0}$ is uniquely determined by the indices $\mu_{\varphi}(e)$ at the edges which are not contracted by $\varphi$.

However, if we fix a metric graph $\Gamma=(G,l)$ and an indexed morphism $\varphi:G\to G'$, it may be impossible to find a metric graph with underlying graph $G'$ with an induced morphism of metric graphs $\tilde{\varphi}:(G,l)\to (G',l'):$  
the relations~\eqref{index} between the indices of the morphism and the lengths of the models have to be satisfied. 
\end{rema}

\begin{example}
Here are some examples of morphisms of (metric) graphs. If not specified, all indices are $1$ (edges of the same color are sent to edges of the same color and they all have the same length).
\end{example}

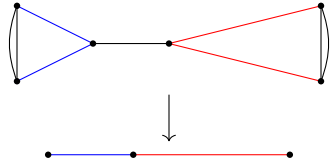
\begin{figure}[ht]
\centering
\begin{tikzcd}
    \begin{tikzpicture}
    \draw (0,0) --  (1, 0);
    \draw [blue] (-1,0.5) -- (0,0);
    \draw [blue] (-1,-0.5) -- (0,0);
    \draw [red] (3,0.5) -- (1,0);
    \draw [red] (3,-0.5) -- (1,0);
    \draw (-1,0.5)--(-1,-0.5);
    \draw (3,0.5)--(3,-0.5);
    \draw (-1,0.5) to [out=250, in=110](-1,-0.5);
    \draw (3,0.5) to [out=290, in=70](3,-0.5);
    \vertex{-1,0.5}
    \vertex{-1,-0.5}
    \vertex{3,0.5}
    \vertex{3,-0.5}
    \vertex{0,0}
    \vertex{1,0}
    \end{tikzpicture}
    \arrow[d] \\
    \begin{tikzpicture}
    \draw [blue] (-1.12,0) -- (0,0);
    \draw [red] (2.06,0) -- (0,0);
    \vertex{-1.12,0}
    \vertex{0,0}
    \vertex{2.06,0}
    \end{tikzpicture}
\end{tikzcd}
\caption{A non-degenerate non-harmonic morphism.}\label{figure1} 
\end{figure}

\begin{figure}[ht]
\centering
\begin{tikzcd}
    \begin{tikzpicture}
    \draw [blue] (0,0) -- (2,0);
    \draw [blue] (0,0.8) -- (2,0.8);
    \draw [blue] (0,1.6) -- (2,1.6);
    \draw (0,1.6) -- (0,0.8);
    \draw (0,0) -- (0,0.8);
    \draw (2,1.6) -- (2,0.8);
    \draw (2,0) -- (2,0.8);
    \draw (0,0) --(-0.5,0.8);
    \draw (0,0.8) --(-0.5,0.8);
    \draw (0,1.6) --(-0.5,0.8);
    \vertex{-0.5,0.8}
    \begin{scope}
		\clip (2,0) rectangle ++(0.5,1.6);
		\draw (2,0.8) ellipse (0.5 and 0.8);
	\end{scope}
    
    \foreach \i in {0,2} {
    \foreach \j in {0,0.8,1.6} {
    	    \vertex{\i, \j}
         }
    	}
    
    \end{tikzpicture}
    \arrow[d] \\
    \begin{tikzpicture}
    \draw [blue] (0,0) -- (2,0);
    \vertex{0,0}
    \vertex{2,0}
    \end{tikzpicture}
\end{tikzcd}
\caption{A degenerate and harmonic morphism of degree~$3$.}\label{figure2} 
\end{figure}

\begin{figure}[ht]
\centering
\begin{tikzcd}
    \begin{tikzpicture}
    \draw[line width=2*\edgewidth, blue](0,0) --  (1, 0);
    \draw (0.5,0) node[anchor=south] {$2$};
    \draw [blue] (-2,0.5) -- (0,0);
    \draw [blue] (-2,-0.5) -- (0,0);
    \draw [red] (3,0.5) -- (1,0);
    \draw [red] (3,-0.5) -- (1,0);
    \draw (-2,0.5)--(-2,-0.5);
    \draw (3,0.5)--(3,-0.5);
    \draw (-2,0.5) to [out=250, in=110](-2,-0.5);
    \draw (3,0.5) to [out=290, in=70](3,-0.5);
    \vertex{-2,0.5}
    \vertex{-2,-0.5}
    \vertex{3,0.5}
    \vertex{3,-0.5}
    \vertex{0,0}
    \vertex{1,0}
    \end{tikzpicture}
    \arrow[d] \\
    \begin{tikzpicture}
    \draw [blue] (-2,0) -- (0,0);
    \draw [blue] (0,0) -- (2,0);
    \draw [red] (2,0) -- (4.06,0);
    \vertex{0,0}
    \vertex{2,0}
    \vertex{-2,0}
    \vertex{4.06,0}
    \end{tikzpicture}
\end{tikzcd}\caption{A non-degenerate harmonic morphism of degree $2.$}\label{figure3}
\end{figure}

\begin{defin}
    Let $\varphi:(G,l)\rightarrow (G',l')$ be a harmonic morphism of metric graphs. A~\textbf{tropical modification} of $\varphi$ is a harmonic morphism of metric graphs $\tilde \varphi:(\tilde G,\tilde l)\rightarrow (\tilde G',\tilde l')$ such that $(\tilde G,\tilde l),$ resp. $(\tilde G',\tilde l')$, is a tropical modification of $(G,l)$, resp. $(G',l')$, and the following diagram is commutative.
\begin{center}
\begin{tikzcd}
(G,l) \arrow[leftarrow]{r}  \arrow[d,"\varphi"] &(\tilde G,\tilde l)\arrow[d,"\tilde \varphi"]\\
(G',l')\arrow[leftarrow]{r}  &(\tilde G',\tilde l')
\end{tikzcd}
\end{center} 
Notice that the horizontal arrows in the diagram induce topological retractions of the associated metric spaces.
\end{defin}

\begin{prop}\label{prp: harm_contractions}
Given a non-degenerate harmonic morphism of metric graphs $\varphi:(G,l)\rightarrow (G',l')$ of degree $d\geq 2$, there is a tropical modification $\tilde{\varphi}:(\tilde{G},\tilde{l})\rightarrow (\tilde{G}',\tilde{l}')$ which is non-degenerate, harmonic of the same degree but with no contractions. 
\end{prop}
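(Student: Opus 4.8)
The plan is to eliminate contractions one at a time, by induction on $\#\{e\in E(G):\mu_\varphi(e)=0\}$, each step being a local tropical modification supported over a single vertex of the target. Note first that a contracted edge $e=xy$ satisfies $\varphi(x)=\varphi(y)=:v'$, so all contracted edges lie inside the fibres over vertices of $G'$. Fix a contracted edge $e=xy$ lying over $v'$ and let $z_1,\dots,z_k$ be the vertices of $\varphi^{-1}(v')$, with horizontal multiplicities $m_i:=m_\varphi(z_i)$. Non-degeneracy gives $m_i\ge 1$, and choosing any non-loop edge of $G'$ at $v'$ and grouping the edges of $G$ above it by their endpoint over $v'$ shows $\sum_i m_i=\deg\varphi=d$.

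Assume first that $x\ne y$. I would attach a new tail $f'=v'u'$ at $v'$, which is a tropical modification of $(G',l')$, subdivide $e$ at its midpoint $m$, and send both halves $xm$ and $my$ onto $f'$ with index $1$, so that $m\mapsto u'$; this un-contracts $e$. The essential point is that introducing the new direction $f'$ at $v'$ is harmonic only if it is matched over the whole fibre: harmonicity at $z_i$ forces the edges at $z_i$ mapping to $f'$ to have total index exactly $m_i$. I therefore attach, at each $z_i$, one further tail ending over $u'$ whose index makes the total index toward $f'$ equal to $m_i$ --- index $m_x-1$ at $x$ and $m_y-1$ at $y$ (dropping the tail when this is $0$, since the half-edge already contributes $1$), and index $m_i$ at every other $z_i$. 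These are tropical modifications of $(G,l)$; by Remark \ref{rk:lengths} the length of $f'$ and of the new source tails can be chosen so that relations \eqref{index} hold. One checks that $\tilde\varphi$ is harmonic (the $f'$-sum at each $z_i$ equals $m_i$, the common value of the index-sums over the original edges at $v'$), non-degenerate (every new vertex has positive horizontal multiplicity: $2$ at $m$ and $\ge 1$ at the new leaves), and of degree $d$ (the total index over $f'$ is $\sum_i m_i=d$); moreover it has one fewer contracted edge, and since we only subdivided an edge and glued trees, the genus is preserved and the retractions to $(G,l)$ and $(G',l')$ commute with $\varphi$ and $\tilde\varphi$. Iterating removes every contraction joining distinct vertices.

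The heart of the argument --- and the step I expect to require the most care --- is precisely this global balancing: a tail cannot be attached at $v'$ in isolation, because harmonicity must hold at every vertex above $v'$ at once, and it is exactly the balancing tails of total index $m_i$ that simultaneously restore harmonicity and pin the degree over $f'$ to $\sum_i m_i=d$. The genuinely delicate case is a contracted loop $e=xx$: since an edge with coincident endpoints can only map to an edge with coincident endpoints, a loop can never be un-contracted, and after any subdivision its two half-edges remain based at $x$ and contribute at least $2$ to the $f'$-sum there, which cannot be reconciled with harmonicity when $m_\varphi(x)=1$. This is why the result is to be applied to models whose contracted locus is loopless: one first passes to such a model, after which every contracted edge joins distinct vertices and the fold-and-balance move above applies verbatim.
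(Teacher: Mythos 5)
Your fold-and-balance move for a contracted edge $e=xy$ with $x\neq y$ is essentially the paper's own proof: the paper subdivides $e$ at its midpoint, glues a single leaf at $v'$ in the target, and glues $m_\varphi(z)$ leaves of index $1$ at each fibre vertex $z\notin\{x,y\}$ and $m_\varphi(z)-1$ such leaves at $x$ and at $y$, all mapped onto the new leaf; replacing the $m_\varphi(z)$ unit-index leaves by one tail of index $m_\varphi(z)$ is an immaterial variant, and your verification of harmonicity, non-degeneracy and degree is the same computation. Up to that point the proposal is fine.

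The final paragraph, however, contains a genuine gap. Passing to a loopless model does not reduce the loop case to the previous one: if a loop at $x$ contracted to $v'$ is subdivided at a point $m$, both resulting edges at $m$ are still contracted, so $m_\varphi(m)=0$; the refined morphism is then degenerate at $m$, your balancing tail there would need index $m_\varphi(m)-1=-1$, and harmonicity at $m$ toward the new tail would force a sum of $0$ while the half-edge already contributes $1$. So the move does not apply ``verbatim'' after this reduction. Worse, the obstruction you correctly isolate cannot be removed by any tropical modification: take degree $3$ over a single edge $v'w'$, with fibre $\{x,z\}$ over $v'$, one edge from $x$ and two from $z$ to the vertex over $w'$, and a loop at $x$, so that $m_\varphi(x)=1$. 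Any tropical modification must send the loop into the tree glued at $v'$; harmonicity fails at every interior point of the loop lying over $v'$ (nothing there can map toward $v'w'$, while the loop germs have positive index toward a tree edge), so $x$ is the only point of the loop over $v'$, both germs of the loop at $x$ enter the same tree edge $f'$, and the $f'$-sum at $x$ is at least $2>m_\varphi(x)$. Hence the statement genuinely requires either that no contracted edge is a loop (which holds in all the paper's applications) or that $m_\varphi(x)\geq 2$ at any vertex carrying a contracted loop, in which case one folds the two halves onto the new tail and balances with $m_\varphi(x)-2$ further tails at $x$. To be fair, the paper's own proof leaves exactly the same case open, since it silently treats $e=uv$ with $u\neq v$; but your claim that the loopless reduction settles it is not correct.
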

\begin{proof}
    Let $e=uv\in E(G)$ be such that $\varphi(e)=v'\in V(G').$
    By non-degeneracy, $m_{\varphi}(x)\geq 1$ for any $x\in V(G).$ Define the graph $(G_e,l_e)$ by adding a vertex $w$ in the mid-point of $e$ and by adding 
    $m_{\varphi}(x)$ leaves of length $l(e)/2$ at any $x\in (\varphi^{-1}(v')\cap V(G))\setminus \{u,v\}$ and $m_{\varphi}(x)-1$ leaves of the same length at $x=u$ and $x=v.$ 
    Define $(G'_e,l_e')$ by adding only one leaf incident to $v'$, again with the same length. 
    Finally define $\varphi_e:({G}_e,l_e)\rightarrow({G}'_e,l'_e)$ as the morphism which coincides with $\varphi$ over $(V(G)\setminus\{u,v\})\cup E(G)\setminus\{e\}$ sending $w$ and all the added vertices of valence $1$ to the endpoint of the leaf added in $G_e'$, and all the adjacent edges and the added leaves in $G_e$ to the added leaf in $G_e'$. By construction, for any $x\in (\varphi_e^{-1}(v')\cap V(G))\setminus \{u,v\}$, the quantity $m_{\varphi_e}(x)=m_{{\varphi}}(x)$ is equal to the number of added leaves at $x,$ therefore it is constant for any $e'\in E(G_e').$
    For $x\in\{u,v\},$ the number of edges sent to the leaf is again $m_{{\varphi}}(x):$ we added $m_{\varphi}(x)-1$ edges plus the edge whose other endpoint is $w.$
    
    Thus by construction, the morphism ${\varphi}_e$ is again harmonic. No contraction has been added, therefore the resulting morphism is also non-degenerate. Repeating the construction for any $e\in G$ such that $\varphi(e)\in V(G')$ yields metric graphs  $(\tilde{G},\tilde{l}), (\tilde{G}',\tilde{l}')$ and a morphism $\tilde\varphi:(\tilde{G},\tilde{l})\rightarrow (\tilde{G}',\tilde{l}')$ which is non-degenerate, harmonic with the same degree of $\varphi$ and with no contractions.
\end{proof}

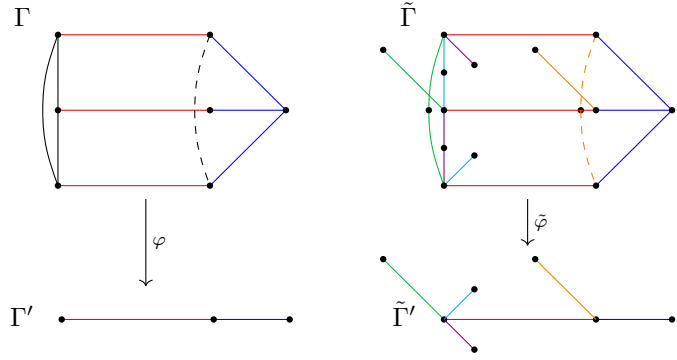
\begin{figure}[ht]
\begin{tikzcd}
\begin{tikzpicture}
\draw (-0.2,1.2) node[anchor=east] {$\Gamma$};
    \draw[red](0,0)--(2,0);
    \draw[red](0,1)--(2,1);
    \draw[red](0,-1)--(2,-1);
    \draw[blue](2,0)--(3,0);
    \draw[blue](2,1)--(3,0);
    \draw[blue](2,-1)--(3,0);
    \draw(0,1) to [out=250, in=90] (-0.2,0);
    \draw(-0.2,0) to [out=270, in=110] (0,-1);
    \draw(2,1)[dashed] to [out=250, in=90] (1.8,0);
    \draw(1.8,0)[dashed] to [out=270, in=110] (2,-1);
    \draw(0,1)--(0,0);\draw(0,-1)--(0,0);
    \foreach \i in {0,2} {
    \foreach\j in {-1,0,1}{
    \vertex{\i, \j}}
    }  
    \vertex{3,0}
    \end{tikzpicture}\arrow[d,"\varphi"] &\begin{tikzpicture}
\draw (-0.2,1.2) node[anchor=east] {$\tilde \Gamma$};
    \vertex{1.8,0}
    \draw[red](0,0)--(2,0);
    \draw[red](0,1)--(2,1);
    \draw[red](0,-1)--(2,-1);
    \draw(2,1)[dashed,orange] to [out=250, in=90] (1.8,0);
    \draw(1.8,0)[dashed,orange] to [out=270, in=110] (2,-1);
    \draw[orange](2,0)--(1.2,0.8);\vertex{1.2,0.8}
    \draw[blue](2,0)--(3,0);
    \draw[blue](2,1)--(3,0);
    \draw[blue](2,-1)--(3,0);
    \draw(0,1)[verde] to [out=250, in=90] (-0.2,0);
    \draw(-0.2,0)[verde] to [out=270, in=110] (0,-1);
    \draw[verde](0,0)--(-0.8,0.8);
    \draw[cyan](0,1)--(0,0.5);
    \draw[cyan](0,0)--(0,0.5);\vertex{0,0.5}
    \draw[cyan](0,-1)--(0.4,-0.6);\vertex{0.4,-0.6}
    \draw[violet](0,-1)--(0,-0.5);
    \draw[violet](0,0)--(0,-0.5);\vertex{0,-0.5}
    \draw[violet](0,1)--(0.4,0.6);\vertex{0.4,0.6}
    \vertex{-0.2,0}
    \foreach \i in {0,2} {
    \foreach\j in {-1,0,1}{
    \vertex{\i, \j}}
    }  
    \vertex{3,0}\vertex{-0.8,0.8}
    \end{tikzpicture}\arrow[d,"\tilde\varphi"]\\
    \begin{tikzpicture}
    \draw (-0.2,0) node[anchor=east] {$\Gamma'$};
    \draw[red](0,0) -- (2, 0);
    \draw[blue](2,0) -- (3, 0);
    \foreach \i in {0,2,3} {
    	    \vertex{\i, 0}
    	    }  
    \end{tikzpicture}&\begin{tikzpicture}
    \draw (-0.2,0) node[anchor=east] {$\tilde \Gamma'$};
    \draw[red](0,0) -- (2, 0);
    \draw[blue](2,0) -- (3, 0);
    \draw[verde](0,0)--(-0.8,0.8);
    \foreach \i in {0,2,3} {
    	    \vertex{\i, 0}
    	    }  
    \vertex{-0.8,0.8}
    \draw[orange](2,0)--(1.2,0.8);\vertex{1.2,0.8}
    \draw[cyan](0,0)--(0.4,0.4);\vertex{0.4,0.4}
    \draw[violet](0,0)--(0.4,-0.4);\vertex{0.4,-0.4}
    \end{tikzpicture}\end{tikzcd}
    \caption{A non-degenerate harmonic morphism of degree $3$ and a tropical modification without edge contractions.}\label{fg:harm_morph}
\end{figure}

\subsection{Divisor theory on metric graphs}

\begin{defin}
Let $\Gamma$ be a metric graph. A \textbf{divisor} of $\Gamma$ is a formal sum
$$D:=\sum_{x\in \Gamma}D(x)x,$$
where $D(x)\in\mathbb{Z}$ and $D(x)\neq0$ for finitely many $x\in\Gamma.$ 
The \textbf{support} of $D$ is the set $\operatorname{Supp}(D)=\{x\in\Gamma|\,D(x)\neq0\}$.

We denote by $\operatorname{Div}(\Gamma)$ the group of all divisors in $\Gamma.$

The \textbf{degree} of $D$ is $\operatorname{deg}(D)=\sum_{x\in \Gamma}D(x).$ The divisor group decomposes as 
$$\operatorname{Div}(\Gamma)=\cup_{d\in \mathbb Z} \operatorname{Div}^d(\Gamma),$$
where $\operatorname{Div}^d(\Gamma)$ stands for divisors of degree $d$.

We say that $D$ is \textbf{effective}, and write $D\geq 0,$ if $D(x)\geq 0,$ for any $x\in\Gamma.$ 

A \textbf{rational function} on $\Gamma$ is a continuous and piecewise linear function $f:\Gamma\rightarrow\mathbb{R},$ with integer slopes along its domains of linearity.

A divisor $D$ is \textbf{principal} if $D=\operatorname{div}(f),$ for some rational function $f,$ where $\operatorname{div}(f)$ is the divisor with $\operatorname{div}(f)(x)$ equal to the sum of all slopes of $f$ along edges emanating from $x.$ Principal divisors have degree $0$ and form a subgroup $\operatorname{Prin}(\Gamma)$ of $\operatorname{Div}^0(\Gamma)$.

Two divisors are \textbf{linearly equivalent} if their difference is principal. In particular, linearly equivalent divisors have the same degree.

The \textbf{Jacobian} of $\Gamma$ is the quotient $\operatorname{Jac}(\Gamma)=\operatorname{Div}(\Gamma)/\operatorname{Prin}(\Gamma)$. 
It also has a decomposition
$$\operatorname{Jac}(\Gamma)=\cup_{d\in \mathbb Z} \operatorname{Jac}^d(\Gamma),$$
with $\operatorname{Jac}^d(\Gamma)=\operatorname{Div}^d(\Gamma)/\operatorname{Prin}(\Gamma).$

The \textbf{rank} of $D$ is set to be  $\operatorname{rk}(D)=-1$ if $D$ is not equivalent to an effective divisor; otherwise $$\operatorname{rk}(D)=\operatorname{max}\{k\in\mathbb{Z}_{\geq 0}:\forall E\geq 0\text{ with }\operatorname{deg}(E)=k, \exists\,E'\geq 0 \text { such that } D-E\sim E'\}.$$

Finally, we define the $(r,d)$ \textbf{Brill-Noether locus} of $\Gamma$:
$$W_d^r(\Gamma)=\{\left[D\right]\in \operatorname{Jac}^d(\Gamma)|\,\operatorname{rk}(D)\geq r\}.$$ 
\end{defin}

For simplicity, we will refer to $D\in W_d^r(\Gamma)$, with $r\geq 0$, to be always effective.  

\begin{rema}\label{rk:Dhar}
In the following, we will also make extensive use of Dhar's burning algorithm,~\cite{D},
as a sufficient condition to determine when a divisor cannot have rank $\geq1$.

Given an effective divisor $D\in\operatorname{Div}(\Gamma)$ and a point $w\in \Gamma$, Dhar's burning algorithm consists of starting a fire at $w,$ which spreads along each edge incident to $w$. Any $x\in \Gamma$ with $D(x)>0$ can control the fire in $D(x)$ directions: if the $D(x)$ is bigger or equal than the number of burnt edges incident to $x$ then the fire stops at $x$, otherwise it burns $x$ and continues to spread along all remaining edges incident to $x$.
Following~\cite{BS}, if the entire graph burns, then $D-w$ is a $w\mbox{-}$ reduced divisor with $\operatorname{rk}(D-w)=-1$, which implies that it cannot be linearly equivalent to an effective divisor; therefore $r(D)<1$.
\end{rema}

\begin{rema}\label{rk:tropmod}Given a tropical modification $\Gamma'$ of $\Gamma,$ the natural inclusion of metric spaces $i:\Gamma\hookrightarrow \Gamma'$ induces a morphism $i_*:\operatorname{Div}(\Gamma)\to\operatorname{Div}(\Gamma')$ which preserves the rank. Indeed, it is easy to see that the Jacobian of a metric tree is trivial.
\end{rema}

A harmonic morphism induces a map on rational functions and on divisors.
\begin{defin}
Let ${\varphi}:\Gamma\rightarrow \Gamma'$ be a harmonic morphism of degree $d$. 
\begin{enumerate}
    \item Given a rational function $f:\Gamma'\to \mathbb R$, the pullback of $f$ via $\varphi$ is the function $\varphi^*(f):\Gamma\to \mathbb R$ such that $\varphi^*(f)=g\circ \varphi$. One may check that $\varphi^*(f)$ is again rational.
\item The \textbf{pullback}
$\varphi^*:\operatorname{Div}(\Gamma')\rightarrow\operatorname{Div}(\Gamma)$ is the map that, given a divisor $D'$ in $\Gamma',$ defines a divisor $\varphi^*D'\in\operatorname{Div}(\Gamma),$ with $(\varphi^*D')(x)=m_{\varphi}(x)\cdot D'(\varphi(x)),$
for any $x\in\Gamma.$ One can easily check that $\varphi^*(\operatorname{Div}^d(\Gamma'))\subset \operatorname{Div}^{\operatorname{deg}(D)\cdot d}(\Gamma)$.
\end{enumerate}
\end{defin}

\begin{rema}
It follows from~\cite[Proposition 4.2(ii)]{BN} and~\cite[Proposition 2.7]{MC} that the pullback morphism on divisors respects linear equivalence; therefore, it induces a map
$\varphi^*:\operatorname{Jac}(\Gamma')\to \operatorname{Jac}(\Gamma)$.
\end{rema}

\subsection{Gonality of graphs and metric graphs}
\begin{defin}
    We say that a graph is \textbf{$k\mbox{-}$edge connected} if, by removing any $k-1$ edges, we still get a connected graph.
\end{defin}

\begin{defin}
    We say that a metric graph is \textbf{$k\mbox{-}$edge connected} if its canonical model is $k\mbox{-}$edge connected.
\end{defin}

\begin{defin}
    A \textbf{$k$-edge cut} of a graph is a set of $k$ distinct edges such that their removal disconnects the graph, but the removal of any proper subset of it does not.
    A $1$-edge cut is called a \textbf{bridge}. A $k$-edge cut of a metric graph is a $k$-edge cut of its canonical model.
\end{defin}

\begin{rema}
    Recall that a graph is $k$-edge connected if and only if given any pair of distinct vertices $u,v\in V(G)$, there are at least $k$ edge-disjoint paths connecting $u$ and $v$. 
    By Menger's theorem on edge connectivity (see~\cite[Theorem 9.7]{BM08}), $k$-edge connectivity is equivalent to the following property: an edge cut separating $u$ and $v$ must have cardinality at least $k$.
\end{rema} 

Here are different definitions of gonality for graphs and for metric graphs.
\begin{defin}\label{def:gonal}
    A graph $G$ is \textbf{$d$-gonal} if there is a non-degenerate harmonic morphism of degree $d$ from $G$ to a tree.
\end{defin}

\begin{defin}\label{def:metric_gonal}
    A metric graph $\Gamma$ is \textbf{$d$-gonal} if there is a non-degenerate harmonic morphism of degree $d$ from a tropical modification $\Gamma'$ of $\Gamma,$ to a metric tree. We will say that $\Gamma$ is \textbf{strictly $d$-gonal} if the tropical modification $\Gamma'$ coincides with $\Gamma$, i.e., there is a non-degenerate harmonic morphism of degree $d$ from  $\Gamma$  to a metric tree.
\end{defin}

\begin{defin}\label{def: div_gonal}
    A metric graph is \textbf{divisorially $d$-gonal} if $W_d^1(\Gamma)\neq \emptyset.$
\end{defin}

In the following we will say that a (metric) graph is \textbf{hyperelliptic}, respectively \textbf{trigonal}, if it is $2$-gonal, respectively $3$-gonal.
Similarly, a metric graph is \textbf{divisorially hyperelliptic}, respectively \textbf{divisorially trigonal}, if $W_2^1(\Gamma)\neq \emptyset,$ respectively
$W_3^1(\Gamma)\neq \emptyset.$

Hyperelliptic metric graphs have been intensively studied in~\cite{MC} and~\cite{L}, where such objects were shown to share several similarities with (algebraic) hyperelliptic curves. In the trigonal case, some of those features don't hold anymore, as we now explain. 

In~\cite[Theorem 3.6]{L} it has been proved that, given a (divisorially) hyperelliptic metric graph $\Gamma,$ all divisors of degree $2$ and rank $1$ are linearly equivalent. 
This is a tropical version of Martens' theorem,~\cite[Theorem 5.1]{ACGH}, for algebraic curves.

Notice that this is in contrast with the trigonal case.
A smooth irreducible non-hyperelliptic algebraic curve of genus $g\geq5$ cannot have two distinct $g_3^1$'s but
instead this is not true for divisorially trigonal metric graphs: we include an example to illustrate this phenomenon in Figure~\ref{fg:trig_no_unique}. The figure shows infinitely many divisors of degree $3$ and rank $1$ which are not linearly equivalent to each other. In Figure~\ref{fg:theta} we also provide an example, of genus $4$, where the divisors are finite, up to linear equivalence. This is not in contrast with the smooth algebraic case; indeed, a smooth irreducible non-hyperelliptic algebraic curve of genus $4$ whose canonical model lies on a smooth quadric surface has two $g^1_3$'s. 

\begin{figure}[ht]
\begin{tikzcd}
\begin{tikzpicture}  
    \draw[blue] (-1,0.5) -- (0, 0.5);
    \draw[verde] (1,0.5) -- (0, 0.5);
    \draw (-0.5,0.5) node[anchor=south] {$l_1$};
    \draw (0.5,0.5) node[anchor=south] {$l_2$};
    \draw[blue] (-1,0) -- (0, 0);
    \draw[verde] (0,0) -- (1, 0);
    \draw (-1.5,-0.5) -- (2,-0.5);
    \draw (-1.5,-0.5) node[anchor=north] {$x$};
    \draw (2,-0.5) node[anchor=north] {$y$};
    \draw (0,0.5) -- (0,0);
    \divisor{0,0}\divisor{0,0.5}
    \draw (-1,0.5) -- (-1,0);
    \draw (-1,0.5) -- (-1.5,-0.5);
    \draw (-1,0) -- (-1.5,-0.5);
    \vertex{-1,0}\vertex{-1,0.5}\vertex{-1.5,-0.5}
    \draw (1,0.5) --(2,-0.5);
    \draw (1,0) --(2,-0.5);
    \draw (1,0.5) --(1,0);
    \vertex{1,0}\vertex{1,0.5}\vertex{2,-0.5}\divisor{1,-0.5}
\end{tikzpicture}&
\begin{tikzpicture}
    \draw[blue] (-1,0.5) -- (0, 0.5);
    \draw[verde] (1,0.5) -- (0, 0.5);
    \draw (-0.5,0.5) node[anchor=south] {$l_1$};
    \draw (0.5,0.5) node[anchor=south] {$l_2$};
    \draw[blue] (-1,0) -- (0, 0);
    \draw[verde] (0,0) -- (1, 0);
    \draw (-1.5,-0.5) -- (2,-0.5);
    \draw (-1.5,-0.5) node[anchor=north] {$x$};
    \draw (2,-0.5) node[anchor=north] {$y$};
    \draw (0,0.5) -- (0,0);
    \divisor{0,0}\divisor{0,0.5}
    \draw (-1,0.5) -- (-1,0);
    \draw (-1,0.5) -- (-1.5,-0.5);
    \draw (-1,0) -- (-1.5,-0.5);
    \vertex{-1,0}\vertex{-1,0.5}\vertex{-1.5,-0.5}
    \draw (1,0.5) --(2,-0.5);
    \draw (1,0) --(2,-0.5);
    \draw (1,0.5) --(1,0);
    \vertex{1,0}\vertex{1,0.5}\vertex{2,-0.5}\divisor{0,-0.5}
    \end{tikzpicture}
\end{tikzcd}\caption{A divisorially trigonal metric graph of genus $5$ with two non-linearly equivalent divisors of degree $3$ and rank $1$. Edges of the same color have the same length and $d(x,y)\geq l_1+l_2$. The top $2$ red points and any point $p\in xy$ such that $d(x,p)\geq l_1$ and $d(y,p)\geq l_2$ yields a divisor of degree $3$ and rank $1$. In particular there are infinitely many such divisors.}\label{fg:trig_no_unique}
\end{figure}

\begin{figure}[ht]
\begin{tikzcd}
\begin{tikzpicture}  
    \draw(-1,0)--(0,0);
    \draw(-1,0) to [out=45, in=135](0,0);
    \draw(-1,0) to [out=-45, in=-135](0,0);
    \draw(1,0)--(0,0);
    \draw(0,0) to [out=45, in=135](1,0);
    \draw(0,0) to [out=-45, in=-135](1,0);
    \divisor{0,0}\divisor{1,0}\divisor{-1,0}
\end{tikzpicture}&
\begin{tikzpicture}
    \draw(-1,0)--(0,0);
    \draw(-1,0) to [out=45, in=135](0,0);
    \draw(-1,0) to [out=-45, in=-135](0,0);
    \draw(1,0)--(0,0);
    \draw(0,0) to [out=45, in=135](1,0);
    \draw(0,0) to [out=-45, in=-135](1,0);
    \divisor{-1,0}\vertex{0,0}\vertex{1,0}
    \draw(-1,0) node[anchor=east] {3};
    \end{tikzpicture}
\end{tikzcd}\caption{A divisorially trigonal metric graph with two non-linearly equivalent divisors of degree $3$ and rank $1$. Up to linear equivalence, there are the only two divisors of degree $3$ and rank $1$.}\label{fg:theta}
\end{figure}

Let us also recall Chan's main result on hyperelliptic metric graphs. 
\begin{theo}[{\cite[Theorem 3.12]{MC}}]
    Let $\Gamma$ be a metric graph with no points of valence $1$, and let $(G_{-},l_{-})$ denote its canonical loopless model. Then the following are equivalent:
    \begin{itemize}
        \item[(i)] $\Gamma$ is divisorially hyperelliptic.
        \item[(ii)] There exists an involution $i : G_{-} \to G_{-}$ such that $G_{-}/i$ is a tree.
        \item[(iii)] There exists a non-degenerate harmonic morphism of degree 2 from $G_{-}$ to a tree, or $|V (G_{-})| = 2.$
    \end{itemize}
\end{theo}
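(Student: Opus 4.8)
The plan is to establish the equivalence by a cycle of implications, (ii) $\Rightarrow$ (iii) $\Rightarrow$ (i) $\Rightarrow$ (ii), reserving the last one as the substantial step. Throughout I work with the canonical loopless model $(G_{-},l_{-})$, whose inserted midpoints are exactly what allow a degree-$2$ involution to interact cleanly with the edge structure. The two geometric directions are essentially a translation between ``quotient by an involution'' and ``harmonic morphism to a tree'', while the genuinely hard input is the passage from divisorial data back to a geometric structure.

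For (ii) $\Rightarrow$ (iii), an involution $i\colon G_{-}\to G_{-}$ with tree quotient $T=G_{-}/i$ furnishes the candidate morphism, namely the quotient map $\pi\colon G_{-}\to T$. Since $i$ has order $2$, each edge of $T$ is covered either by a pair of edges swapped by $i$ (each of index $1$) or by a single edge that $i$ flips about its fixed midpoint, which folds $2$-to-$1$ onto a half-edge; in either case the fibre degree is $2$ and $m_\pi(x)$ is constant along the edges at $\pi(x)$, so $\pi$ is harmonic of degree $2$. If $\pi$ is non-degenerate we are done, and the only way non-degeneracy can fail is that $G_{-}$ collapses onto a single edge, which forces $|V(G_{-})|=2$, precisely the alternative allowed in (iii); this is also why the banana graphs, for which the folding morphism exists only after inserting midpoints, are handled by that disjunct.

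For (iii) $\Rightarrow$ (i), when $|V(G_{-})|=2$ the metric graph is a banana, and folding each edge about its midpoint produces (after inserting the midpoints) a non-degenerate harmonic morphism of degree $2$ onto a star-shaped tree, reducing this case to the generic one. In the generic case I invoke the pullback construction of Lemma \ref{lm:Lemma_preim}: fixing a point $y_0$ in the target tree $T$ and setting $D=\varphi^{*}(y_0)$, every point of $T$ is linearly equivalent to $y_0$, so for any $x\in\Gamma$ the effective divisor $\varphi^{*}(\varphi(x))$ is linearly equivalent to $D$, has degree $2$, and dominates $x$ by non-degeneracy. This yields an effective $E'$ with $D-x\sim E'$, whence $\operatorname{rk}(D)\geq 1$ and $W_2^1(\Gamma)\neq\emptyset$.

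The main obstacle is (i) $\Rightarrow$ (ii): reconstructing an involution from purely divisorial data. Starting from $D$ with $\operatorname{deg}(D)=2$ and $\operatorname{rk}(D)\geq 1$, I would define a candidate hyperelliptic involution $\sigma\colon\Gamma\to\Gamma$ by sending $x$ to the effective degree-$1$ divisor $\sigma(x)$ determined by $D-x\sim\sigma(x)$, which exists because $\operatorname{rk}(D)\geq1$. The crucial and delicate point is that $\sigma(x)$ is \emph{unique}: if $D-x\sim y$ and $D-x\sim y'$ then $y\sim y'$, and one must rule out distinct linearly equivalent points. I expect the real work here, and I would argue it using $x$-reduced divisors together with Dhar's burning algorithm (Remark \ref{rk:Dhar}) to show that the unique $x$-reduced representative of $D$ has the form $x+\sigma(x)$, controlling in particular the behaviour along bridges where the Abel--Jacobi map may fail to be injective. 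Continuity of $\sigma$ and the relation $\sigma^{2}=\mathrm{id}$ then follow from uniqueness and the identity $x+\sigma(x)\sim D$. Finally, refining $G_{-}$ if necessary so that $\sigma$ is simplicial, I would descend $\sigma$ to an involution $i$ of $G_{-}$ and show $G_{-}/i$ is a tree by a genus count: the degree-$2$ quotient map, combined with $\operatorname{rk}(D)\geq 1$, forces $b_1(G_{-}/i)=0$. The subtle part throughout is ensuring that the metric descends and that the branch locus of $\sigma$ is compatible with the combinatorial model, so that the quotient is a genuine metric tree rather than merely a homotopy tree; the classification of low-degree reduced divisors is the tool I would rely on to pin this down.
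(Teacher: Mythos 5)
This statement is not proved in the paper at all: it is quoted verbatim from Chan \cite[Theorem 3.12]{MC} and used as a black box, so the only meaningful comparison is with Chan's original argument. Your architecture --- the cycle (ii) $\Rightarrow$ (iii) $\Rightarrow$ (i) $\Rightarrow$ (ii), with the pullback of a point for (iii) $\Rightarrow$ (i) and the reconstruction of the involution from the $g^1_2$ for (i) $\Rightarrow$ (ii) --- is indeed the standard one, and (iii) $\Rightarrow$ (i) is essentially the paper's Lemma \ref{lm:Lemma_preim} (note only that non-degeneracy gives $m_\varphi(x)\geq 1$ just at vertices; for $x$ in the interior of a contracted edge one needs the extra linear-equivalence argument that the lemma supplies).

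The genuine gap is in (i) $\Rightarrow$ (ii), and you have correctly located it but not filled it. The assignment $x\mapsto\sigma(x)$ with $D-x\sim\sigma(x)$ is simply not well defined when $\Gamma$ has a bridge: any two points of a closed bridge are linearly equivalent, so $\sigma(x)$ is a nontrivial linear equivalence class rather than a point, and the hypothesis here is only ``no points of valence $1$'', so bridges are very much in play (e.g.\ the dumbbell graph). Saying you would ``argue it using $x$-reduced divisors together with Dhar's burning algorithm'' defers the entire content of the hard direction: one must make a canonical choice (in Chan's treatment the involution restricts to the identity on bridges), prove $\sigma$ is a continuous involution of the metric space that is simplicial on $G_{-}$ itself (not merely on some refinement --- the statement demands an involution of $G_{-}$), and show the quotient is a tree; the ``genus count'' you invoke already presupposes that the quotient map is everywhere of local degree $2$, which is part of what has to be established. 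There is also a secondary unchecked point in (ii) $\Rightarrow$ (iii): harmonicity of the quotient map is not automatic at a vertex fixed by $i$ where edge orbits of size $1$ and size $2$ coexist (the local multiplicities over the corresponding edges of $T$ are then $1$ and $2$), and your diagnosis of the $|V(G_{-})|=2$ exception as a failure of \emph{non-degeneracy} is not quite right --- for a banana graph with $n\geq 3$ edges no non-degenerate harmonic morphism of degree $2$ to a tree exists at all, and for $|V(G_{-})|>2$ one must argue that the folded edges can be arranged so that the quotient map is honestly harmonic on $G_{-}$.
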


This shows that a metric graph is (divisorially) hyperelliptic if and only if the underlying graph of its canonical model is hyperelliptic, but we can actually say more on the non-degenerate harmonic morphism $\varphi$ of degree $2$ from $G_{-}$ to a tree $T.$

From~\cite[Theorem 3.2, Claim 3.3]{MC}, the involution $i$ is an isometry, therefore edges sent to the same edge of the tree have the same length on the metric graph. Then we can define a length function $l_T$ on $E(T),$ such that $l_T(\varphi(e))=l_0(e)$ for any $e\in E(G_{-})$ with $\varphi(e)\in E(T).$

In particular, this shows that, given a metric graph $\Gamma$ with a canonical loopless model $(G_{-},l_{-})$ such that $|V (G_{-})|>2,$ the following holds:
$$\Gamma \text{ is divisorially hyperelliptic }\Leftrightarrow \Gamma \text{ is strictly hyperelliptic}.$$

We would like to extend this relation to the trigonal case. In the case of (loopless) graphs, this problem has been considered by  Aidun, Dean, Morrison,  Yu and Yuan in~\cite{ADMYY}, where the authors prove that, for a $3$-edge connected graph $G$, the existence of a non-degenerate harmonic morphism of degree $3$ from $G$ to a tree is equivalent to the existence of a divisor in $G$ of degree $3$ and rank at least $1$\footnote{Divisors on graphs are supported at the vertices and their ranks coincide with the ranks of the associated metric graphs with all edges of the same length (see for instance~\cite{BN07} and~\cite{AC}).}.
However, things are much more complicated when we consider metric graphs, as we will now discuss.

On the positive side, we can prove that the easier implication holds in general for metric graphs and any $d$; i.e., a metric graph that is strictly $d$-gonal is also divisorially $d$-gonal. 

\begin{lemm}\label{lm:Lemma_preim}
    Let $\Gamma=(G,l)$ and $\Gamma_T=(T,l_T)$ be metric graphs with $T$ a tree and let $\varphi:(G,l)\rightarrow (T,l_T)$ be a non-degenerate harmonic morphism of degree $d$. Then, for any point $t\in\Gamma_T$, $D:=\varphi^*(t)\in W_d^1(\Gamma).$
\end{lemm}

\begin{proof}
The proof follows the same ideas of~\cite[Proof of Theorem 3.2]{MC}. 

Let $t\in\Gamma_T$ and $D:=\varphi^*(t)=\sum_{i=1}^d x_i$ for some $x_i\in\Gamma$. The divisor $D$ has clearly degree $d$. It suffices to prove that it has rank greater or equal to $1$.
Let $w\in\Gamma.$ Since $\operatorname{Jac}(T)=1,$ we have $\varphi(w)\sim t$ and $D\sim \varphi^*(\varphi(w))=m_{{\varphi}}(w)\cdot w+ E$
for some effective divisor $E.$

If $w\in V(G)$ or $w\in\mathring{e},$ for some $e\in E(G)$ such that $\varphi(e)\in E(T),$ then, by~\eqref{hor_mult} in Definition~\ref{def_metric}, $m_{{\varphi}}(w)>0$ and $D-w\sim E'$ for some effective divisor $E'.$

If instead $w\in\mathring{e},$ for some $e\in E(G)$ such that $\varphi(e)\in V(T),$ then $m_{\varphi}(w)=0,$ but 
\begin{align*}
({\varphi})^*({\varphi}(w))&=\sum_{x\in \Gamma; {\varphi}(x)={\varphi}(w)}m_{{\varphi}}(x)\cdot x\\
&=\sum_{x\in V(G); {\varphi}(x)={\varphi}(w)}m_{{\varphi}}(x)\cdot x.\\
\end{align*}
Let $y,y'$ be the endpoints of $e,$ then $y+y'\sim w+w'$ for some $w'\in e$ and we have that also in this case $D-w\sim E''$ for some effective divisor $E'',$ which concludes the proof.
\end{proof}

From Remark~\ref{rk:tropmod} then it follows that a $d$-gonal metric graph is also divisorially $d$-gonal.

Here is instead an important example that shows that the opposite implication is trickier.

\begin{example}\label{not_trigonal}Consider the metric graph $(G_0,l_0)$ in Figure~\ref{fig:div_trigonal}. The figure represents a graph in 3D, where the dashed edges do not intersect the interior of any other edge and the lengths $l_1,l_2,l_3$ are all distinct.    
The red points define a divisor of degree 3 and rank $1$. Observe that there exists a non-degenerate harmonic morphism of graphs of degree $3$ from $G_0$ to the tree $T=K_2$, namely the one sending the three horizontal edges $e_1,e_2,e_3$ to the same edge and contracting everything else. 

However this morphism does not induce a non-degenerate harmonic morphism of degree $3$ from $(G_0,l_0)$ to any model $(T,l_T):$ by Remark~\ref{rk:lengths}, any length function $l_T$ on $T$ would require $l_0(e_1)=l_0(e_2)=l_0(e_3),$ which is a contradiction.

We do obtain instead a morphism of metric graphs which is non-degenerate, harmonic and of degree $3$ if instead we consider the models $(G,l)$, $(T',l_{T'})$ as in Figure~\ref{fig:div_trigonal_ref}.
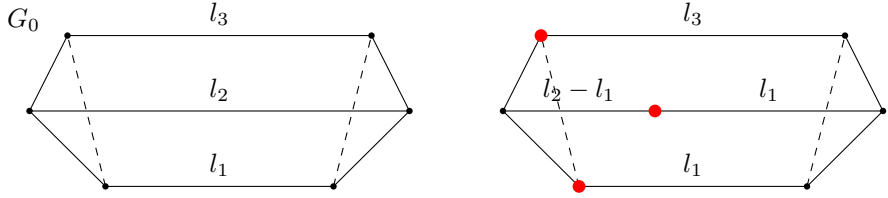
\begin{figure}[ht]
\begin{tikzcd}
\begin{tikzpicture}
   \draw (-0.7,2.2) node[anchor=east] {$G_0$};      
   \path[draw] (0,0) -- +(3, 0);
    \path[draw] (-1,1) -- +(5, 0);
        \path[draw] (-0.5,2) -- +(4,0);
        \draw [dashed] (0,0) -- (-0.5,2);
        \draw [dashed] (3,0) -- (3.5,2);
        \draw (0,0) -- (-1,1);
        \draw (-1,1) -- (-0.5,2);
        \draw (3.5,2) -- (4,1);
        \draw (3,0) -- (4,1);
    \draw (1.5,0) node[anchor=south] {$l_1$};
    \draw (1.5,1) node[anchor=south] {$l_2$};
    \draw (1.5,2) node[anchor=south] {$l_3$};
    	\foreach \i in {0,3} {
    	    \vertex{\i, 0}
    	    }
        \foreach \i in {-1,4} {
    	    \vertex{\i, 1}
    	    }
        \foreach \i in {-0.5,3.5} {
    	    \vertex{\i, 2}
    	    }
\end{tikzpicture}&
\begin{tikzpicture}
        \path[draw] (0,0) -- +(3, 0);
    	\path[draw] (-1,1) -- +(5, 0);
        \path[draw] (-0.5,2) -- +(4,0);
        \draw [dashed] (0,0) -- (-0.5,2);
        \draw [dashed] (3,0) -- (3.5,2);
        \draw (0,0) -- (-1,1);
        \draw (-1,1) -- (-0.5,2);
        \draw (3.5,2) -- (4,1);
        \draw (3,0) -- (4,1);
    	\foreach \i in {0,3} {
    	    \vertex{\i, 0}
    	    }
        \foreach \i in {-1,4} {
    	    \vertex{\i, 1}
    	    }
        \foreach \i in {-0.5,3.5} {
    	    \vertex{\i, 2}
    	    }
        \divisor{1, 1}
        \divisor{0, 0}
        \divisor{-0.5,2}
    \draw (1.5,0) node[anchor=south] {$l_1$};
    \draw (0,1) node[anchor=south] {$l_2-l_1$};
    \draw (2.5,1) node[anchor=south] {$l_1$};
    \draw (1.5,2) node[anchor=south] {$l_3$};
    \end{tikzpicture}
\end{tikzcd}\caption{A metric graph $(G_0,l_0)$ with a divisor of degree 3 and rank 1.}\label{fig:div_trigonal}
\end{figure}

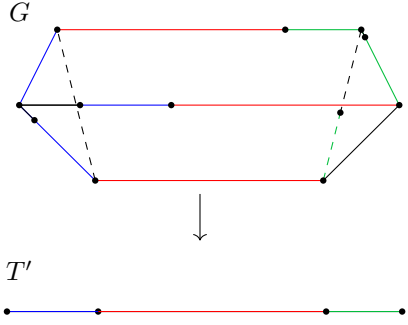
\begin{figure}[ht]
\begin{tikzcd}
\begin{tikzpicture}
        \draw (-0.7,2.2) node[anchor=east] {$G$};
        \path[draw,red] (0,0) -- +(3, 0);
        \draw (-1,1) -- (-0.2,1);
        \draw[blue] (1,1) -- (-0.2,1);
        \draw[red] (4,1) -- (1,1);
        \path[draw, red] (-0.5,2) -- +(3,0);
        \draw [verde] (2.5,2) -- (3.5,2);
        \draw [dashed] (0,0) -- (-0.5,2);
        \draw [dashed,verde] (3,0) -- (3.225,0.9);
        \draw[blue] (0,0) -- (-1,1);
        \draw[blue] (-1,1) -- (-0.5,2);
        \draw[verde] (3.5,2) -- (4,1);
        \draw (3,0) -- (4,1);
    \vertex{1, 1}
    \vertex{0,0}
    \vertex{-0.5,2}
    \vertex{3, 0}\vertex{4,1}\vertex{2.5,2}
    \vertex{3.5, 2}
    \vertex{3.55,1.9}
    \vertex{3.225,0.9}
    \draw[dashed](3.5, 2)--(3.225,0.9);
    \draw(3.5, 2)--(3.55,1.9);
    \vertex{-1,1}\vertex{-0.2,1}\vertex{-0.8,0.8}
    \draw (-1,1)--(-0.2,1);
    \draw (-1,1)--(-0.8,0.8);
    \end{tikzpicture}\arrow[d]\\
    \begin{tikzpicture}
    \draw (-0.7,0.5) node[anchor=east] {$T'$};
    \draw (-1.2,0)--(4,0);
    \vertex{0,0}
    \draw[red](0,0)--(3,0);
    \draw[blue](-1.2, 0)--(0,0);
    \draw[verde](4,0)--(3,0);
    \vertex{3, 0}
    \vertex{4, 0}
    \vertex{-1.2, 0}
    \end{tikzpicture}
\end{tikzcd}\caption{A non-degenerate harmonic morphism of degree $3$, from a refinement $(G,l)$ of $(G_0,l_0)$ to a metric tree $(T',l_{T'})$.}\label{fig:div_trigonal_ref}
\end{figure}

\end{example}

This example suggests that the divisorial trigonality of a metric graph $\Gamma$ is not always equivalent to graph trigonality of its canonical model $G_0$, but instead it might be equivalent to the trigonality of a graph defining a metric graph which is a tropical modification of $\Gamma$.

\begin{defin}
    Let $D\in W_d^1(\Gamma),$ and let $(G_0,l_0)$ be the canonical model of $\Gamma.$ An \textbf{admissible representative} for $D$ with respect to $x\in V(G_0)$ is a divisor $D_x\sim D$ such that $D_x=x+\sum_{i=1}^{d-1}x_i,$ with $x_i,x_j$ not in the interior of the same edge of $G_0$ for $i\neq j$.
\end{defin}

It follows from the following remarks that, given $D\in W_d^1(\Gamma)$, we can always find an admissible representative for $D$.

\begin{rema}\label{rk:common_edge}
    Let $D$ be an effective divisor of degree $d\geq 2$. We can always write $D\sim \sum_{i=1}^d y_i,$ with $y_i\in \Gamma$ such that no two points belong to the interior of the same edge of $G_0$. In fact, if $D\sim \sum_{i=1}^d x_i,$ then for any pair of points $x_i,x_j$ both contained in the interior of the same edge $e,$ we can consider the following rational function. Denote by $v_i,v_j$ the endpoints of $e,$ where $v_i$ is closest to $x_i$ and $v_j$ to $x_j,$ and let $f$ be the rational function with slope $-1$ from both of $x_i,x_j$ towards $v_i,v_j$, respectively, on a path of length $\operatorname{min}\{d(x_i,v_i),d(x_j,v_j)\},$
    and constant everywhere else. This gives  $x_i+x_j\sim y_i+y_j$ where at least one of the points $y_i,y_j$ is a vertex.
\end{rema}

\begin{rema}\label{rk:vertices}
Let $D\in W_d^1(\Gamma)$. Then, for any $x\in \Gamma,$ we can always assume $D\sim x+\sum_{i=1}^{d-1}x_i$. Indeed, since $D$ has rank at least $1$, we have $D-x\sim E,$ with $E$ effective. Therefore it is sufficient to pick $\sum_{i=1}^{d-1}x_i$ as the support of $E.$ 
\end{rema}

Let us conclude the section with a remark on the rank of a divisor of degree $d.$
\begin{rema}\label{rk:rank1}
    In~\cite[Theorem 3.6]{AC} and~\cite[Theorem A.1]{L}, respectively, \emph{Riemann's theorem} and \emph{Clifford's theorem} have been generalized to metric graphs. In particular, given a metric graph $\Gamma$ of genus $g$ and a divisor $D\in\operatorname{Div}^d(\Gamma),$ with $d\geq 1$,
    \begin{enumerate}
        \item \emph{(Riemann's theorem)} if $\operatorname{deg}(D)\geq 2g-1$ then $\operatorname{rk}(D)=\operatorname{deg}(D)-g;$ 
        \item \emph{(Clifford's theorem)} if $\operatorname{deg}(D)<2g-2$ then $2\operatorname{rk}(D)\leq\operatorname{deg}(D)$.
    \end{enumerate}
    From {(1)}, if $g=1$ then $\operatorname{rk}(D)=d-1$ and if $g=2$ and $d\geq 3$ then $\operatorname{rk}(D)=d-2.$
    
    It follows also from {(2)}, that if $d=3$ and $g\geq3$ then $\operatorname{rk}D\leq 1.$
    From now on we will assume $g\geq 2$ and therefore $ W_3^1(\Gamma)= W_3^1(\Gamma)\setminus W_3^2(\Gamma).$
\end{rema}

\section{Trigonal and divisorially trigonal metric graphs: the \texorpdfstring{$3$}{3}-edge connected case}\label{sc:3_connected}

Let us consider $3$-edge connected metric graphs.
We first observe that none of such graphs can be hyperelliptic. In fact, it was proved  in~\cite[Lemma 5.3]{BN} that the edge connectivity of a hyperelliptic (combinatorial) graph must be at most $2$, and, by the correspondence in~\cite[Theorem 3.12]{MC}, the same holds for metric graphs.

Here is an alternative proof for metric graphs, using Dhar's burning algorithm.
\begin{lemm}\label{lm:no_hyp}
    Let $\Gamma=(G,l)$ be a metric graph with canonical model $\Gamma_0=(G_0,l_0)$ such that $|V(G_0)|\geq 3$. If $\Gamma$ is $3$-edge connected, then it is not (divisorially) hyperelliptic.
\end{lemm}

\begin{proof}
    By contradiction, assume $H\in W_2^1(\Gamma)\neq \emptyset.$
    From Remark~\ref{rk:common_edge}, we can consider $H=x+y,$ with $x,y$ not in the interior of the same edge. 
    Start Dhar's burning algorithm from any vertex $w\in V(G_0);$ $w\neq x,y$. By $3$-edge connectivity, there are three distinct paths from $w$ to $x$. Even if one of such paths stops at $y$, the other two will then reach $x$ and again $y$, burning the whole graph.
    \end{proof}

\begin{rema}\label{rk:tree_gon}
This also shows that if a $3$-edge connected metric graph $\Gamma$ is trigonal, then its \emph{tree gonality}~\cite{CD},\cite{DV}, i.e. the minimum degree of a tropical morphism from a tropical modification of $\Gamma$ to a metric tree, is $3.$ 
In fact, they define a tropical morphism as an harmonic morphism with no contractions satisfying the Riemann-Hurwitz inequality at any point.
We can indeed replace the harmonic morphism with its tropical modification with no contractions as in Proposition~\ref{prp: harm_contractions}, which is of degree $3$ and thus satisfies the Riemann-Hurwitz inequality, as observed in~\cite[Remark 6]{LC}. 
\end{rema}

In Lemma~\ref{lm:Lemma_preim}, we proved that a non-degenerate harmonic degree $3$ morphism $\varphi:(G,l)\to(T,l_T),$ with $T$ a tree, determines a divisor in $W_3^1(\Gamma)$, where $\Gamma=(G,l).$ 

Instead, to determine a trigonal graph from a divisor in $W_3^1(\Gamma)$ is more difficult. We consider first the case without loops and then extend our construction to the general case with loops.

\subsection{The loopless case}\label{sc:loopless}
The relation between the definitions of gonality that we want to prove is the following.

\begin{theo}\label{th:Main_Theo}
    Let $\Gamma$ be a loopless $3\mbox{-}$edge connected metric graph with canonical model $(G_0,l_0).$ The following are equivalent.
    \begin{itemize}
        \item[A.] $|V(G_0)|=2,3$ or $\Gamma$ is strictly trigonal.
        \item[B.]  $\Gamma$ is divisorially trigonal.
    \end{itemize}
\end{theo}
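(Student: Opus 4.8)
The plan is to establish the two implications separately, dealing with the exceptional cases $|V(G_0)|\in\{2,3\}$ by a direct divisor computation and reserving the real work for B $\Rightarrow$ A when $|V(G_0)|\geq 4$.

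For A $\Rightarrow$ B there are two cases. If $\Gamma$ is trigonal, a non-degenerate harmonic morphism of degree $3$ from a tropical modification $\Gamma'$ of $\Gamma$ to a metric tree produces, by Lemma \ref{lm:Lemma_preim}, a divisor in $W_3^1(\Gamma')$, and Remark \ref{rk:tropmod} transports it back to $W_3^1(\Gamma)$, so $\Gamma$ is divisorially trigonal. If instead $|V(G_0)|\in\{2,3\}$, I would write down an explicit rank-one divisor of degree $3$: for two vertices $u,v$ take $D=2u+v$, and for three vertices $a,b,c$ take $D=a+b+c$. In either case $D-q\geq 0$ at every vertex $q$, and for an interior point $p$ of an edge with endpoints $s,t$ (both in $\operatorname{Supp}D$) the ``tent'' function descending with slope $1$ from $s$ and from $t$ to $p$ gives $\operatorname{div}(f)=2p-s-t$, whence $D\sim 2p+(D-s-t)\geq p$. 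Thus $\operatorname{rk}(D)\geq 1$ and $\Gamma$ is divisorially trigonal.

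For B $\Rightarrow$ A, the cases $|V(G_0)|\in\{2,3\}$ make A true through its first clause, so I assume $|V(G_0)|\geq 4$ and fix $D\in W_3^1(\Gamma)$. The first step is to pass to a combinatorial model: by Remarks \ref{rk:common_edge} and \ref{rk:vertices} I may choose an admissible representative of $D$ and refine $(G_0,l_0)$ to a model $(G,l)$ on which $D$ is vertex-supported. Refinement preserves $3$-edge connectivity and, by Lemma \ref{lm:no_hyp}, $\Gamma$ is not hyperelliptic, so the pencil is not a hyperelliptic one with a base point. On the finite graph $G$ the divisor $D$ has degree $3$ and rank $\geq 1$, so the $3$-edge connected graph result of \cite{ADMYY} yields a non-degenerate harmonic morphism of \emph{graphs} $\varphi\colon G\to T$ of degree $3$ to a tree.

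The remaining, and main, difficulty is the \emph{metric realization} of $\varphi$. As Example \ref{not_trigonal} shows, $\varphi$ need not respect the lengths $l$: several edges of different lengths may be sent onto a single edge of $T$, and by Remark \ref{rk:lengths} no length function on $T$ can then satisfy the index relations \eqref{index}. I would resolve this by a controlled tropical modification, refining $G$ and inserting at the offending vertices trees (leaves of prescribed lengths) exactly in the spirit of Figure \ref{fig:div_trigonal_ref}, and simultaneously modifying $T$ into a tree $T'$, so that within each fiber the segments lying over a common edge can be matched to equal lengths and the index relations hold on every edge. The delicate points are to perform these insertions coherently along the whole graph while keeping the morphism harmonic and of degree $3$, to remove any residual contractions via Proposition \ref{prp: harm_contractions}, and to verify non-degeneracy; here $3$-edge connectivity is essential, as it rules out the hyperelliptic subconfigurations that would obstruct a uniform choice of modification, and it is precisely the lack of combinatorial room that forces $|V(G_0)|\in\{2,3\}$ into the exceptional clause (cf.\ the banana graph $B_3$, which is divisorially trigonal but admits no metric trigonal structure even after modification). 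Throughout, Dhar's burning algorithm (Remark \ref{rk:Dhar}) would be used to track the reduced divisors and confirm that every fiber of the final morphism is linearly equivalent to $D$.
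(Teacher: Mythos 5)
Your proof of A $\Rightarrow$ B is essentially the paper's: Lemma \ref{lm:Lemma_preim} together with Remark \ref{rk:tropmod} for the trigonal case, and explicit divisors $2u+v$, resp.\ $a+b+c$, when $|V(G_0)|=2,3$. One small slip there: the ``tent'' function with $\operatorname{div}(f)=2p-s-t$ is not a well-defined rational function unless $p$ is equidistant from $s$ and $t$, because $s$ and $t$ are also joined through the rest of the graph, where $f$ is constant, so continuity forces $f(s)=f(t)$. You must descend from both endpoints by the common length $\min\{d(s,p),d(t,p)\}$, as the paper does; this gives $D\sim z+p+p'$ with $p'$ possibly different from $p$, which still proves $\operatorname{rk}(D)\geq 1$.

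The direction B $\Rightarrow$ A has two genuine gaps. First, you cannot invoke \cite{ADMYY} as you do. After refining so that $D$ is supported on $V(G)$, the Baker--Norine rank of $D$ on the combinatorial graph $G$ is by definition the rank on the metric graph whose edges all have \emph{equal} length, not the rank on $(G,l)$, and the two genuinely differ: in Example \ref{not_trigonal} the vertex-supported divisor consisting of one endpoint of each of the three long strands has combinatorial rank $1$ (it is a fiber of the degree-$3$ morphism $G_0\to K_2$) but metric rank $0$ as soon as the $l_i$ are distinct, as Dhar's algorithm started at a vertex of the opposite triangle shows; the metric rank-one divisor is forced off the vertices. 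So ``metric rank $\geq 1$'' does not supply the hypothesis of the ADMYY theorem. Second, and decisively, the step you yourself flag as ``the remaining, and main, difficulty'' --- modifying $G$ and $T$ so that the index relations \eqref{index} can be satisfied --- is not carried out, and it is the entire content of the paper's proof; nothing in your sketch explains why the three edges in a fiber of a combinatorial morphism could ever be arranged to have a common length, and no insertion of leaves can repair three edges of pairwise distinct lengths mapping to one edge of the tree. The paper avoids both problems by never passing through the combinatorial theorem: it builds the tree directly from the divisor. Each vertex of $T_D$ is the unique admissible representative $D_x$ (Lemma \ref{lm:disj_rep}), consecutive representatives are joined by exactly three edges forming a $3$-edge cut, and those three edges have \emph{equal length} because they are the only edges on which the continuous rational function realizing $D_x\sim D_y$ is non-constant (Lemmas \ref{lm:existence} and \ref{lm:3_cut}); equal lengths in every fiber is extracted from the divisor, not retrofitted onto a combinatorial map. (Your parenthetical about the banana graph $B_3$ is also misplaced: $|V(G_0)|=2$ falls under the first clause of A, and the obstruction you mean to illustrate is Example \ref{not_trigonal}.)
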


\begin{rema}
     The above result has already been proved in terms of combinatorial graphs in~\cite{ADMYY}, for a $3$-edge connected graph $G$.
     In the metric case, one needs to consider also edge lengths, which adds a layer of complexity. However, if we restrict to the loopless case, the results in the metric and non-metric case are comparable.
     We will indeed see in this section that when $\Gamma$ is a $3\mbox{-}$edge connected metric graph without loops, if it is trigonal, then the tropical modification $\Gamma',$ from which a non-degenerate harmonic morphism of degree 3 to a tree exists, is such that $\Gamma'=\Gamma,$ i.e. no trees are added in this case.
\end{rema}

\begin{proof}[Proof of Theorem~\ref{th:Main_Theo}, $A.\Rightarrow B$ ]
The proof of this first implication when $|V(G_0)|>3$ and $\Gamma$ is trigonal follows from Lemma~\ref{lm:Lemma_preim}.

Instead, if $|V(G_0)|=2,$ define $D:=x+2y,$ s.t $x,y\in V(G_0)$ and $x\neq y.$
Then, for any $w\in \Gamma,$ set $d=\operatorname{min}\{d(x,w),d(y,w))\}$ and define $f_w:\Gamma\rightarrow \mathbb{R}$ with slope $-1$ from both $x,y$ along a path of length $d$ over the edge containing $w$ and constant everywhere else.
This gives $D-w+\operatorname{div}(f_w)=z+w'$, with $z\in\{x,y\}$ and $w'\in \Gamma$ which proves that $D$ has rank $1.$

Finally, if $|V(G_0)|=3,$ define instead $D:=x+y+z,$ s.t $x,y,z\in V(G_0)$ and $x\neq y\neq z.$
By a similar procedure as the above, we can check that also in this case $D$ has rank 1.
\end{proof}

We will now focus on the implication $B.\Rightarrow A.$ and explain the strategy of the proof.
Throughout the rest of the section, $\Gamma$ will be a $3\mbox{-}$edge connected metric graph with canonical model $(G_0,l_0)$ with $|V(G_0)|>3.$

Take $D\in W_3^1(\Gamma)$ and let us recall that given $x\in V(G_0)$, an admissible representative for $D$ with respect to $x$ is a divisor 
$D_x=x+x_1+x_2$ with $x_1,x_2$ not in the interior of the same edge of $G_0.$ We will start by proving that such admissible representatives are unique in the $3$-edge connected case.

\begin{lemm}\label{lm:disj_rep}
    Let $D\in W^1_3(\Gamma),$ with $\Gamma$ a $3$-edge connected metric graph.
    For any $x\in V(G_0)$ there is a unique admissible representative $D_x$ for $D.$
\end{lemm}

\begin{proof}
    Fix $x\in V(G_0)$ and assume by contradiction that $D_x=x+x_1+x_2$ is linearly equivalent to $x+x_1'+x_2'$ with $\{x_1,x_2\},\{x_1',x_2'\}$ disjoint and $x_1',x_2'$ not in the interior of the same edge. If $x+x_1+x_2\sim x+x_1'+x_2'$ then $x_1+x_2\sim x_1'+x_2'$ which means that there exists a rational function $f$ such that $\operatorname{div}(f)=x_1+x_2-x_1'-x_2'.$

    Let us denote by $\mathbf m\subset\Gamma,$ $\mathbf M\subset\Gamma,$ the set of points of $\Gamma$ over which $f$ is minimized and maximized, respectively, and by  $\partial \mathbf m$,  $\partial \mathbf M$ their boundaries. 
    
    Notice that $3$-edge connectivity implies $2$-edge connectivity. Therefore by~\cite[Lemma 3.1. (i)]{MC} $\partial \mathbf m=\{x_1,x_2\}$ and $\partial \mathbf M=\{x_1',x_2'\}$ and there are precisely $2$ edges leaving $\partial \mathbf m$ one at each $x_i,$ and precisely $2$ edges leaving $\partial \mathbf M$ one at each $x_i'$. The minimizing and maximizing sets $\mathbf m$ and $\mathbf M$ cannot be disconnected, otherwise these edges would be disconnecting, which is impossible by our connectivity assumption.

    Notice that the loci $\mathbf m$ and $\mathbf M$ must contain vertices, otherwise the two pairs $x_1,x_2$ and $x_1',x_2'$ are all contained in the same edge, which is not possible by the definition of admissible representative.
    
    By $3$-edge connectivity, there must be a path connecting a certain vertex in $\mathbf m$ to a certain vertex in $\mathbf M,$ not passing through the above mentioned edges. 
    However, such path cannot exist because by the continuity of $f$, it should pass through points in $\partial \mathbf m$ and $\partial \mathbf M$.
\end{proof}

\begin{rema}\label{rk:disj_supp}
As a consequence of the proof of Lemma~\ref{lm:disj_rep}, any two distinct admissible representatives $D_x,$ $D_y$ of $D$ are such that $\operatorname{Supp}(D_x)\cap\operatorname{Supp}(D_y)=\emptyset.$ Indeed, in the proof, the fact that $x,y\in V(G_0)$ does not play any role.
\end{rema}

Take $D\in W_3^1(\Gamma)$ and consider the set of admissible representatives for $D:$ 
$$A_D:=\{D_x|\, D_x\sim D \text{ and $D_x$ is admissible with respect to $x\in V(G_0)$}\},$$ i.e. the set of all divisors which are linearly equivalent to $D$ and of the form $D_x=x+x_1+x_2$ with $x\in V(G_0)$ and $x_1,x_2$ not in the interior of the same edge of $G_0.$

Clearly, since $V(G_0)$ is finite, the above lemma also yields that $A_D$ is finite.

Given $D\in W_3^1(\Gamma),$ we want to construct a model $(G_D,l_D)$ which is a refinement of the canonical model $(G_0,l_0)$ and a morphism $\varphi_D:G_D\to T_D$ where $T_D$ is a tree.
Furthermore we will define a length function $l_{T_D}$ on $E(T_D)$ such that $\varphi_D$ induces a morphism of metric graphs $\tilde{\varphi}_D:(G_D,l_D)\rightarrow (T_D,l_{T_D}).$ 

Let us construct first $(G_D,l_D).$
Define $G_D$ as the refinement of $G_0$ obtained by adding a vertex at $x_1$ and $x_2$ (if they were not already vertices) for any admissible representative $x+x_1+x_2$ of $D$, and define the lengths of the new edges accordingly, as in Figure~\ref{fig:refin}.

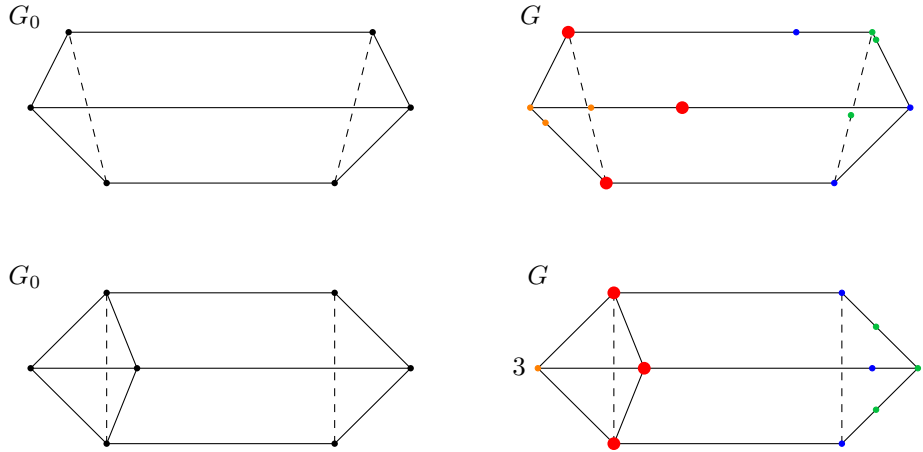
\begin{figure}[ht]
\begin{tikzcd}
\begin{tikzpicture}
   \draw (-0.7,2.2) node[anchor=east] {$G_0$};      
   \path[draw] (0,0) -- +(3, 0);
    	\path[draw] (-1,1) -- +(5, 0);
        \path[draw] (-0.5,2) -- +(4,0);
        \draw [dashed] (0,0) -- (-0.5,2);
        \draw [dashed] (3,0) -- (3.5,2);
        \draw (0,0) -- (-1,1);
        \draw (-1,1) -- (-0.5,2);
        \draw (3.5,2) -- (4,1);
        \draw (3,0) -- (4,1);
    	\foreach \i in {0,3} {
    	    \vertex{\i, 0}
    	    }
        \foreach \i in {-1,4} {
    	    \vertex{\i, 1}
    	    }
        \foreach \i in {-0.5,3.5} {
    	    \vertex{\i, 2}
    	    }
    \end{tikzpicture}&
        \begin{tikzpicture}
        \draw (-0.7,2.2) node[anchor=east] {$G$};
        \path[draw] (0,0) -- +(3, 0);
    	\path[draw] (-1,1) -- +(5, 0);
        \path[draw] (-0.5,2) -- +(4,0);
        \draw [dashed] (0,0) -- (-0.5,2);
        \draw [dashed] (3,0) -- (3.5,2);
        \draw (0,0) -- (-1,1);
        \draw (-1,1) -- (-0.5,2);
        \draw (3.5,2) -- (4,1);
        \draw (3,0) -- (4,1);
    \divisor{1, 1}
    \divisor{0,0}
    \divisor{-0.5,2}
    \filldraw[blue](3, 0)circle (1pt);
    \filldraw[blue](4,1)circle (1pt);
    \filldraw[blue](2.5,2)circle (1pt);
    \filldraw[verde](3.5, 2)circle (1pt);
    \filldraw[verde](3.55,1.9)circle (1pt);
    \filldraw[verde](3.22,0.9)circle (1pt);
    \filldraw[orange](-1,1)circle (1pt);
    \filldraw[orange](-0.2,1)circle (1pt);
    \filldraw[orange](-0.8,0.8)circle (1pt);
    \end{tikzpicture}\\
    \begin{tikzpicture}
   \draw (0.3,1.2) node[anchor=east] {$G_0$};
    \path[draw] (1,1) -- +(3, 0);
    \path[draw] (0,0) -- +(5, 0);
    \path[draw] (1,-1) -- +(3,0);
    \draw (4,1) -- (5, 0);
    \draw (4,-1) -- (5, 0);
    \draw [dashed] (4,1) -- (4,-1);
    \draw [dashed] (1,1) -- (1,-1);
    \draw (0,0) -- (1,1);
    \draw (0,0) -- (1,-1);
        \vertex{1.4,0}
        \vertex{0,0}
        \vertex{1,1}
        \vertex{1,-1}
        \vertex{4,1}
        \vertex{4,-1}
        \vertex{5,0}
        \draw (1.4,0) -- (1,1);
        \draw (1.4,0) -- (1,-1);
    \end{tikzpicture}&
    \begin{tikzpicture}
    \draw (0.3,1.2) node[anchor=east] {$G$};
    \path[draw] (1,1) -- +(3, 0);
    \path[draw] (0,0) -- +(5, 0);
    \path[draw] (1,-1) -- +(3,0);
    \draw (4,1) -- (5, 0);
    \draw (4,-1) -- (5, 0);
    \draw [dashed] (4,1) -- (4,-1);
    \draw [dashed] (1,1) -- (1,-1);
    \draw (0,0) -- (1,1);
    \draw (0,0) -- (1,-1);
        \vertex{1.4,0}
    \draw (0,0) node[anchor=east] {$3$};
    \filldraw[orange](0,0) circle (1pt);
        \draw (1.4,0) -- (1,1);
        \draw (1.4,0) -- (1,-1);
    \divisor{1, 1}
    \divisor{1.4,0}
    \divisor{1,-1}
    \filldraw[blue](4, 1)circle (1pt);
    \filldraw[blue](4.4,0)circle (1pt);
    \filldraw[blue](4,-1)circle (1pt);
    \filldraw[verde](4.45, 0.55)circle (1pt);
    \filldraw[verde](5,0)circle (1pt);
    \filldraw[verde](4.45,-0.55)circle (1pt);
    \end{tikzpicture}
\end{tikzcd}\caption{Examples of refinements $G$ of $G_0$, defined by all admissible representatives of $D\in W_3^1(\Gamma).$ In both cases, the divisor $D$ is defined by red points, and the admissible representatives by triples, with multiplicity, of the same color.}
\end{figure}\label{fig:refin}

For any admissible representative $D_x,$ we define a vertex $t_x \in V(T_D)$ and set
$\varphi_D(x)=\varphi_D(x_1)=\varphi_D(x_2)=t_x.$
From Remark~\ref{rk:disj_supp}, the supports of distinct admissible representatives are disjoint, therefore the map $\varphi_D|_{V(G_D)}:V(G_D)\to V(T_D)$ is well defined and $|V(T_D)|=|A_D|.$

In order to define the edges of the tree and the morphism over them, we need the following definition. 
\begin{defin}\label{def:consecutive}
Let $D_x,D_y\in A_D,$ such that $D_x\neq D_y.$ We say that they are \textbf{consecutive} if there exists an edge in $E(G_D)$ with an endpoint in $\operatorname{Supp}(D_x)$ and the other in $\operatorname{Supp}(D_y).$
\end{defin}

For any pair of vertices $t_x,t_y\in V(T_D)$, we set $t_xt_y\in E(T_D)$ if and only if the corresponding admissible representatives $D_x$ and $D_y$ are consecutive.
This defines the graph $T_D.$ We will later prove that $T_D$ is a tree.

Finally, let us construct the morphism $\varphi_D$ over the edges of $G_D.$
First of all, we set $\varphi_D(e)=t\in V(T_D)$ for any edge that has both endpoints in the support of the same admissible representative, which is sent to $t$ via $\varphi|_{V(G_D)}$.
For any pair of consecutive admissible representatives $D_x,D_y$, we set $\varphi_D(e)=t_xt_y$ for any $e\in E(G_D)$ such that $e$ has an endpoint in $\operatorname{Supp}(D_x)$ and the other in $\operatorname{Supp}(D_y).$
By construction the morphism $\varphi_D$ is a well defined morphism of graphs.

The fact that $\varphi_D$ is non-degenerate follows immediately from the following result.

\begin{lemm}\label{lm:existence}
    For any $x\in V(G_0),$ there exists $y\in V(G_0)$ such that $D_x,D_y$ are consecutive admissible representatives for the same divisor $D.$
\end{lemm}
\begin{proof}
If $D_x$ were not consecutive to any $D_y$, then
the subgraph of $G_D$ with vertices in $\operatorname{Supp}(D_x)$ would be a union of connected components of $G_D$, which is absurd.
\end{proof}

\begin{lemm}\label{lm:3_cut}
    Let $D_x,$ $D_y$ be two consecutive admissible representative of $D.$
    There are exactly  three distinct edges with an endpoint in $\operatorname{Supp}(D_x)$ and the other in $\operatorname{Supp}(D_y).$ Moreover, the three edges form a $3$-edge cut and have the same length in $(G_D,l_D).$
\end{lemm}

The proof relies on the following generalization of~\cite[Lemma 3.1. (i)]{MC}, of which we will not include a proof since it would follow word by word from the proof in loc.~cit.
\begin{lemm}\label{lm:div_3cut}
    Let $\Gamma$ be a $3$-edge connected metric graph and let $f$ be a rational function on $\Gamma$ such that 
    $$\operatorname{div}(f)=x_1+x_2+x_3-y_1-y_2-y_3,$$
    with $\{x_i\}_{i=1,\dots,3}\subset \Gamma,$ $\{y_i\}_{i=1,\dots,3}\subset \Gamma,$ such that $\{x_i\}_{i=1,\dots,3}\cap \{y_i\}_{i=1,\dots,3}=\emptyset.$
    Let~$\mathbf m,\mathbf M$ denote the sets of points in $\Gamma$ over which $f$ is minimized, maximized, respectively, and by $\partial\mathbf m,\partial\mathbf M$ their boundaries.
    Then $\partial\mathbf m=\{x_i\}_{i=1,\dots,3}$, $\partial\mathbf M=\{y_i\}_{i=1,\dots,3}$ and there are precisely three edges leaving $\partial\mathbf m,$ one at each $x_i,$ and precisely three edges leaving $\partial\mathbf M,$ one at each $y_i.$  
\end{lemm}
\begin{proof}[Proof of Lemma~\ref{lm:3_cut}]
    Write $D_x=x+x_1+x_2$ and $D_y=y+y_1+y_2.$
    From the linear equivalence $D_x\sim D_y,$ there exists a rational function 
    $f$ such that $\operatorname{div}(f)=x+x_1+x_2-y-y_1-y_2,$ where the sets $\{x,x_1,x_2\}, \{y,y_1,y_2\}$ are disjoint from Remark~\ref{rk:disj_supp}. Since $D_x,D_y$ are consecutive, the three edges leaving $\partial\mathbf{m}$, defined as in Lemma~\ref{lm:div_3cut}, are precisely the three edges leaving $\partial\mathbf{M}.$
    These three edges have to be a $3$-edge cut: they are the unique edges over which $f$ is non-constant therefore their removal has to disconnect the graph. Finally, they also have to have equal length because $f$ is continuous. 
\end{proof}

\begin{rema}\label{rk:multiplicity}
    Let $D_x=x+x_1+x_2$ be an admissible divisor, and consider the $3$-edge cut defined by any other admissible divisor that is consecutive to $D_x.$ It is a consequence of Lemmas~\ref{lm:existence},~\ref{lm:3_cut} and ~\ref{lm:div_3cut} that the number of edges in such a $3$-edge cut and incident to $z\in \operatorname{Supp}(D_x)$ is equal to $D_x(z).$
\end{rema}

We assign to any edge $e$ not contracted via $\varphi_D$  the index $\mu_{\varphi}(e)=1.$ 
We also set $l_{T_D}(\varphi(e))=l_D(e)$ for any $e\in E(G_D)$ such that $\varphi (e)\in E(T_D)$. Then by Lemma~\ref{lm:3_cut} we have that for any $e'\in E(T_D)$ the edges in $\varphi^{-1}(e')$ are precisely three and they have same length in $\Gamma.$ This means that the relations~\eqref{index} are always satisfied and we have an induced morphism $\tilde{\varphi}_D:(G_D,l_D)\to(T_D,l_{T_D}).$

Let us now conclude with the proof of Theorem~\ref{th:Main_Theo}.
\begin{proof}[Proof of Theorem~\ref{th:Main_Theo}, $B.\Rightarrow A$ ]
Let $D\in W_3^1(\Gamma)$, where $W_3^1(\Gamma)=W_3^1(\Gamma)\setminus W_3^2(\Gamma)$ by Remark~\ref{rk:rank1}.

We use the above construction to construct a refinement $(G_D,l_D)$ of $(G_0,l_0)$ and a morphism of graphs $\varphi_D: G_D\to T_D$ which induces a morphism of metric graphs ${\varphi}_D:(G_D,l_D)\to(T_D,l_{T_D}).$ By abuse of notation we use the same notation for the induced morphism.
Let us also recall that, as a consequence of Lemma~\ref{lm:existence}, $\varphi_D$ is non-degenerate.

What is left to prove is that the graph $T_D$ is a tree and that ${\varphi}_D$ is harmonic of degree $3$.
By construction, $T_D=\varphi_D (G_D)$ and hence it is connected since $G_D$ is. Recall also that a graph is a tree if and only if any of its edge is a bridge. By construction any $e\in E(T_D)$ is such that the edges in $\varphi_D^{-1}(e)$ form a $3$-edge cut and therefore the removal of $e$ must disconnect $T_D.$

By definition, the morphism $\varphi_D$ is harmonic if, for any $x\in V(G_D),$
$$m_{\varphi_D}(x)=\sum_{\substack{e\in E_x(G)\\ \varphi_D(e)=e'}}\mu_{\varphi_D}(e)$$ is constant for any $e'\in E_{t}(T_D),$ where $t=\varphi_D(x)$.

Since by construction we have that $\mu_{\varphi_D}(e)=1$ for $e$ such that $\varphi_D(e)\in E(T_D),$ 
then harmonicity follows if we prove instead that the quantity 
$$|\{e\in E_x(G_D)|\,\varphi_D(e)=e'\}|$$
is constant for any $e'\in E_t(T_D).$

This is a consequence of Lemma~\ref{lm:3_cut}. In fact, from the construction of $\varphi_D,$ the quantity 
$|\{e\in E_x(G_D)|\,\varphi_D(e)=e'\}|$ is equal to the number of edges in the $3$-cut defined by any admissible representative consecutive to $D_{x'},$ where $D_{x'}$ is the unique admissible divisor such that $x\in\operatorname{Supp}(D_{x'}).$ From Remark~\ref{rk:multiplicity}, the number of edges is simply the multiplicity of $x$ in $D_{x'},$ which does not depend on $e'\in E_t(T_D).$

Clearly, the harmonic morphism will have degree $3$.
\end{proof}

Let us observe that if one considers a loopless graph which is $k$-edge connected and divisorially $k$-gonal for any $k\geq 3$, then one can repeat the above construction step by step. This yields the following.
\begin{coro}\label{}
    Let $\Gamma$ be a loopless $k\mbox{-}$edge connected metric graph with canonical model $(G_0,l_0).$ The following are equivalent.
    \begin{itemize}
        \item[A.] $|V(G_0)|=2,3,\dots,k$ or $\Gamma$ is strictly $k$-gonal.
        \item[B.]  $\Gamma$ is divisorially $k$-gonal.
    \end{itemize}
\end{coro}

\subsection{The case with loops}\label{sc:loops}

Let us now consider a metric graph $\Gamma$ which is still $3$-edge connected, but possibly with loops.

Given $D\in W_3^1(\Gamma),$ we would like to extend the construction of the non-degenerate harmonic morphism $\varphi_D$ in the previous subsection to a metric graph with loops.

In the hyperelliptic case in~\cite{MC}, the two halves of a loop, in the canonical loopless model, are sent to the same edge.
The same construction would not work in our case since this would yield a morphism of degree $2$ over the edges in the canonical loopless model defined by a loop, and of degree $3$ elsewhere, as in Figure~\ref{fig:loop1}. 
\begin{figure}[ht]
\begin{tikzcd}
\begin{tikzpicture}
        \draw (-0.2,1.2) node[anchor=east] {$G_{-}$};
        \path[draw][red] (-0.24,0) -- (2, 0);
    	\path[draw][red] (0,-1) -- (2, 0);
        \path[draw][red] (0,1) -- (2,0);
        \draw[dashed] (0,-1)--(0,1);
        \draw (0,-1) -- (-0.24,0);
        \draw (0,1) -- (-0.24,0);
        \draw[verde] (2,0) to [out=140, in=180] (2,1);
        \draw[verde] (2,0) to [out=40, in=360] (2,1);
	  \path[draw][blue] (4.42,0) -- (2, 0);
    	\path[draw][blue] (4.2,-1) -- (2, 0);
        \path[draw][blue] (4.2,1) -- (2,0);
        \draw [dashed] (4.2,1) -- (4.2,-1);
        \draw (4.2,-1) -- (4.42,0);
        \draw (4.2,1) -- (4.42,0);
    \vertex{2,1}
    \vertex{2,0}
    \vertex{-0.24,0}
    \vertex{0,1}
    \vertex{0,-1}
    \vertex{4.42,0}
    \vertex{4.2,-1}
    \vertex{4.2,1}
    \end{tikzpicture}\arrow[d,"\varphi"]\\
    \begin{tikzpicture}
    \draw (-0.44,0.2) node[anchor=east] {$T$};
    \draw[red](-0.24,0) -- (2, 0);
    \draw (0.88,0) node[anchor=north] {$e_1$};
    \draw (3.21,0) node[anchor=north] {$e_2$};
    \draw (2.5,0.5) node[anchor=east] {$e_3$};
    \draw[blue](4.42,0) -- (2, 0);
    \draw[verde] (2, 0)--(2.7,0.7);
    \vertex{2,0}
    \vertex{2.7,0.7}
    \vertex{-0.24,0}   
    \vertex{4.42,0}
    \end{tikzpicture}
\end{tikzcd}\caption{The degree of the morphism is not well-defined: we have   $\sum_{\protect\substack{e\in E(G_{-})\\ \varphi(e)=e_i}}\mu_{\varphi}(e)=3$ for $i=1,2$ but $\sum_{\protect\substack{e\in E(G_{-})\\ \varphi(e)=e_3}}\mu_{\varphi}(e)=2$.}\label{fig:loop1}
\end{figure}
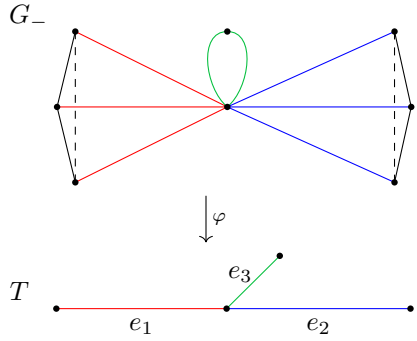

In order to get a morphism which has degree $3$ over the edges obtained from a loop, one could define instead a tropical modification, which is obtained from the canonical model by adding a vertex in the interior of a loop $e$, but such that the vertex divides the loop edge into two edges of lengths $\frac{l_0(e)}{3}$ and $\frac{2l_0(e)}{3}$. Unfortunately, this solution also does not work since the resulting morphism may not always be harmonic. An example is represented in Figure~\ref{fig:loop2}.

\begin{figure}[ht]
\begin{tikzcd}
\begin{tikzpicture}
        \draw (-1.2,1.2) node[anchor=east] {$G'$};
        \path[draw][red] (-1,1) -- (1, 0.5);
        \path[draw][red] (-1,0) -- (1, 0.5);
    	\path[draw][red] (-1.12,-0.7) -- (1, -0.7);
        \path[draw][blue] (1,-0.7) -- (3.6, -0.7);
        \path[draw][blue] (1,0.5) -- (3.5,1);
        \path[draw][blue] (1,0.5) -- (3.5,0);
        \draw (-1,1) -- (-1,0);
        \draw (-1,0) -- (-1.12,-0.7);
        \begin{scope}
		\clip (-1.5,0) rectangle ++(0.5,1);
		\draw (-1,0) ellipse (0.3 and 1);
	  \end{scope}
        \begin{scope}
		\clip (-1.5,-0.7) rectangle ++(0.38,0.7);
		\draw (-1.12,0) ellipse (0.18 and 0.7);
	  \end{scope}
        \begin{scope}
		\clip (3.5,0) rectangle ++(0.5,1);
		\draw (3.5,0) ellipse (0.3 and 1);
	  \end{scope}
        \begin{scope}
		\clip (3.6,-0.7) rectangle ++(0.38,0.7);
		\draw (3.6,0) ellipse (0.2 and 0.7);
	  \end{scope}
        \draw[verde] (1,0.5) to [out=140, in=180] (1,1.5);
        \draw[verde] (1,0.5) to [out=40, in=360] (1,1.5);
        \begin{scope}
		  \clip (0.7,0.5) rectangle ++(0.3,0.7);
		  \draw[thick,verde] (1,0.5) to [out=140, in=180] (1,1.5);
	  \end{scope}
        \draw (1.2,1.2) node[anchor=west] {$1$};
        \draw (0.8,0.8) node[anchor=east] {$2$};
        \draw (1,0.5) node[anchor=north] {$x$};
        \draw (3.5,1) -- (3.5,0);
        \draw (3.5,0) -- (3.6, -0.7);
    \vertex{-1,1}
    \vertex{-1,0}
    \vertex{1,0.5}
    \vertex{-1.12,-0.7}
    \vertex{1,-0.7}
    \vertex{3.5,1}
    \vertex{3.5,0}
    \vertex{3.6, -0.7}
    \vertex{0.72,1.2}
    \end{tikzpicture}\arrow[d,"\varphi"]\\
    \begin{tikzpicture}
    \draw (-0.32,0.2) node[anchor=east] {$T$};
    \draw[red](-0.12,0) -- (2, 0);
    \draw[blue](4.6,0) -- (2, 0);
    \draw[verde] (2, 0)--(3,1);
    \vertex{2,0}
    \vertex{3,1}
    \vertex{-0.12,0}   
    \vertex{4.6,0}\draw (0.88,0) node[anchor=north] {$e_1$};
    \draw (3.21,0) node[anchor=north] {$e_2$};
    \draw (2.5,0.5) node[anchor=east] {$e_3$};
    \end{tikzpicture}\end{tikzcd}\caption{We have $\sum_{e;\varphi(e)=e_3}\mu_{\varphi}(e)=3$ and $\sum_{e;\varphi(e)=e_1}\mu_{\varphi}(e)=\sum_{e;\varphi(e)=e_2}\mu_{\varphi}(e)=2$.}\label{fig:loop2}
\end{figure}

We will prove in this subsection that, in order to extend $\varphi_D$ over the possible loops of the graph to a morphism which is still non-degenerate and harmonic, we will have to consider a tropical modification which is obtained from the canonical loopless model by adding a leaf, for each loop, at a point that will be determined by $D$.
An example is shown in Figure~\ref{fig:loop3}.

\begin{figure}[ht]
\begin{tikzcd}
\begin{tikzpicture}
        \draw (-1.2,1.2) node[anchor=east] {$G'$};
        \path[draw][red] (-1,1) -- (1, 0.5);
        \path[draw][red] (-1,0) -- (1, 0.5);
    	\path[draw][red] (-1.12,-0.7) -- (1, -0.7);
        \path[draw][blue] (1,-0.7) -- (3.6, -0.7);
        \path[draw][blue] (1,0.5) -- (3.5,1);
        \path[draw][blue] (1,0.5) -- (3.5,0);
        \draw (-1,1) -- (-1,0);
        \draw (-1,0) -- (-1.12,-0.7);
        \begin{scope}
		\clip (3.5,0) rectangle ++(0.5,1);
		\draw (3.5,0) ellipse (0.3 and 1);
	  \end{scope}
        \begin{scope}
		\clip (3.6,-0.7) rectangle ++(0.38,0.7);
		\draw (3.6,0) ellipse (0.2 and 0.7);
	  \end{scope}
        \begin{scope}
		\clip (-1.5,0) rectangle ++(0.5,1);
		\draw (-1,0) ellipse (0.3 and 1);
	  \end{scope}
        \begin{scope}
		\clip (-1.5,-0.7) rectangle ++(0.38,0.7);
		\draw (-1.12,0) ellipse (0.18 and 0.7);
	  \end{scope}
        \draw[verde] (1,0.5) to [out=140, in=180] (1,1.5);
        \draw[verde] (1,0.5) to [out=40, in=360] (1,1.5);
        \draw (3.5,1) -- (3.5,0);
        \draw (3.5,0) -- (3.6, -0.7);
        \draw[verde] (1, -0.7)--(1.7,0);
    \vertex{-1,1}
    \vertex{-1,0}
    \vertex{1,0.5}
    \vertex{-1.12,-0.7}
    \vertex{1,-0.7}
    \vertex{3.5,1}
    \vertex{3.5,0}
    \vertex{3.6, -0.7}
    \vertex{1,1.5}
    \vertex{1.7,0}\end{tikzpicture}\arrow[d,"\varphi"]\\
    \begin{tikzpicture}
    \draw (-0.32,0.2) node[anchor=east] {$T$};
    \draw[red](-0.12,0) -- (2, 0);
    \draw[blue](4.6,0) -- (2, 0);
    \draw[verde] (2, 0)--(2.7,0.7);
    \vertex{2,0}
    \vertex{2.7,0.7}
    \vertex{-0.12,0}   
    \vertex{4.6,0}
    \end{tikzpicture}\end{tikzcd}\caption{A non-degenerate harmonic morphism of degree $3$.}\label{fig:loop3}
\end{figure}
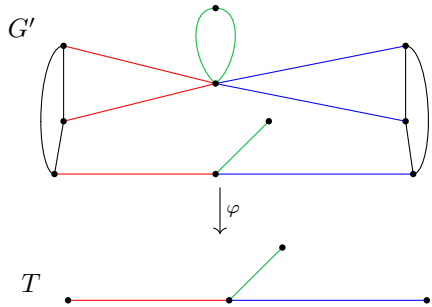

\begin{rema}\label{rk:supp_loop}
Let $D\in W_3^1(\Gamma)$ and denote by $\Gamma^\circ\subset\Gamma$ the metric sub-graph obtained by removing the loops in $\Gamma.$ Then we can always find $D'\in W_3^1(\Gamma)$ such that $D'\sim D$ and $\operatorname{Supp}(D')\subset \Gamma^\circ.$

In fact, take $D'$ admissible for $D$, then its support contains at most one point in the interior of a loop $e.$ If by contradiction there is precisely one point in $\operatorname{Supp}(D'),$ contained in the interior of $e$, we start Dhar's algorithm from any vertex of valence at least $3,$ not contained in $\operatorname{Supp}(D').$ The fire would either burn the other two points in the support and then the whole graph, or it can be controlled at the endpoint of~$e$, in at most 2 directions by the two other points in the support of $D'$.
However, by $3$-edge connectivity the valence at the endpoint of $e$ is at least $5,$ which means that the graph would burn also in this case.
\end{rema}

\begin{lemm}\label{lm:mult_loop}
Let $D\in W_3^1(\Gamma)$ and let $x$ be the endpoint of a loop $e$ in $\Gamma.$ 
Then $D$ is linearly equivalent to an admissible divisor $D_x= 2x+y$ for some $y\in \Gamma\setminus e.$
\end{lemm}
\begin{proof}
As observed in the previous remark, since the valence at $x$ is at least $5,$ then $x\in V(G_0),$ with $(G_0,l_0)$ the canonical model of $\Gamma.$ 

If by contradiction $D_x=x+y+z$ with $y,z\neq x$ and $y,z\not\in e$ from the above remark, then, starting Dhar's algorithm from any point in the interior of the loop, would burn $x$ and then the whole graph again by $3$-edge connectivity.
\end{proof}

\begin{lemm}\label{lm:vtx}
    Let $\Gamma$ be a metric graph with canonical loopless model $(G_{-},l_{-})$ such that $|V(G_{-})|>3.$
    Denote by $\Gamma^\circ$ the metric graph obtained from $\Gamma$ removing all loops and by $(G^{\circ},l^{\circ})$ its canonical model.

    If $\Gamma$ is divisorially trigonal and $|V(G^\circ)|\in \{2,3\},$ then $\Gamma^\circ$ is trigonal.
\end{lemm}

\begin{proof}
Let us first observe that $|V(G_{-})|=|V(G^\circ)|+k,$ with $k$ the number of loops in $\Gamma.$

Consider first the case $|V(G^\circ)|=2,$ then since $|V(G_{-})|>3,$ there are at least 2 loops in $\Gamma.$ 

Let $D\in W_3^1(\Gamma),$ then, by Remark~\ref{rk:supp_loop}, there exists $D'\in W_3^1(\Gamma^\circ)$ which by Lemma~\ref{lm:mult_loop} is such that $D'\sim 2x+y$ for any $x$ which is an endpoint of a loop in $\Gamma$ and $y\in \Gamma^\circ.$

By $3$-edge connectivity and using Dhar's burning algorithm, it is easy to see that all divisorially trigonal metric graphs $\Gamma$ with $|V(G^\circ)|=2$ have one of the form represented in Figure~\ref{fg:vtx2}, where all edges of the same color have same length. The morphism from $\Gamma^\circ$ sending these edges to a tree consisting of an edge of same length is clearly non-degenerate and harmonic of degree $3.$

\begin{figure}[ht]
\begin{tikzcd}
\begin{tikzpicture}
    \draw(0,0) to [out=170, in=190] (-0.5,0.5);
    \draw(0,0) to [out=95, in=10] (-0.5,0.5);
    \draw(0,0) to [out=10, in=350] (0.5,0.5);
    \draw(0,0) to [out=85, in=170] (0.5,0.5);
    \draw(0,0)[blue] to [out=240, in=120] (0,-1);
    \draw(0,0)[blue] to [out=300, in=60] (0,-1);
    \draw[blue](0,0)--(0,-1);
    \vertex{0,0}\vertex{0,-1}
    \end{tikzpicture}&
    \begin{tikzpicture}
        \draw(0,0) to [out=170, in=190] (-0.5,0.5);
    \draw(0,0) to [out=95, in=10] (-0.5,0.5);
    \draw(0,0) to [out=10, in=350] (0.5,0.5);
    \draw(0,0) to [out=85, in=170] (0.5,0.5);
    \draw(0,0)[blue] to [out=240, in=120] (0,-1);
    \draw(0,0)[blue] to [out=330, in=50] (0.2,-0.8);
    \draw(0,-1) to [out=30, in=50] (0.2,-0.8);
    \draw[blue](0,0)--(0,-1);
    \vertex{0,0}\vertex{0,-1}\vertex{0.2,-0.8}
    \end{tikzpicture}&\begin{tikzpicture}
    \draw(0,0) to [out=150, in=180] (0,0.5);
    \draw(0,0) to [out=30, in=0] (0,0.5);
    \draw(0,-1) to [out=330, in=0] (0,-1.5);
    \draw(0,-1) to [out=210, in=180] (0,-1.5);
    \draw(0,0)[blue] to [out=240, in=120] (0,-1);
    \draw(0,0)[blue] to [out=300, in=60] (0,-1);
    \draw[blue](0,0)--(0,-1);
    \vertex{0,0}\vertex{0,-1}
    \end{tikzpicture}\end{tikzcd}
    \caption{All possible divisorially trigonal 
 and $3$-edge connected metric graphs with $2$ loops and 2 vertices in the canonical model. Graphs with more loops are obtained by adding as many loops at vertices of valence at least $3$. In the left picture, if no loops are added in the vertex in the bottom, the three edges are also allowed to have arbitrary lengths.}\label{fg:vtx2}
\end{figure}

If instead $|V(G^\circ)|=3$ then there is at least a loop in $\Gamma.$ 

As in the previous case, given $D\in W_3^1(\Gamma),$ then we consider $D'\in W_3^1(\Gamma^\circ)$ such that $D'\sim 2x+y$ for any $x$ which is an endpoint of a loop in $\Gamma$ and $y\in \Gamma^\circ.$

Similarly, by $3$-edge connectivity and using Dhar's burning algorithm, it is easy to see that all metric graphs $\Gamma$ with $|V(G^\circ)|=3$ are all represented in Figure~\ref{fg:vtx3}. Here all edges of the same color still have same length and the morphism sending them to an edge of same length is again non-degenerate, harmonic of degree $3.$
\end{proof}

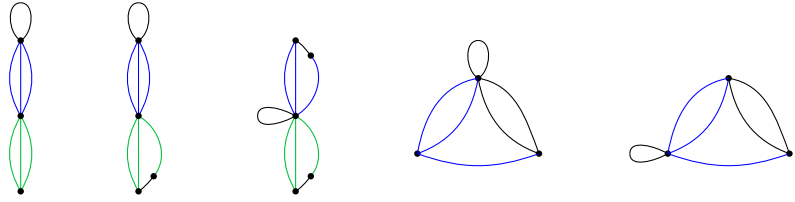
\begin{figure}[ht]
\begin{tikzcd}
\begin{tikzpicture}
    \draw(0,0) to [out=150, in=180] (0,0.5);
    \draw(0,0) to [out=30, in=0] (0,0.5);
    \draw(0,0)[blue] to [out=240, in=120] (0,-1);
    \draw(0,0)[blue] to [out=300, in=60] (0,-1);
    \draw[blue](0,0)--(0,-1);
    \draw(0,-1)[verde] to [out=240, in=120] (0,-2);
    \draw(0,-1)[verde] to [out=300, in=60] (0,-2);
    \draw[verde](0,-1)--(0,-2);
    \vertex{0,0}\vertex{0,-1}\vertex{0,-2}
    \end{tikzpicture}&
    \begin{tikzpicture}
    \draw(0,0) to [out=150, in=180] (0,0.5);
    \draw(0,0) to [out=30, in=0] (0,0.5);
    \draw(0,0)[blue] to [out=240, in=120] (0,-1);
    \draw(0,0)[blue] to [out=300, in=60] (0,-1);
    \draw[blue](0,0)--(0,-1);
    \draw[verde](0,-1)--(0,-2);
    \draw(0,-1)[verde] to [out=240, in=120] (0,-2);
    \draw(0,-1)[verde] to [out=330, in=50] (0.2,-1.8);
    \draw(0,-2) to [out=30, in=50] (0.2,-1.8);
\vertex{0,0}\vertex{0,-1}\vertex{0.2,-1.8}\vertex{0,-2}
    \end{tikzpicture}&\begin{tikzpicture}
    \draw(0,0)[blue] to [out=240, in=120] (0,-1);
    \draw(0,0) to [out=330, in=130] (0.2,-0.2);
\draw(0,-1)[blue] to [out=30, in=310] (0.2,-0.2);
    \draw[blue](0,0)--(0,-1);
    \draw[verde](0,-1)--(0,-2);
    \draw(0,-1)[verde] to [out=240, in=120] (0,-2);
    \draw(0,-1)[verde] to [out=330, in=50] (0.2,-1.8);
    \draw(0,-2) to [out=30, in=50] (0.2,-1.8);
    \draw(-0.5,-1) to [out=90, in=150] (0,-1);
    \draw(-0.5,-1) to [out=270, in=210] (0,-1);
    \vertex{0,0}\vertex{0,-1}\vertex{0.2,-1.8}\vertex{0,-2}\vertex{0.2,-0.2}
    \end{tikzpicture}&\begin{tikzpicture}
    \draw(0.8,-0.5) to [out=150, in=180] (0.8,0);
    \draw(0.8,-0.5) to [out=30, in=0] (0.8,0);
    \vertex{0,-1.5}\vertex{0.8,-0.5}\vertex{1.6,-1.5}
    \draw[blue] (0,-1.5) to [out=340, in=200] (1.6,-1.5);
    \draw[blue] (0,-1.5) to [out=80, in=190] (0.8,-0.5);
    \draw[blue] (0,-1.5) to [out=20, in=260] (0.8,-0.5);
    \draw[] (1.6,-1.5) to [out=110, in=350] (0.8,-0.5);
    \draw[] (1.6,-1.5) to [out=160, in=280] (0.8,-0.5);
    \end{tikzpicture}
    &\begin{tikzpicture}
    \draw(-0.5,-1.5) to [out=90, in=150] (0,-1.5);
    \draw(-0.5,-1.5) to [out=270, in=210] (0,-1.5);    \vertex{0,-1.5}\vertex{0.8,-0.5}\vertex{1.6,-1.5}
    \draw[blue] (0,-1.5) to [out=340, in=200] (1.6,-1.5);
    \draw[blue] (0,-1.5) to [out=80, in=190] (0.8,-0.5);
    \draw[blue] (0,-1.5) to [out=20, in=260] (0.8,-0.5);
    \draw[] (1.6,-1.5) to [out=110, in=350] (0.8,-0.5);
    \draw[] (1.6,-1.5) to [out=160, in=280] (0.8,-0.5);
    \end{tikzpicture}
    \end{tikzcd}
    \caption{All possible divisorially trigonal 
 and $3$-edge connected metric graphs with a loop and 3 vertices in the canonical model. The graphs with more loops are obtained by adding as many loops at the vertices incident to triples of edges of the same color, in the first three graphs on the left, or by adding loops at the vertices incident to (at least) two edges of the same color, in the two graphs on the right.}\label{fg:vtx3}
\end{figure}

The statement of Theorem~\ref{th:Main_Theo} can now be generalized by dropping the assumption on the loops.
\begin{theo}\label{th:Main_Theo_loops}
    Let $\Gamma$ be a $3\mbox{-}$edge connected metric graph with canonical loopless model $(G_{-},l_{-}).$ The following are equivalent.
    \begin{itemize}
        \item[A.] $|V(G_{-})|=2,3$ or $\Gamma$ is trigonal.
        \item[B.]  $\Gamma$ is divisorially trigonal.
    \end{itemize}
\end{theo}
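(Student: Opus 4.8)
For $A\Rightarrow B$, if $\Gamma$ is trigonal then it is divisorially trigonal: this is exactly the implication already recorded after Lemma \ref{lm:Lemma_preim}, combining the pullback of a point (Lemma \ref{lm:Lemma_preim}) with the rank-preservation of Remark \ref{rk:tropmod}. If instead $|V(G_{-})|\in\{2,3\}$, I would exhibit a degree-$3$ rank-$1$ divisor directly, mirroring the $|V(G_0)|\in\{2,3\}$ computation in the proof of Theorem \ref{th:Main_Theo} and using the representatives $2x+y$ at loop-endpoints supplied by Lemma \ref{lm:mult_loop}. So the substance lies entirely in $B\Rightarrow A$.

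For $B\Rightarrow A$, assume $\Gamma$ is divisorially trigonal. If $|V(G_{-})|\le 3$ the first alternative of $A$ holds and there is nothing to prove, so suppose $|V(G_{-})|>3$ and let $\Gamma^\circ$ denote $\Gamma$ with all its loops removed, with canonical model $(G^\circ,l^\circ)$; then $|V(G_{-})|=|V(G^\circ)|+k$, where $k\ge 1$ is the number of loops. I would first observe that $\Gamma^\circ$ is loopless and again $3$-edge connected: a loop never contributes to the edge-connectivity between two distinct vertices, while $3$-edge connectivity forces every loop-endpoint to have non-loop valence at least $3$, so that $V(G^\circ)=V(G_0)$ and no vertex is lost. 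The key reduction is that $\Gamma^\circ$ is itself divisorially trigonal. Choosing $D'\sim D$ with $\operatorname{Supp}(D')\subset\Gamma^\circ$ as in Remark \ref{rk:supp_loop}, I claim that $\operatorname{rk}_{\Gamma^\circ}(D')\ge 1$. Indeed, were Dhar's algorithm from some $w\in\Gamma^\circ$ to burn all of $\Gamma^\circ$ for $D'$, then running the same fire on $\Gamma$ would burn $\Gamma^\circ$ identically---the loops carry none of $D'$, hence they neither raise the burning threshold at their basepoints nor obstruct the fire---after which each loop ignites from its already burnt basepoint, so that all of $\Gamma$ burns and $\operatorname{rk}_{\Gamma}(D')=-1$ by Remark \ref{rk:Dhar}, contradicting $D'\sim D\in W_3^1(\Gamma)$. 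Hence $\Gamma^\circ$ is divisorially trigonal, and Theorem \ref{th:Main_Theo} (when $|V(G^\circ)|>3$) or Lemma \ref{lm:vtx} (when $|V(G^\circ)|\in\{2,3\}$) yields a non-degenerate harmonic morphism $\varphi^\circ\colon\Gamma^\circ\to T$ of degree $3$ onto a metric tree, with no trees added.

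It remains to extend $\varphi^\circ$ across the loops, which I expect to be the main obstacle; the naive recipes fail, since sending both half-edges of a loop to a single existing edge, or subdividing it unevenly, breaks the degree or the harmonicity (Figures \ref{fig:loop1} and \ref{fig:loop2}). Fix a loop $\ell$ at $x$, write $t_x:=\varphi^\circ(x)$ and $m:=m_{\varphi^\circ}(x)$; the horizontal multiplicities over $t_x$ sum to $3$, and $m\ge 2$ (indeed $m=2$ when $\varphi^\circ$ is built from $D'$ via Theorem \ref{th:Main_Theo}, since the representative at $x$ is $2x+y$ by Lemma \ref{lm:mult_loop}, while for the finitely many types with $|V(G^\circ)|\in\{2,3\}$ in Figures \ref{fg:vtx2} and \ref{fg:vtx3} one checks $m\in\{2,3\}$ directly). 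I would attach to $T$ a new pendant edge $e_\ell$ at $t_x$ of length $l(\ell)/2$, split $\ell$ at the midpoint prescribed by $G_{-}$ and send its two halves to $e_\ell$ with index $1$, and then tropically modify $\Gamma$ by adding, all of index $1$, length $l(\ell)/2$ and mapping onto $e_\ell$: exactly $m-2$ leaves at $x$ and, at every other vertex $z$ with $\varphi^\circ(z)=t_x$, exactly $m_{\varphi^\circ}(z)$ leaves. The horizontal multiplicity over $e_\ell$ at each point of the fiber then equals its value for $\varphi^\circ$, so harmonicity persists at $t_x$, while the degree over $e_\ell$ equals $2+(m-2)+(3-m)=3$; the lengths are consistent by Remark \ref{rk:lengths}. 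Repeating at every loop produces a non-degenerate harmonic morphism of degree $3$ from a tropical modification of $\Gamma$ to a tree, the configuration of Figure \ref{fig:loop3}, so $\Gamma$ is trigonal by Definition \ref{def:metric_gonal}. Apart from the rank descent to $\Gamma^\circ$, the delicate point is exactly this simultaneous balancing of the leaves at $x$, forced by harmonicity, against the leaves at the remaining fiber points, forced by the degree.
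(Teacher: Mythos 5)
Your proof follows the same route as the paper's: reduce modulo loops via Remark \ref{rk:supp_loop}, run the loopless machinery (Theorem \ref{th:Main_Theo}, or Lemma \ref{lm:vtx} when $|V(G^\circ)|\in\{2,3\}$) on $\Gamma^\circ$, and then extend across each loop by subdividing it at the midpoint and attaching a pendant leaf of half the loop's length at the point dictated by the representative $2x+y$ of Lemma \ref{lm:mult_loop}, with a new pendant edge added to the tree. Your balancing $2+(m-2)+(3-m)=3$ is exactly the paper's harmonicity check, since the horizontal multiplicities over $t_x$ are the coefficients of the admissible representative $2x+y$ (Remark \ref{rk:multiplicity}), so in the generic case you add no leaf at $x$ and one leaf at $y$, which is Figure \ref{fig:loop3}.

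The one step to tighten is the descent of the rank to $\Gamma^\circ$. You correctly show that a single run of Dhar's algorithm on $(\Gamma^\circ,D')$ from any $w$ cannot burn all of $\Gamma^\circ$, but Remark \ref{rk:Dhar} is only a sufficient condition for $\operatorname{rk}<1$: a single run failing to burn everything says that $D'$ is not $w$-reduced, not that the $w$-reduced representative carries a chip at $w$, which is what $\operatorname{rk}_{\Gamma^\circ}(D')\geq 1$ requires. (Also, the contradiction you reach should be $\operatorname{rk}_\Gamma(D')<1$ rather than $=-1$, as $D'$ is effective.) The claim is true and repairable: given $p\in\Gamma^\circ$ and a rational function $f$ on $\Gamma$ with $D'-p-\operatorname{div}(f)\geq 0$, one has $\operatorname{div}(f)\leq 0$ on the interior of each loop, so $f$ restricted to a loop is concave and attains its minimum at the basepoint; replacing $f$ by the constant $f(x)$ on each loop yields a function that descends to $\Gamma^\circ$ and still exhibits $D'-p$ as equivalent to an effective divisor supported on $\Gamma^\circ$. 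Alternatively, one can avoid the descent altogether, as the paper implicitly does, by running the Section \ref{sc:loopless} construction with $D$ viewed as a divisor on $\Gamma$, all of whose admissible representatives lie in $\Gamma^\circ$ by Remark \ref{rk:supp_loop} and Lemma \ref{lm:mult_loop}.
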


\begin{proof}
The proof of $A.\Rightarrow B.$ again follows by Lemma~\ref{lm:Lemma_preim} if $|V(G_{-})|>3$.
If instead $|V(G_{-})|\leq3$, then define $D$ as in the proof of Theorem~\ref{th:Main_Theo} $A.\Rightarrow B.$ and then statement will follow.

To prove $B.\Rightarrow A.$, instead, 
let us consider $\Gamma^\circ$ the metric graph obtained from $\Gamma$ removing all loops.
If $\Gamma=\Gamma^\circ$ then the proof just follows from that of Theorem~\ref{th:Main_Theo}.

Otherwise, take $D\in W_3^1(\Gamma)$ and by Remark~\ref{rk:supp_loop} we may assume $\operatorname{Supp}(D)\subset\Gamma^\circ.$ Repeat the construction of $(G_D,l_D), (T_D,l_{T_D}),\varphi_D$ in Subsection~\ref{sc:loopless} over $\Gamma^\circ.$ 
Notice here from Lemma~\ref{lm:vtx} such a morphism exists even if the vertices in the canonical model of $\Gamma^\circ$ are less or equal than $3.$ 

Construct then the tropical modification $(G_D',l_D')$ of $\Gamma$ such that it coincides with 
$(G_D,l_D)$ over $\Gamma^\circ$ and over the loops it is obtained by placing a vertex at the midpoint of each loop and adding a leaf. The endpoints of such leaves are determined by the divisor $D.$

From Lemma~\ref{lm:mult_loop}, for any loop attached to a vertex $x,$ we have $D\sim 2x+y.$ Insert a leaf at $y$ and set the length of the leaf equal to half of that of the loop.

Define now $T_D'$ from $T_D$ by adding a leaf $t'$ for any added leaf $l$ in $G_D'$ and set $l_{T_D}'(t')=l_D'(l).$ Finally extend $\varphi_D$ over $l$ and the two edges in $G_D'$, obtained from the loop, sending them all to $t',$ and denote by $\varphi_D'$ this morphism.

Clearly $T_D'$ is still a tree since it has been obtained by adding leaves to a tree.

What is left to prove is that $\varphi_D':(G_D',l_D')\to(T_D',l_{T_D}')$ is again non-degenerate and harmonic of degree $3.$
The non-degeneracy follows by construction, since no contraction has been defined.

To prove that the morphism is again harmonic, it suffices to observe that, as $\varphi_D$, all edges not contracted have index $1$. Therefore, we need to check that for any $x\in V(G_{D}')$ the quantity $$|\{e\in E_x(G_D')|\,\varphi_D'(e)=e'\}|$$
is constant for any $e'\in E_{\varphi'_D(x)}(T_D'),$ and in particular for $x$ the endpoint of a loop or any added leaf.

If $x$ is the endpoint of a loop, from Lemma~\ref{lm:mult_loop}, we have $D\sim D_x=2x+y,$ with $D_x$ admissible. From the construction of the leaves, then we have that the above quantity is $2$ if $y\neq x,$ or $3$ if $x=y$. Thus it is the multiplicity of $x$ in $D_{x}$. If instead  we consider the endpoint of the added leaf the 
above quantity would be instead the multiplicity of $y$.
As in the case without loops, such multiplicities do not depend on $e'\in E_{\varphi'_D(x)}(T_D)$ and therefore the morphism is harmonic, of degree $3.$
\end{proof}

\section{Moduli of tropical trigonal curves and tropical trigonal covers}\label{sc:moduli}

Based on the results obtained in the preceding sections, we now aim to construct the moduli space of $3$-edge connected trigonal tropical curves. 
Our approach follows the lines of the construction of the moduli space of tropical hyperelliptic curves in~\cite{MC}, which we adapt to our situation.

In particular, given a trigonal abstract tropical curve, i.e. an abstract tropical curve $(G,w,l)$ whose underlying metric graph $(G^w,l)$ is divisorially trigonal, we want to associate to it some combinatorial data, and finally use it to construct the moduli space.

\begin{defin}\label{def:type}
    A $3$-edge connected \textbf{trigonal type} is a tuple $(G,w,\varphi)$ such that:
\begin{enumerate}
    \item $(G,w)$ is a $3$-edge connected stable graph,
    \item $\varphi:G_{\varphi}\rightarrow T$ is a morphism from a graph $G_{\varphi}$ with $(G_{\varphi})^{\operatorname{st}}=G^w$ to a tree $T$, with $\varphi$ non-degenerate harmonic  of degree $3$ and, for any $t\in V(T),$ $\varphi^{-1}(t)\cap V(G)\neq\emptyset$.
\end{enumerate}
Furthermore, for any $3$-edge connected trigonal type $(G,w,\varphi)$, we say that the pair $(G,w)$ is a $3$-edge connected \textbf{trigonal graph}.
\end{defin}

Notice that given a trigonal type $(G,w,\varphi)$, a refinement of $G$ would induce a refinement of $G_{\varphi}$ along with a non-degenerate harmonic of degree $3$ from it to a refinement of $T$.
However, such a refinement does not define a trigonal type because condition (2) would not be satisfied.

\begin{rema}\label{rk:rel_edges}
Let $(G,w,\varphi)$ be a $3$-edge connected trigonal type. The morphism $\varphi:G_{\varphi}\rightarrow T$ induces an equivalence relation $\sim_{\varphi}$ on $E(G_{\varphi}):$  we say that $e_1 \sim_{\varphi} e_2$ if and only if $e_1,e_2\in \varphi^{-1}(e)$ for $e\in E(T).$ 
\end{rema}

\begin{defin}
    Let $(G,w,\varphi)$ be a $3$-edge connected trigonal type. A \textbf{${\varphi}$-contraction} of $(G,w,\varphi)$ is the weighted contraction in $G_{\varphi}$ of a set of edges $S_{\varphi}$ which is the union of equivalence classes of $\sim_{\varphi}.$
\end{defin}

In other words, a $\varphi$-contraction of a $3$-edge connected trigonal type $(G,w,\varphi)$ is the weighted contraction of all edges in $G_{\varphi}$ sharing the same image via $\varphi,$ or the contraction of any edge in the pre-image of a vertex in the tree.

Let us now show that $3$-edge connected trigonal types are well-behaved with respect to $\varphi$-contractions.

\begin{prop}\label{prp:contr}
    Let $(G,w,\varphi)$ be a $3$-edge connected trigonal type, and let $\pi:G_{\varphi}\to G_{\varphi}/S_{\varphi}$ be a $\varphi$-contraction. 
    The morphism $\pi$ then determines a $3$-edge connected trigonal type $(\tilde G,\tilde w, \tilde \varphi)$ where $(G_{\varphi}/S_{\varphi})^{\operatorname{st}}=(\tilde G_{\tilde \varphi})^{\operatorname{st}}$.
\end{prop}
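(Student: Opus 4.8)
The plan is to first push the contraction $\pi$ forward to the tree, and then transport the defining properties of a trigonal type through it. Since $S_\varphi$ is a union of $\sim_\varphi$-classes, we have $S_\varphi=\varphi^{-1}(E_S)$ for a well-defined set $E_S\subseteq E(T)$; contracting these edges of the tree $T$ yields again a tree $T'=T/E_S$, and the weighted contraction $\pi\colon G_\varphi\to G_\varphi/S_\varphi$ sits over $T\to T'$, inducing a morphism of graphs $\tilde\varphi\colon G_\varphi/S_\varphi\to T'$ (here I assume the contraction is proper, i.e. $E_S\subsetneq E(T)$, so $T'$ keeps at least one edge). I would set $\tilde G_{\tilde\varphi}:=G_\varphi/S_\varphi$ and $(\tilde G,\tilde w):=(G_\varphi/S_\varphi)^{\operatorname{st}}$, so that the required identity $(G_\varphi/S_\varphi)^{\operatorname{st}}=(\tilde G_{\tilde\varphi})^{\operatorname{st}}$ holds by definition. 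Keeping the index $1$ on every non-contracted edge, the degree of $\tilde\varphi$ over an edge $e'\in E(T')$ equals the degree of $\varphi$ over the corresponding edge of $E(T)\setminus E_S$, since the preimage of an uncontracted tree-edge is untouched by $\pi$; hence $\deg\tilde\varphi=3$.

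The heart of the argument is harmonicity. Every vertex of $G_\varphi/S_\varphi$ is the image of a connected component $C$ of the subgraph $(V(G_\varphi),S_\varphi)$; write $\tilde v_C$ for it and let $S_j$ be the subtree of $T$ (a connected component of $(V(T),E_S)$) onto which $C$ maps, so that $\tilde\varphi(\tilde v_C)=t'$ collapses $S_j$. For $s\in V(S_j)$ put $N_C(s):=\sum_{v\in C,\ \varphi(v)=s}m_\varphi(v)$. The key local observation is that any edge of $S_\varphi$ meeting $C$ has both endpoints in $C$, because $C$ is a connected component for the relation of being joined by a contracted edge. Consequently, for an edge $\hat e_0=ss'$ of $S_j$, harmonicity of $\varphi$ gives $N_C(s)=\sum_{e\subseteq C,\ \varphi(e)=\hat e_0}\mu_\varphi(e)=N_C(s')$, so $N_C$ is constant on the connected subtree $S_j$; call this value $N_C$. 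Now an edge $e'\in E_{t'}(T')$ corresponds to the unique uncontracted edge $\hat e\in E(T)$ leaving $S_j$ at some vertex $s$, and the edges of $G_\varphi/S_\varphi$ leaving $\tilde v_C$ over $e'$ are precisely the edges of $G_\varphi$ leaving $C$ over $\hat e$; harmonicity of $\varphi$ at the vertices of $C$ over $s$ then yields $m_{\tilde\varphi}(\tilde v_C)=N_C(s)=N_C$, independent of the choice of $e'$. This proves $\tilde\varphi$ is harmonic. Non-degeneracy is a byproduct: since $\varphi$ is non-degenerate, $N_C(s)\ge 1$ whenever $C$ has a vertex over $s$, and constancy forces $N_C\ge 1$ (so in fact each such $C$ surjects onto $S_j$).

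It remains to check the graph-theoretic conditions on $(\tilde G,\tilde w)$ and the vertex condition. Stability holds by construction and the genus is preserved by weighted contraction. For $3$-edge connectivity I would use that weighted contraction cannot decrease edge connectivity, since any $k$-edge cut of $G/e$ lifts to a $k$-edge cut of $G$; combined with the $3$-edge connectivity of $G^w=(G_\varphi)^{\operatorname{st}}$ (loops do not affect edge connectivity) and the fact that contraction and passage to the stable model commute appropriately, this gives $3$-edge connectivity of $(G_\varphi/S_\varphi)^{\operatorname{st}}=\tilde G$. For the vertex condition $\tilde\varphi^{-1}(t')\cap V(\tilde G)\ne\emptyset$, I would transfer it from the hypothesis on $\varphi$: for each $t'$ collapsing a subtree $S_j$ (or fixing a vertex of $T$), the hypothesis supplies a vertex of $V(G)$ over some $s\in V(S_j)$, and a weighted contraction does not decrease its weight nor destroy its stability, so its image survives in $V(\tilde G)$ and lies over $t'$.

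The main obstacle is the harmonicity step, and more precisely making sure the local multiplicity $m_{\tilde\varphi}(\tilde v_C)$ is genuinely well-defined, i.e. that $N_C(s)$ does not depend on which boundary vertex $s$ of the collapsed subtree $S_j$ carries the outgoing tree-edge. This is exactly what the constancy of $N_C$ on $S_j$ provides, and it is forced by the interplay between harmonicity of $\varphi$ and the fact that contracted edges never run between distinct components. The remaining verifications (monotonicity of edge connectivity under contraction and the transfer of the vertex condition) are more routine, but I would state them carefully because they rely on the commutation of contraction with stabilization.
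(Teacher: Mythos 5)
Your harmonicity argument (constancy of $N_C$ along the collapsed subtree, forced by harmonicity of $\varphi$ together with the fact that contracted edges never join distinct components) is correct and in fact more uniform than the paper's case-by-case verification; for the sub-case it covers it is a genuine improvement. However, there are two gaps. First, your opening reduction $S_\varphi=\varphi^{-1}(E_S)$ with $E_S\subseteq E(T)$ silently discards half of the definition of a $\varphi$-contraction: by Remark \ref{rk:rel_edges} and the sentence following the definition, $S_\varphi$ may also be (a union of) classes of edges that $\varphi$ \emph{contracts}, i.e.\ fibers over vertices of $T$. These are exactly the first two cases of the paper's proof (a contracted loop edge, and a contracted non-loop edge together with its parallel edges), and your argument never addresses them; in particular the contraction of a non-loop edge $e=xy$ with $\varphi(e)=v\in V(T)$ merges $x$ and $y$ and turns every parallel edge into a loop, and this situation is not reached by contracting any $E_S\subseteq E(T)$.

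Second, and more substantively, setting $\tilde G_{\tilde\varphi}:=G_\varphi/S_\varphi$ makes the asserted identity $(G_\varphi/S_\varphi)^{\operatorname{st}}=(\tilde G_{\tilde\varphi})^{\operatorname{st}}$ tautological, but it misses the actual content of the proposition: the statement compares \emph{stable models} precisely because $\tilde G_{\tilde\varphi}$ is \emph{not} the quotient whenever the contraction creates loops. In that situation the paper enlarges the source by $k$ leaves (attached at $\pi(e)$ or at the third vertex $z$ over $v$, according to whether $m_\varphi(x)+m_\varphi(y)=3$ or $2$) and the target tree by $k$ leaf edges, and sends each new subdivided loop together with its auxiliary leaf onto a new tree leaf. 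This keeps the morphism in the normal form for loops established in Section \ref{sc:loops} (cf.\ Figures \ref{fig:loop1}--\ref{fig:loop3}), where each fiber over a tree edge genuinely consists of three index-one edges; your version instead leaves the new loops contracted by $\tilde\varphi$. Even granting that a morphism with contracted loops might formally satisfy Definition \ref{def:type}, your proof does not construct the object the proposition asserts, does not explain where the balancing choice between $\pi(e)$ and $z$ comes from, and would propagate an inconsistency into the later use of $\varphi$-contractions (the face structure of the cones $\tau_{(G,w,\varphi)}$ and Proposition \ref{prp:adm}), where loop lengths must be recorded by edges of the target tree. The remaining verifications (degree, non-degeneracy, $3$-edge connectivity, the vertex condition) are fine.
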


\begin{proof}
  Let us first notice that a $\varphi$-contraction is the finite composition of contractions of equivalence classes for $\sim_{\varphi}.$ Then we may assume that $\pi$ contracts only one equivalence class $S_{\varphi}.$

    By definition of trigonal type, $\varphi:G_{\varphi}\rightarrow T$ is a non-degenerate, harmonic morphism of degree $3$ from a graph $G_{\varphi}$ such that $(G_{\varphi})^{st}=G^w$ onto a tree $T$. 
    
    In each case, we set $S_G:=S_{\varphi}\cap E(G)$ and define $(\tilde G,\tilde w)$ as the weighted contraction of $S_G$ in $(G,w)$.
    \begin{itemize}
        \item If $S_{\varphi}=\{e\}$ and it is a loop edge, then by our assumption on $3$-edge connectivity we have $\varphi(e)=v\in V(T).$
        Let $\tilde{G}_{\tilde \varphi}=G_{\varphi}/S_{\varphi}$ and $\tilde \varphi: \tilde G_{\tilde\varphi}\to \tilde T$
        to be the morphism which coincides with $\varphi$ away from $e.$ Then clearly $\tilde \varphi$ is non-degenerate and harmonic of degree $3$.
        
        \item If $S_{\varphi}=\{e\}$ and it is a non-loop edge, we have again $\varphi(e)=v\in V(T).$
        Let $k$ be the number of edges which are parallel to $e,$ i.e. sharing the same endpoints, $x$ and $y$. The contraction of $e$ then yields $k$ loops at $\pi(e).$
        
        Let $\tilde{G}_{\tilde \varphi}$ be the graph obtained from $G_{\varphi}/S_{\varphi}$ by adding $k$ leaves at $\pi(e)$ if $m_\varphi(x)+m_\varphi(y)=3$ or at the other distinct vertex $z$; $\varphi(z)=v$. Let $\tilde T$ be the tree obtained by adding $k$ leaves at $v$. We then define $\tilde \varphi: \tilde G_{\tilde\varphi}\to \tilde T$
        to be the morphism which coincides with $\varphi$ away from $e$ and all its parallel edges, and which maps each pair of edges originated from a subdivided loop at $\pi(e)$ together with the added leaf to the corresponding leaf added to the tree $T$ at~$v$. One easily checks that the obtained morphism is non-degenerate, harmonic of degree $3,$ and $\tilde G^{\tilde w}=(\tilde G_{\tilde\varphi})^{\operatorname{st}}.$ 

        \item If $S_{\varphi}=\{e_1,e_2,e_3\},$ with (w.l.o.g.) $\{e_1,e_2\}\subset G_{\varphi}$ such that $\{e_1,e_2\}$ is obtained as a refinement of a loop of $G$. 
        Then necessarily $e_3$ is a leaf and we proceed analogously as in the case $S_{\varphi}=\{e\}$ is a loop edge.
        Here $\tilde G^{\tilde w}=G^w$ therefore the same morphism shows that $(\tilde G,\tilde w,\varphi)$ a trigonal type.  
        
        \item If finally $S_{\varphi}=\{e_1,e_2,e_3\},$ with $\{e_i,e_j\}\subset G_{\varphi}$ with no $\{e_i,e_j\}$ is obtained as a refinement of a loop of $G$. Then $\varphi(e_i)=e\in E(T)$ for any $i=1,2,3.$ Their contraction might again generate new loops, say $k$ loops. Let $\tilde G_{\tilde \varphi}$ be the graph obtained by adding $k$ leaves at $G_{\varphi}$, as in the second case, and contracting all~$e_i$'s, and let $\tilde T$ be the tree obtained by contracting $e$ to a vertex $v$ and by adding $k$ leaves at $v$. 
        Also in this case the morphism $\tilde\varphi:\tilde G_{\tilde \varphi}\rightarrow \tilde T,$ defined as in the second case, has the required properties and $(\tilde G_{\tilde\varphi})^{\operatorname{st}}=\tilde G^{\tilde w}.$ \qedhere 
    \end{itemize}    
\end{proof}

As a consequence of the above proposition, a $\varphi$-contraction then determines a map of $3$-edge connected trigonal types $(G,w,\varphi)\to(\tilde G,\tilde w, \tilde \varphi)$, which by abuse of notation we will denote in the same way. 

Moreover, it is easy to see that, when the $\varphi$-contraction $\pi:(G,w,\varphi)\to (\tilde G,\tilde w,\tilde \varphi)$ does not produce loops, it is then the datum of the following commutative diagram.
\begin{center}
\begin{tikzcd}
G_{\varphi} \arrow[r,"\pi"]  \arrow[d,"\varphi"] & \tilde{G}_{\tilde\varphi}\arrow[d,"\tilde\varphi"]\\
T\arrow[twoheadrightarrow]{r}  &\tilde T
\end{tikzcd}
\end{center}
In particular, a $\varphi$-contraction of an equivalence class of edges whose image via $\varphi$ is $e\in E(T)$ corresponds to the contraction of $e,$ otherwise it is the identity morphism and $\tilde T=T.$ An example is represented in Figure~\ref{fg:contr}.

Instead, if the $\varphi$-contraction produces loops we do not have a similar diagram: the source space of the morphism $\tilde\varphi$ will not be $\pi(G_{\varphi}),$ but a graph $\tilde G_{\tilde\varphi}$ such that $(\tilde G_{\tilde\varphi})^{\operatorname{st}}=\pi(G_{\varphi}).$

\begin{figure}[ht]
\centering
\begin{tikzcd}
    \begin{tikzpicture}
    \draw[blue](-0.5,0)--(0.5,0);
    \draw[blue](-0.4,0.2)--(0,1);
    \draw(-0.4,0.2)--(-0.5,0);\vertex{-0.4,0.2}\vertex{-0.5,0}
    \draw[blue](-0.3,0.8)--(0,2);\vertex{-0.3,0.8}\vertex{2.5,0}
    \draw(-0.3,0.8)--(-0.5,0);
\vertex{0.5,0}
    \draw[red](2.5,0)--(0.5,0);
    \draw[red](0,2)--(2,2);
    \draw[red](0,1)--(2,1);
    \draw(0,2)--(0,1);
    \draw(2,2)--(2,1);
    \draw(2.5,0)--(2,1);\draw(2.5,0)--(2,2);
    \foreach \i in {0,2} {\foreach \j in {1,2} {\vertex{\i, \j}}}
    \draw[->](1,-0.3) to (1,-0.8);
    \draw (1,-0.5) node[anchor=west] {$\varphi$};
    \draw (-0.2,2.2) node[anchor=south ] {$G'$};
    \draw (-0.7,-0.8) node[anchor=south ] {$T$};
    \draw[blue](-0.5,-1)--(0.5,-1);
    \draw[red](0.5,-1)--(2.5,-1);
    \vertex{-0.5,-1}\vertex{0.5,-1}\vertex{2.5,-1}
    \end{tikzpicture}&
    \begin{tikzpicture}
    \draw[->](0,1)to (1,1);   
    \draw[->](0,-1)to (1,-1); 
\draw (0.5,1) node[anchor=south] {$\pi$};

    \end{tikzpicture}&
    \begin{tikzpicture}
    \draw[red](2.5,0)--(0.5,0);
    \draw[red](0,2)--(2,2);
    \draw[red](0,1)--(2,1);
    \draw[dashed](0,2)--(0.5,0);
    \draw(0,1)--(0.5,0);
    \vertex{0.5,0}
    \draw(0,2)--(0,1);
    \draw(2,2)--(2,1);\vertex{2.5,0}
    \draw(2.5,0)--(2,1);\draw(2.5,0)--(2,2);
    \foreach \i in {0,2} {\foreach \j in {1,2} {\vertex{\i, \j}}}
    \draw[->](1.25,-0.3) to (1.25,-0.8);
    \draw (1.25,-0.5) node[anchor=west] {$\tilde \varphi_1$};
    \draw (-0.2,2.2) node[anchor=south ] {$\tilde G'_1$};
    \draw (0.05,-0.8) node[anchor=south ] {$\tilde T_1$};
    \draw[red](0.25,-1)--(2.25,-1);
    \vertex{0.25,-1}\vertex{2.25,-1}
    \end{tikzpicture}\\
        \begin{tikzpicture}
    \draw[blue](-0.5,0)--(0.5,0);
    \draw[blue](-0.4,0.2)--(0,1);
    \draw(-0.4,0.2)--(-0.5,0);\vertex{-0.4,0.2}\vertex{-0.5,0}
    \draw[blue](-0.3,0.8)--(0,2);\vertex{-0.3,0.8}\vertex{2.5,0}
    \draw(-0.3,0.8)--(-0.5,0);
\vertex{0.5,0}
    \draw[red](2.5,0)--(0.5,0);
    \draw[red](0,2)--(2,2);
    \draw[red](0,1)--(2,1);
    \draw(0,2)--(0,1);
    \draw(2,2)--(2,1);
    \draw(2.5,0)--(2,1);\draw(2.5,0)--(2,2);
    \foreach \i in {0,2} {\foreach \j in {1,2} {\vertex{\i, \j}}}
    \draw[->](1,-0.3) to (1,-0.8);
    \draw (1,-0.5) node[anchor=west] {$\varphi$};
    \draw (-0.2,2.2) node[anchor=south ] {$G'$};
    \draw (-0.7,-0.8) node[anchor=south ] {$T$};
    \draw[blue](-0.5,-1)--(0.5,-1);
    \draw[red](0.5,-1)--(2.5,-1);
    \vertex{-0.5,-1}\vertex{0.5,-1}\vertex{2.5,-1}
    \end{tikzpicture}
    &\begin{tikzpicture}
        \draw[->](0,1)to (1,1);   
    \draw[->](0,-1)to (1,-1); 
\draw (0.5,1) node[anchor=south] {$\pi$};
    \end{tikzpicture}&
    \begin{tikzpicture}
    \draw[blue](-0.5,0)--(0.5,0);
    \draw[blue](-0.4,0.2)--(0,1);
    \draw(-0.4,0.2)--(-0.5,0);\vertex{-0.4,0.2}\vertex{-0.5,0}
    \draw[blue](-0.3,0.8)--(0,2);\vertex{-0.3,0.8}\vertex{2.5,0}
    \draw(-0.3,0.8)--(-0.5,0);
\vertex{0.5,0}
    \draw[red](2.5,0)--(0.5,0);
    \draw[red](0,2)--(2,1);
    \draw[red](0,1)--(2,1);
    \draw(0,2)--(0,1);
    \draw(2.5,0)--(2,1);
    \draw(2.5,0)[out=90, in=350]to(2,1);
    \foreach \i in {0,2} {\vertex{\i, 1}}
    \vertex{0,2}
    \draw[->](1,-0.3) to (1,-0.8);
    \draw (1,-0.5) node[anchor=west] {$\tilde \varphi_2$};
    \draw (-0.2,2.2) node[anchor=south ] {$\tilde G'_2$};
    \draw (-0.7,-0.8) node[anchor=south ] {$\tilde T_2=T$};
    \draw[blue](-0.5,-1)--(0.5,-1);
    \draw[red](0.5,-1)--(2.5,-1);
    \vertex{-0.5,-1}\vertex{0.5,-1}\vertex{2.5,-1}
    \end{tikzpicture}
    \end{tikzcd}\caption{Two examples of $\varphi$-contractions from the same graph.}\label{fg:contr}
\end{figure}
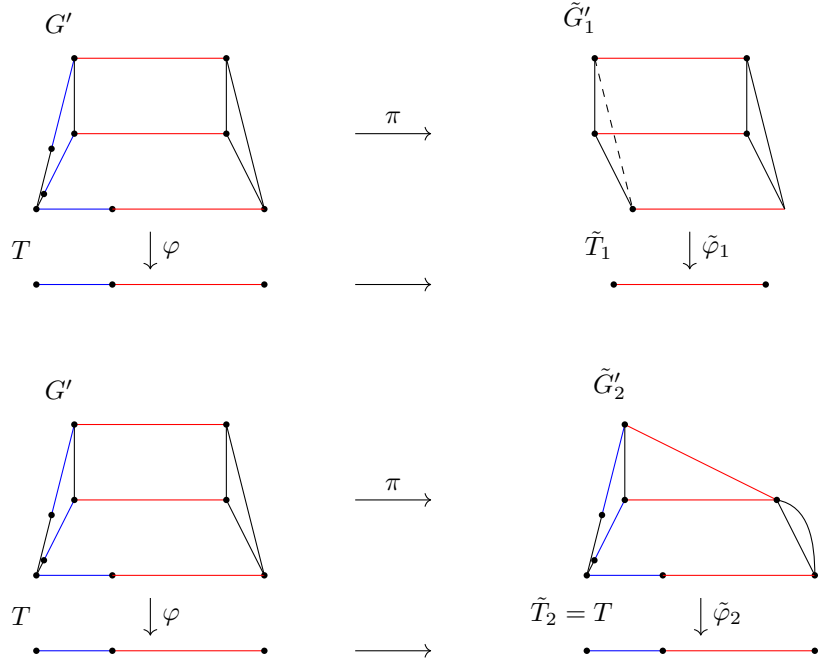

In any case, notice that a $\varphi$-contraction between trigonal types $(G,w,\varphi)\to (\tilde G,\tilde w, \tilde \varphi)$ induces a weighted contraction of stable graphs $(G,w)\to(\tilde G,\tilde w)$ (which could very well be the identity).

\begin{defin}\label{def:trig_contr}
    Let $(G,w,\varphi)$ be a $3$-edge connected trigonal type and let $(\tilde G,\tilde w,\tilde \varphi)$ be the $3$-edge connected trigonal type obtained via a $\varphi$-contraction. A \textbf{trigonal contraction} $(G,w)\to (\tilde G,\tilde w)$ is the weighted contraction induced by the $\varphi$-contraction.
\end{defin}

\begin{defin}
    We denote by $\mathcal{HT}_{g}^{(3)}$, the category whose objects are $3$-edge connected trigonal types of genus $g$, whose morphisms are $\varphi$-contractions.
    Moreover, we denote by $\mathcal{T}_{g}^{(3)}$ the category whose objects are $3$-edge connected trigonal graphs of genus $g$ and whose morphisms are trigonal contractions. 
\end{defin}

In particular, the objects of $\mathcal{T}_{g}^{(3)}$ are stable graphs of genus $g$ and the morphisms are special types of weighted contractions, so $\mathcal T_g^{(3)}$ is a subcategory of the category of stable graphs.

Let $(G,w,\varphi)$ be a $3$-edge connected trigonal type. We have already observed that the morphism $\varphi:G_{\varphi}\to T$ induces an equivalence relation $\sim_{\varphi}$ on $E(G_{\varphi}),$ which in turn induces an order relation on $E(G)$:
\begin{align*}
e_1\leq_{\varphi} e_2 \text{ in } E(G)\qquad&\text{if}\qquad \exists\, e\in E(G_{\varphi});\,e\subseteq e_2 \text{ and } e_1\sim_{\varphi}e;\\
e_1=_{\varphi} e_2 \text{ in } E(G)\qquad&\text{if}\qquad e_1,e_2\in E(G_{\varphi})\text{ and }e_1\sim_{\varphi} e_2 \text{ in } E(G_{\varphi}).
\end{align*}

Here $e\subseteq e_2$ means that $e$ is an edge that can be obtained as a refinement of $e_2.$

\begin{rema}
Given a $3$-edge connected trigonal type $(G,w,\varphi)$ one can consider the induced morphism on a metric graph of which $G_\varphi$ is a model.
We can think of the equivalence relation $\sim_{\varphi}$ as imposing a condition on the lengths of the edges in the same equivalence class: $\mu_{\varphi}(e_i)l(e_i)=\mu_{\varphi}(e_j)l(e_j)$ for $e_i,e_j$ in the same equivalence class and therefore they have same length. Similarly, we can think of the order relation $\leq_{\varphi}$ as imposing an ordering on the lengths of the edges: $e_1\leq_{\varphi} e_2$ implies $l(e_1)\leq l(e_2).$ 
\end{rema}

Notice that a $3$-edge connected trigonal graph $(G,w)$ might be obtained from two distinct trigonal types $(G,w,\varphi_1)$ and $(G, w,\varphi_2)$, which might yield different order relations on $E(G).$ An example is given in Figure~\ref{fg:equiv}. 

\begin{figure}[ht]
\centering
\begin{tikzcd}
\begin{tikzpicture}[scale=0.9]
        \draw (1,2) node[anchor=south] {$e_1$};
    \draw (1,0) node[anchor=south] {$e_3$};
    \draw (1,1) node[anchor=south] {$e_2$};
    \draw(-0.5,0)--(0,2);
    \draw (0.1,0.2) node[anchor=south] {$l_2$};
    \draw (-0.5,0.8) node[anchor=south] {$l_1$};
    \vertex{-0.5,0}
    \vertex{2.5,0}
    \draw(0,1)--(-0.5,0);
    \draw(-0.5,0)--(2.5,0);
    \draw(0,2)--(2,2);
    \draw(0,1)--(2,1);
    \draw(0,2)--(0,1);
    \draw(2,2)--(2,1);
    \draw(2,2)--(2.5,0);
    \draw(2.5,0)--(2.2,0.6);
    \draw(2,1)--(2.5,0);
    \foreach \i in {0,2} {\foreach \j in {1,2} {\vertex{\i, \j}}}
    \draw (-0.2,2.2) node[anchor=south ] {$G$};
    \end{tikzpicture}&
\begin{tikzpicture}[scale=0.9]
    \draw[blue](-0.5,0)--(0,0);
    \draw[blue](-0.15,1.4)--(0,2);
    \draw(-0.15,1.4)--(-0.5,0);\vertex{-0.15,1.4}\vertex{-0.5,0}
    \draw[blue](-0.2,0.6)--(0,1);\vertex{-0.2,0.6}\vertex{2.5,0}
    \draw(-0.2,0.6)--(-0.5,0);
    \draw[red](0,0)--(2,0);
    \draw[red](0,2)--(2,2);

    \draw[red](0,1)--(2,1);
    \draw(0,2)--(0,1);
    \draw(2,2)--(2,1);
    \draw[verde](2,0)--(2.5,0);
    \draw[verde](2,1)--(2.2,0.6);
    \draw(2.5,0)--(2.2,0.6);
    \draw[verde](2.15,1.4)--(2,2);
    \draw(2.15,1.4)--(2.5,0);
    \foreach \i in {0,2} {\foreach \j in {0,1,2,-1} {\vertex{\i, \j}}}
    \vertex{2.2,0.6}\vertex{2.15,1.4}
    \draw[->](1,-0.3) to (1,-0.8);
    \draw (1,-0.5) node[anchor=west] {$\varphi_1$};
    \draw (-0.2,2.2) node[anchor=south ] {$G_{\varphi_1}$};
    \draw (2,2) node[anchor=south ] {$x$};
    \draw (2,1) node[anchor=north ] {$x_1$};
    \draw (2,0) node[anchor=north ] {$x_2$};
    \draw (-0.7,-0.8) node[anchor=south ] {$T_1$};
    \draw[blue](-0.5,-1)--(0,-1);
    \draw[red](0,-1)--(2,-1);
    \draw[verde](2,-1)--(2.5,-1);
    \vertex{-0.5,-1}\vertex{2.5,-1}
    \divisor{2,2}\divisor{2,1}\divisor{2,0}
    \end{tikzpicture}&
    \begin{tikzpicture}[scale=0.9]
    \draw[blue](-0.5,0)--(0.5,0);
    \draw[blue](-0.5,0)--(0,1);
    \vertex{-0.5,0}
    \draw[blue](-0.3,0.8)--(0,2);\vertex{-0.3,0.8}\vertex{2.5,0}
    \draw(-0.3,0.8)--(-0.5,0);
\vertex{0.5,0}
    \draw[red](2.5,0)--(0.5,0);
    \draw[red](0,2)--(2,2);
    \draw[red](0,1)--(2,1);
    \draw(0,2)--(0,1);
    \draw(2,2)--(2,1);
    \draw(2.5,0)--(2,1);\draw(2.5,0)--(2,2);
    \foreach \i in {0,2} {\foreach \j in {1,2} {\vertex{\i, \j}}}
    \draw[->](1,-0.3) to (1,-0.8);
    \draw (1,-0.5) node[anchor=west] {$\varphi_2$};
    \draw (-0.2,2.2) node[anchor=south ] {$G_{\varphi_2}$};
    \draw (-0.7,-0.8) node[anchor=south ] {$T_2$};
    \draw[blue](-0.5,-1)--(0.5,-1);
    \draw[red](0.5,-1)--(2.5,-1);
    \draw (2,2) node[anchor=south ] {$x$};
    \divisor{2,2}\divisor{2,1}\divisor{2.5,0}
    \vertex{-0.5,-1}\vertex{0.5,-1}\vertex{2.5,-1}
    \draw (2,1) node[anchor=north ] {$x_1'$};
    \draw (2.5,0) node[anchor=north ] {$x_2'$};
    \end{tikzpicture}
    \end{tikzcd}\caption{A stable graph $G$ on the left, together with two distinct harmonic morphisms $\varphi_1:G_{\varphi_1}\to T_1$ and $\varphi_2:G_{\varphi_2}\to T_2.$}\label{fg:equiv}
\end{figure}
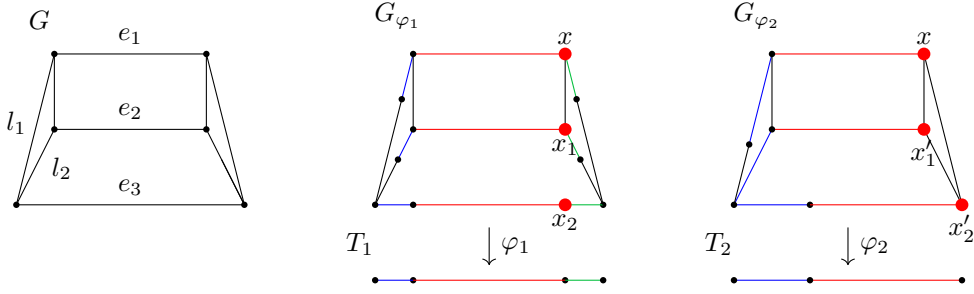

\begin{defin}
Let $(G,w,\varphi)$ be a $3$-edge connected trigonal type. We define a rational polyhedral cone 
$$\tau_{(G,w,\varphi)}=\mathbb R_{>0}^{|E(G)/\leq_{\varphi}|}$$
parametrizing length functions on a stable graph $(G,w)$, hence tropical curves, satisfying the constraints given by the order relation $\leq_\varphi$. 
\end{defin}

The space defined above is clearly a rational polyhedral cone: it is obtained as the intersection of a rational polyhedral cone with half-planes through the origin with generators defined by integral vectors. In fact the locus $\tau_{(G,w,\varphi)}\subseteq \sigma_G=\mathbb R_{>0}^{|E(G)|}$ is defined by all $|E(G)|$-tuples $\{x_1,\dots x_{|E(G)|}\}\in\sigma_G$ satisfying the constraints given by the order relation $\leq_\varphi$. In other words, for all $(G,w,\varphi)$ trigonal type, $\tau_{(G,w,\varphi)}$ is obtained by taking the intersection of $\sigma_G$ and all the half-planes $\{x_i\leq x_j\}$ for $e_i\leq_{\varphi} e_j.$ 

The faces of its closure $\overline{\tau}_{(G,w,\varphi)}$ parametrize degenerations of trigonal tropical curves in $\tau_{(G,w,\varphi)}$ obtained via a $\varphi$-contraction as in Proposition~\ref{prp:contr}.

For instance, let us consider the trigonal type $(G,\underline{0},\varphi_2)$ in Figure~\ref{fg:equiv}. The order relation induced by $\varphi_2$ gives $e_1=_{\varphi_2}e_2\leq_{\varphi_2}e_3$, $l_2\leq_{\varphi_2}l_1 $ and $l_2\leq_{\varphi_2}e_3 $. The cone~$\sigma_G=\mathbb R^9_{>0}$ parametrizes all possible lengths, also the ones not admitted, of the edges $e_1,e_2,e_3,l_1,l_2,y_1,,\dots,y_4$ of $G,$ where $y_i$ denotes the edges with no conditions on their lengths, and the associated cone ${\tau}_{(G,\underline{0},\varphi_2)}$ is given by the intersection of $\sigma_G$ with the half-planes $\{x_1=x_2\}$, $\{x_2\leq x_3\},$ $\{x_5\leq x_4\}$ and $\{x_5\leq x_3\}$. 
The faces of the closure $\overline{\tau}_{(G,\underline{0},\varphi_1)}$ are then obtained either by allowing the coordinates to be $0$ or by considering the points in the plane, corresponding to the case where equalities on the lengths are attained. 

Let us consider a $\varphi$-contraction  $\pi:(G,w,\varphi)\to (\tilde G,\tilde w,\tilde \varphi).$ 
As noticed in the proof of Proposition~\ref{prp:contr}, $(\tilde G,\tilde w)$ is either isomorphic to $(G,w),$ or it is obtained as a weighted contraction of it. Therefore a $\varphi$-contraction always induces an inclusion $i_{\pi}: E(\tilde G)\to E(G).$  
These inclusions then naturally yield face morphisms. Notice that the face morphism associated to $\pi$ induces an automorphism of the cone whenever no edge in~$G$ is contracted.

\begin{defin}
We define the \textbf{moduli spaces of $3$-edge connected trigonal covers} as the colimit
$$H_{g,3}^{\operatorname{trop},(3)}:=\varprojlim_{(G,w,\varphi)\in \mathcal{HT}_{g}^{(3)}}\overline{\tau}_{(G,w,\varphi)}. $$
\end{defin}

\begin{defin}
Given a $3$-edge connected trigonal graph $(G,w)$ we define
$$\tau_{G}:=\bigcup_{\varphi;(G,w,\varphi) \in\mathcal{HT}_g^{(3)}}\tau_{(G,w,\varphi)}=\bigcup_{\varphi;(G,w,\varphi) \in\mathcal{HT}_g^{(3)}}\mathbb R_{>0}^{|E(G)/\leq_{\varphi}|}$$
which parametrizes length functions on trigonal graphs $(G,w)$, hence trigonal tropical curves. 
\end{defin}

The space $\tau_{G}$ is a union of cones and the faces of its closure $\overline{\tau}_{G}$ parametrize the degenerations of the tropical trigonal curves in $\tau_G$ obtained through trigonal contractions as in Definition~\ref{def:trig_contr}.

\begin{defin}
We define the \textbf{moduli spaces of $3$-edge connected tropical trigonal curves} as the colimit
$$T_{g}^{\operatorname{trop},(3)}:=\varprojlim_{(G,w)\in \mathcal{T}_{g}^{(3)}}\overline{\tau}_{G}. $$
\end{defin}

The above construction is similar to that of the moduli of ($2$-edge connected) tropical hyperelliptic curves $H_{g}^{\rm trop,(2)}$ , which, as remarked in~\cite[Remark 4.10]{MC}, is a subset of $M_{g}^{\rm trop},$ but it is not a stacky subfan because its structure is more refined. 
Similarly, $T_g^{\rm trop, (3)}\subset M_{g}^{\rm trop}$ as a set and its structure as a fan is not the one induced by $M_{g}^{\rm trop}$. 

The space $H_{g,3}^{\operatorname{trop},(3)} $ has a natural projection to the moduli space $M_g^{\operatorname{trop}}$ of tropical curves of genus $g$. Notice that the moduli space $T_g^{\operatorname{trop},(3)}$ can also be thought as the image of $H_{g,3}^{\operatorname{trop},(3)}$ via its natural projection to $M_g^{\operatorname{trop}}$.

In the theory of algebraic curves,  it is well known that the Catalan number $C_n$ counts the covers $C\to\mathbb P^1$ of
minimal degree $n+1$ from a general curve $C$ of genus~$2n.$ This was already known to Castelnuovo,~\cite{C89}.
The tropical analog has been proved by Vargas in~\cite[Theorem C]{V}.
When $g=4,$ then $3=\lceil g/2\rceil+1,$ which is the gonality bound computed by Draisma and Vargas in~\cite[Theorem 2]{DV}, for tropical curves. Then, the degree of the forgetful map $H_{g,3}^{\operatorname{trop},(3)} \to T_g^{\operatorname{trop},(3)}$ is the $2$nd Catalan number $C_2=2$.

\subsection{Maximal cells for the moduli space of \texorpdfstring{$3$}{3}-edge connected trigonal curves 
}\label{sc:max_cells}
We would like to give a description of the maximal $3$-edge connected trigonal graphs of fixed genus $g$, i.e. of those graphs which cannot be obtained as a trigonal contraction of any other trigonal graph of the same genus.

Let $T$ be a non-trivial tree with vertices of valence smaller or equal than $3$.
Starting with $T$, we will construct stable and $3$-edge connected trigonal graphs $G$ with refinement $G_T$. We will later prove that such graphs are maximal in the above sense.

The graph $G_T$ constructed from the tree $T$ is constructed as follows.
Take $3$ disjoint copies of $T:$ $T^{(1)},T^{(2)},T^{(3)}$ with ordered vertices
$V(T^{(i)})=\{v_1^{(i)},\dots, v_n^{(i)}\},$ $i=1,2,3,$ such that $v_j^{(1)},v_j^{(2)},v_j^{(3)}$ correspond to the same vertex in $V(T)$, for any $j=1,\dots,n.$

Then, for any $j=1,\dots,n,$ add edges as follows.
\begin{itemize}
\item If $\operatorname{val}(v_j)=2,$ choose $i_1,i_2\in \{1,2,3\}$ and connect $v_j^{(i_1)},v_j^{(i_2)}$ with an edge.

\item If $\operatorname{val}(v_j)=1,$ connect the vertices $v_j^{(i)},i=1,2,3$ by inserting two non-parallel edges.
\end{itemize}
Moreover, if $T$ is a path, if possible, then we require that the added edges over the vertices of valence $2$ cannot have endpoints all contained in the same two copies of the tree.

\begin{defin}
    Let $T$ be a tree and $G_T$ be a graph that arises from the above construction. We call the graph $G_T$ a \textbf{$3$-ladder}.
\end{defin}

Notice that the morphism that maps the edges in $\{T^{(1)},T^{(2)},T^{(3)}\}\subset G_T$ to the corresponding edge in $T$ and contracts the remaining edges is non-degenerate and harmonic of degree $3$. We will denote it by $\varphi_T$. 

An example of the construction of a $3$-ladder from a tree $T$ is shown in Figure~\ref{fg:ex_max}, together with its stable model $G$ in Figure~\ref{fg:ex_max1}. 
\begin{figure}[ht]
\adjustbox{scale=0.7,center}{
\begin{tikzcd}
\begin{tikzpicture}
    \draw (-0.2,1.2) node[anchor=east] {$G_T$};
    \draw[red](0,0) --  (2,0);
    \draw[red](0,1) --  (2,1);
    \draw[red](0,-1) --  (2,-1);
    \draw[] (0,-1) to [out=120, in=240] (0,0);
    \draw[] (0,-1) to [out=120, in=240] (0,1);
    \draw[blue](4,0) --  (2,0);
    \draw[blue](4,1) --  (2,1);
    \draw[blue](4,-1) --  (2,-1);
    \draw[] (2,0) to [out=120, in=240] (2,1);
    \draw[orange](4,0) --  (5,0.5);
    \draw[orange](4,1) --  (5,1.5);
    \draw[orange](4,-1) --  (5,-0.5);
    \draw[] (5,-0.5) to [out=120, in=240] (5,0.5);
    \draw[] (5,0.5) to [out=120, in=240] (5,1.5);
    \draw[verde,dashed](4,0) --  (5,-0.25);
    \draw[verde](5,-0.25) --  (6,-0.5);
    \draw[verde,dashed](4,1) --  (5,0.75);
    \draw[verde](6,0.5) --  (5,0.75);
    \draw[verde](4,-1) --  (6,-1.5);
    \draw[] (6,-1.5) to [out=60, in=300]  (6,0.5);
    \draw[] (6,-0.5) to [out=60, in=300] (6,0.5);
    \foreach \i in {0,2,4} {
        \foreach \j in {0,1,-1}{
            \vertex{\i,\j}}
    }  
    \foreach \j in {1.5,0.5,-0.5} {
        \vertex{5,\j}
    }  
    \foreach \j in {0.5,-0.5,-1.5} {
        \vertex{6,\j}
    }  
    \draw[->] (3,-1.5) to (3,-2);
    \draw (3,-1.75) node[anchor=west] {$\varphi_T$};
    \draw (-0.2,-2.8) node[anchor=east] {$T$};
    \draw[red](0,-3) --  (2,-3);
    \draw[blue](2,-3) --  (4,-3);
    \draw[orange](5,-2.5) --  (4,-3);
    \draw[verde](6,-3.5) --  (4,-3);
    \vertex{0,-3}\vertex{2,-3}\vertex{4,-3}\vertex{6,-3.5}\vertex{5,-2.5}
    \end{tikzpicture}
    \end{tikzcd}}\caption{A $3$-ladder $G_T$ defined by a tree $T$ with the above construction, together with a non-degenerate harmonic morphism of degree $3.$}\label{fg:ex_max}
\end{figure}
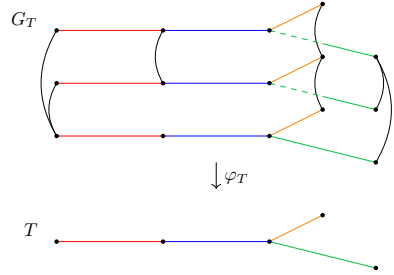

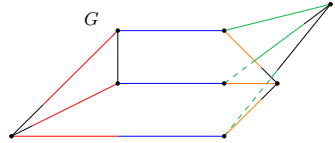
\begin{figure}[ht]
\adjustbox{scale=0.7,center}{
\begin{tikzcd}
\begin{tikzpicture}
    \draw (-0.2,2.2) node[anchor=east] {$G$};
    \draw[blue](0,2) --  (2,2);
    \draw[blue](0,1) --  (2,1);
    \draw[blue](0,0) --  (2,0);
    \draw(0,2)--(0,1);
    \draw[red](0,1)--(-1.6,0.2);
    \draw[red](0,2)--(-1.4,0.6);
    \draw[red](0,0) --  (-2,0);
    \draw[](-2,0) --  (-1.6,0.2);
    \draw[](-2,0) --  (-1.4,0.6);

    \draw[orange](2,1) --  (3,1);
    \draw[orange](2,2) --  (2.7,1.3);
    \draw[orange](2,0) --  (2.7,0.7);
    \draw[](2.7,1.3) --  (3,1);
    \draw[](2.7,0.7) --  (3,1);
    \draw[verde](2,2) --  (4,2.5);
    \draw[verde, dashed](2,1) --  (2.57,1.43);
    \draw[verde](2.57,1.43) --  (3.5,2.125);
    \draw[](3.5,2.125) --  (4,2.5);
    \draw[verde, dashed](2,0) --  (2.89,1.11);
    \draw[verde](2.89,1.11) --  (3,1.25);
    \draw[](3,1.25) --  (4,2.5);
    \vertex{-2,0}
    \vertex{0,1}\vertex{0,2}\vertex{2,2}\vertex{2,1}\vertex{2,0}
    \vertex{4,2.5}\vertex{3,1}
    \end{tikzpicture}
    \end{tikzcd}}\caption{The stable model $G$ of the $3$-ladder $G_T$ represented in Figure~\ref{fg:ex_max} as a refinement.}\label{fg:ex_max1}
\end{figure}

\begin{rema}
Let us remark that, unlike the \emph{ladders} in~\cite{MC}, given a tree $T,$ a $3$-ladder graph $G_T$ is not uniquely determined, but different choices of the edges connecting the copies of vertices of valences 1 or 2 in $T$ might yield different $3$-ladders, up to isomorphism, as in Figure~\ref{fg:numb_max}.
\begin{figure}[ht]
\centering
\begin{tikzcd}
\begin{tikzpicture}
    \draw (-0.8,0.2) node[anchor=east] {$T$};
    \draw(-1,0) --  (0,0);
    \draw(0,0) --  (1,0.5);
    \draw(0,0) --  (1,-0.5);
    \vertex{-1,0}\vertex{0,0}\vertex{1,0.5}\vertex{1,-0.5}

    \draw(-5.5,2) --  (-4.5,2);
    \draw(-4.5,2) --  (-3.5,1.5);
    \draw(-4.5,2) --  (-3.5,2.5);
    \draw(-5.5,2.75) --  (-4.5,2.75);
    \draw(-4.5,2.75) --  (-3.5,2.25);
    \draw(-4.5,2.75) --  (-3.5,3.25);
    \draw(-5.5,3.5) --  (-4.5,3.5);
    \draw(-4.5,3.5) --  (-3.5,3);
    \draw(-4.5,3.5) --  (-3.5,4);
    \draw[verde] (-5.5,2) to [out=120, in=240] (-5.5,2.75);
    \draw[verde] (-5.5,2.75) to [out=120, in=240] (-5.5,3.5);
    \draw[verde] (-3.5,2.5) to [out=120, in=240] (-3.5,3.25);
    \draw[verde] (-3.5,3.25) to [out=120, in=240] (-3.5,4);
    \draw[verde] (-3.5,1.5) to [out=60, in=300] (-3.5,2.25);
    \draw[verde] (-3.5,2.25) to [out=60, in=300] (-3.5,3);

    \draw(-2.5,2) --  (-1.5,2);
    \draw(-1.5,2) --  (-0.5,1.5);
    \draw(-1.5,2) --  (-0.5,2.5);
    \draw(-2.5,2.75) --  (-1.5,2.75);
    \draw(-1.5,2.75) --  (-0.5,2.25);
    \draw(-1.5,2.75) --  (-0.5,3.25);
    \draw(-2.5,3.5) --  (-1.5,3.5);
    \draw(-1.5,3.5) --  (-0.5,3);
    \draw(-1.5,3.5) --  (-0.5,4);
    \draw[verde] (-2.5,2) to [out=120, in=240] (-2.5,2.75);
    \draw[verde] (-2.5,2.75) to [out=120, in=240] (-2.5,3.5);
    \draw[verde] (-0.5,2.5) to [out=120, in=240] (-0.5,3.25);
    \draw[verde] (-0.5,3.25) to [out=120, in=240] (-0.5,4);
    \draw[red] (-0.5,2.25) to [out=60, in=300] (-0.5,3);
    \draw[red] (-0.5,1.5) to [out=60, in=300] (-0.5,3);

    \draw(0.5,2) --  (1.5,2);
    \draw(1.5,2) --  (2.5,1.5);
    \draw(1.5,2) --  (2.5,2.5);
    \draw(0.5,2.75) --  (1.5,2.75);
    \draw(1.5,2.75) --  (2.5,2.25);
    \draw(1.5,2.75) --  (2.5,3.25);
    \draw(0.5,3.5) --  (1.5,3.5);
    \draw(1.5,3.5) --  (2.5,3);
    \draw(1.5,3.5) --  (2.5,4);
    \draw[verde] (0.5,2) to [out=120, in=240] (0.5,2.75);
    \draw[verde] (0.5,2.75) to [out=120, in=240] (0.5,3.5);
    \draw[red] (2.5,2.5) to [out=120, in=240] (2.5,4);
    \draw[red] (2.5,3.25) to [out=120, in=240] (2.5,4);
    \draw[red] (2.5,2.25) to [out=60, in=300] (2.5,3);
    \draw[red] (2.5,1.5) to [out=60, in=300] (2.5,3);

    \draw(3.5,2) --  (4.5,2);
    \draw(4.5,2) --  (5.5,1.5);
    \draw(4.5,2) --  (5.5,2.5);
    \draw(3.5,2.75) --  (4.5,2.75);
    \draw(4.5,2.75) --  (5.5,2.25);
    \draw(4.5,2.75) --  (5.5,3.25);
    \draw(3.5,3.5) --  (4.5,3.5);
    \draw(4.5,3.5) --  (5.5,3);
    \draw(4.5,3.5) --  (5.5,4);
    \draw[verde] (3.5,2) to [out=120, in=240] (3.5,2.75);
    \draw[verde] (3.5,2.75) to [out=120, in=240] (3.5,3.5);
    \draw[red] (5.5,2.5) to [out=120, in=240] (5.5,4);
    \draw[red] (5.5,3.25) to [out=120, in=240] (5.5,4);
    \draw[blue] (5.5,1.5) to [out=60, in=300] (5.5,2.25);
    \draw[blue] (5.5,1.5) to [out=60, in=300] (5.5,3);
    \foreach \i in {-5.5,-4.5,-2.5,-1.5,0.5,1.5,3.5,4.5} {
        \foreach \j in {2,2.75,3.5}{
            \vertex{\i,\j}}
    }  
    \foreach \i in {-3.5,-0.5,2.5,5.5} {
        \foreach \j in {1.5,2.25,2.5,3,3.25,4}{
            \vertex{\i,\j}}
    }  
    \end{tikzpicture}
    \end{tikzcd}\caption{Four non-isomorphic graphs, which are $3$-ladders constructed from the tree $T$.}\label{fg:numb_max}
\end{figure}
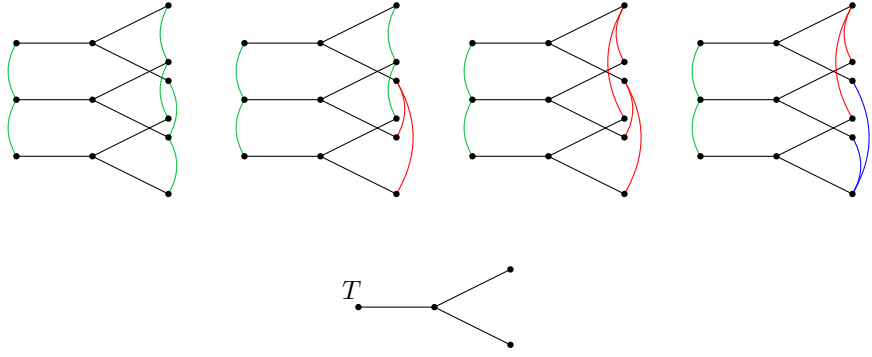
\end{rema}

\begin{rema}
    Consider the case where $T$ is a path. The condition on the edges over vertices of valence $2$ ensures the graph to be maximal with respect to  trigonal contraction. 
    
    We will now show that in fact that if $G$ admits a non-degenerate harmonic morphism of degree $3$ from a graph $G_T'$ where the above condition is not satisfied, then it is not maximal, since it also admits another non-degenerate harmonic morphism of degree $3$ from a graph $G_1',$ which is a trigonal contraction of a~$3$-ladder.
    
    More precisely, if $(G,w,\varphi_T')$ is a trigonal type with a morphism $\varphi_T':G_T'\to T$ with $(G_T')^{\operatorname{st}}=G$ such that $G_T'$ is constructed as $G_T,$ without the condition on the edges over vertices of valence $2$ , then $G$ also admits a degree $3$ harmonic morphism $G_1\to T_1$ with $(G_1)^{\operatorname{st}}=G$ and such that $G_1$ is the trigonal contraction of a $3$-ladder~$G_T.$
    
    In fact, let us consider $G_T'$ whose construction is analogous to that of $3$-ladder $G_T,$ but with the difference that the edges over vertices of valence $2$ are all incident to vertices in the same two copies of $T$. 
    Indeed, for any choice of vertical edges made over the vertices of valence $1,$ we have two possible morphisms. The first one is the one that follows from the usual construction, where all vertical edges are contracted. The second is instead obtained by considering the vertical edges together with their consecutive horizontal one (glued by a single vertex of valence $2$) and contract them to the same vertex that is the image of the unique vertex of valence $3$.
    For instance we can assume the graph $G_T'$ to be as the graph in Figure~\ref{fg:not_max}, with a second harmonic morphism depicted by $G_1'\to T_1,$ both with stable model as the graph in the left in Figure~\ref{fg:equiv}.
    \begin{figure}[ht]
\centering
\begin{tikzcd}
\begin{tikzpicture}
    \draw (-0.2,1.2) node[anchor=south] {$G_T'$};
    \draw[blue](0,0)--(1,0);
    \draw[blue](0,0.5)--(1,0.5);
    \draw[blue](0,1)--(1,1);
    \draw(1,1)--(1,0.5);
    \draw(2,1)--(2,0.5);
    \draw(0,0) to [out=120, in=240] (0,1);
    \draw(0,0) to [out=120, in=240] (0,0.5);
    \draw(3,0) to [out=60, in=300] (3,0.5);
    \draw(3,0) to [out=60, in=300] (3,1);
    \draw[red](2,0)--(1,0);
    \draw[red](2,0.5)--(1,0.5);
    \draw[red](2,1)--(1,1);
    \draw[verde](2,0)--(3,0);
    \draw[verde](2,0.5)--(3,0.5);
    \draw[verde](2,1)--(3,1);
    \draw[->] (1.5,-0.4) to (1.5,-0.7);
    \draw[blue](0,-1)--(1,-1);
    \draw[red](2,-1)--(1,-1);
    \draw[verde](2,-1)--(3,-1);
    \draw (-0.2,-0.8) node[anchor=south] {$T$};
        \foreach \i in {0,1,2,3} {
    \foreach \j in {0,0.5,1,-1} {
    	    \vertex{\i, \j}
         }
    	}
    \end{tikzpicture}&\begin{tikzpicture}
    \draw[<->](0,0.5)to (2,0.5);   
\draw (1,0.5) node[anchor=south] {$(G_T')^{\operatorname{st}}=(G'_1)^{\operatorname{st}}$
};
    \end{tikzpicture}&
    \begin{tikzpicture}
    \draw (-0.2,1.2) node[anchor=south] {$G_1'$};
    \draw[blue](0,0)--(1,0);
    \draw[blue](0,0.5)--(1,0.5);
    \draw[blue](0,1)--(1,1);
    \draw(1,1)--(1,0.5);
    \draw(2,1)--(2,0.5);
    \draw(0,0) to [out=120, in=240] (0,1);
    \draw(0,0) to [out=120, in=240] (0,0.5);
    \draw(2,0) to [out=60, in=300] (2,0.5);
    \draw(2,0) to [out=60, in=300] (2,1);
    \draw[red](2,0)--(1,0);
    \draw[red](2,0.5)--(1,0.5);
    \draw[red](2,1)--(1,1);
    \draw[->] (1,-0.4) to (1,-0.7);
    \draw[blue](0,-1)--(1,-1);
    \draw[red](2,-1)--(1,-1);
    \draw (-0.2,-0.8) node[anchor=south] {$T_1$};
        \foreach \i in {0,1,2} {
    \foreach \j in {0,0.5,1,-1} {
    	    \vertex{\i, \j}
         }
    	}
    \end{tikzpicture}
    \end{tikzcd}\caption{}\label{fg:not_max}
\end{figure}

It is then easy to check that we can obtain $G'_1$ as the trigonal contraction of a $3$-ladder $G_T,$ as in Figure~\ref{fg:not_max2}.    

\begin{figure}[ht]
\centering
\begin{tikzcd}
\begin{tikzpicture}
    \draw (-0.2,1.2) node[anchor=south] {$G_T$};
    \draw[blue](0,0)--(1,0);
    \draw[blue](0,0.5)--(1,0.5);
    \draw[blue](0,1)--(1,1);
    \draw(1,1)--(1,0.5);
    \draw(2,0)--(2,0.5);
    \draw(0,0) to [out=120, in=240] (0,1);
    \draw(0,0) to [out=120, in=240] (0,0.5);
    \draw(3,0.5) to [out=60, in=300] (3,1);
    \draw(3,0) to [out=60, in=300] (3,1);
    \draw[red](2,0)--(1,0);
    \draw[red](2,0.5)--(1,0.5);
    \draw[red](2,1)--(1,1);
    \draw[verde](2,0)--(3,0);
    \draw[verde](2,0.5)--(3,0.5);
    \draw[verde](2,1)--(3,1);
    \draw[->] (1.5,-0.4) to (1.5,-0.7);
    \draw[blue](0,-1)--(1,-1);
    \draw[red](2,-1)--(1,-1);
    \draw[verde](2,-1)--(3,-1);
    \draw (-0.2,-0.8) node[anchor=south] {$T$};
        \foreach \i in {0,1,2,3} {
    \foreach \j in {0,0.5,1,-1} {
    	    \vertex{\i, \j}
         }
    	}
    \end{tikzpicture}&\begin{tikzpicture}
    \draw[->](0,0.5)to (1,0.5);   
\draw (0.5,0.5) node[anchor=south] {$\pi$
};
    \end{tikzpicture}&
    \begin{tikzpicture}
    \draw (-0.2,1.2) node[anchor=south] {$G_1'$};
    \draw[blue](0,0)--(1,0);
    \draw[blue](0,0.5)--(1,0.5);
    \draw[blue](0,1)--(1,1);
    \draw(1,1)--(1,0.5);
    \draw(2,1)--(2,0.5);
    \draw(0,0) to [out=120, in=240] (0,1);
    \draw(0,0) to [out=120, in=240] (0,0.5);
    \draw(2,0) to [out=60, in=300] (2,0.5);
    \draw(2,0) to [out=60, in=300] (2,1);
    \draw[red](2,0)--(1,0);
    \draw[red](2,0.5)--(1,0.5);
    \draw[red](2,1)--(1,1);
    \draw[->] (1,-0.4) to (1,-0.7);
    \draw[blue](0,-1)--(1,-1);
    \draw[red](2,-1)--(1,-1);
    \draw (-0.2,-0.8) node[anchor=south] {$T_1$};
        \foreach \i in {0,1,2} {
    \foreach \j in {0,0.5,1,-1} {
    	    \vertex{\i, \j}
         }
    	}
    \end{tikzpicture}
    \end{tikzcd}\caption{}\label{fg:not_max2}
\end{figure}

\end{rema}

Recall, from the discussion on the previous section, that any non-degenerate harmonic morphism $\varphi_T:G_T\to T$ of degree $3$ induces an equivalence relation $\sim_{\varphi_T}$ on $|E(G_T)|.$ Let us now compute the genus and the number of equivalence classes $|E(G_T)/\sim_{\varphi_T}|$ in terms of the number of vertices $n$ of the tree $T.$

\begin{lemm}\label{lm:2n+1}
    Let $T$ be a tree with $n$ vertices. Then $g(G_T)=n$ and $|E(G_T)/\sim_{\varphi_T}|=2n+1.$
\end{lemm}
\begin{proof}
From our construction, $|V(G_T)|=3|V(T)|=3n.$

Then, for any edge in $T,$ we have $3$ copies in $G_T$, but then we also have to add $2$ edges for each leaf, and one for any vertex of valence $2.$

Let $\mu_k$ denote the number of vertices in $T$ of valence $k.$
We have $|E(G_T)|=3|E(T)|+2\mu_1+\mu_2=3(n-1)+2\mu_1+\mu_2.$
Noticing that $n=\mu_1+\mu_2+\mu_3$ and $\mu_1=\mu_3+2,$ gives $|E(G_T)|=5\mu_1+4\mu_2+3\mu_3-3=4n-1.$
Then $g(G_T)=b_1(G_T)=4n-1-3n+1=n.$

The number of equivalence classes, instead, is precisely the number of edges in the three, plus all the other edges added in the construction. Thus 
$|E(G_T)/\sim_{\varphi_T}|=(n-1)+2\mu_1+\mu_2=3\mu_1+2\mu_2+\mu_3-1=2\mu_1+2\mu_2+2\mu_3+1=2n+1.$
\end{proof}

When we consider the stabilization of $G_T$ the genus is preserved. Moreover we also know that the equivalence relation on $E(G')$ also induced an order relation $\leq_{\varphi_T}$ on the edges of stable graph $G=(G_T)^{\operatorname{st}}$, and thus an equivalence relation $=_{\varphi_T}$. Clearly, $|E(G)/=_{\varphi_T}|\leq|E(G_T)/\sim_{\varphi_T}|$, but we will prove that actually equality holds, with an exception for $g=3.$

\begin{lemm}\label{lm:stab}
Let $G_T$ be such that $g(G_T)>3.$ Then, if $G=(G_T)^{\operatorname{st}}$, we have $|E(G_T)/\sim_{\varphi_T}|=|E(G)/=_{\varphi_T}|$.

If $g(G_T)=3,$ then $|E(G)/=_{\varphi_T}|=|E(G_T)/\sim_{\varphi_T}|-1=6.$
\end{lemm}
\begin{proof}

Let us count the number of equivalence classes in $E(G)$, with respect to the equivalence relation induced by $\varphi_T.$

For any leaf-edge in $T$, we have that its pre-image via $\varphi_T$ gives precisely three edges contained in precisely three distinct edges in $G.$ 

Then $G'_T,$ the pre-image in $G_T$ of the tree $T'$ obtained by removing all leaves from~$T$, with the usual construction. 
The tree $T'$ will thus have $n-\mu_1-1$ edges.

By construction, the pre-image of an edge incident to a vertex of valence $2$ in $T'$
is either an edge in $E(G_T),$ or an edge which is contained in one edge in $G,$
whose length is uniquely determined by the lengths of the edges in the tree. 

Moreover, the pre-image of the edges incident to vertices in $T'$ of valence $3$ and the vertical edges are edges in the stable graph $G.$

Hence if $n-\mu_1-1>0$ (i.e. $T'$ containes of at least an edge), we have that $|E(G)/=_{\varphi_T}|=3\mu_1+(n-\mu_1-1)+\mu_2=3\mu_1+2\mu_2+\mu_3-1=2n+1.$
Notice also that the condition $n-\mu_1-1>0$ is equivalent to $g>3.$

If instead $n-\mu_1-1=0$ we have precisely that $g=3,$ in this case, the tree with the leaves removed has no edges therefore the above argument cannot be applied.
In this case if suffices to count the number of edges in $G$, which is $6$, and check that they all belong to different equivalence classes.

Notice that this is precisely the case where $T$ is a path. Indeed, more generally, in the case where all vertical edges in $T$ are incident to the same copies of the tree, then there is an edge mapping the whole tree, as in Figure~\ref{fg:not_max}. The length of this edge is then counted twice when we count the pre-images over the leaves, therefore the dimension would drop by $1.$ 
\end{proof}

\begin{rema}\label{rk:max}
We would like to conclude by saying that the dimension of $T_g^{\operatorname{trop},(3)}$ is determined by $E(G_T)$ and the associated morphism. 
However, let us recall that a trigonal graph might allow multiple morphisms, which thus could define a different dimension. Let us now show that any other morphism, if it exists, would induce a smaller dimension. 

Let $G$ be the stabilization of a $3$-ladder $G_T,$ and assume that there is a second morphism $\varphi':G'\to T',$ with $(G')^{\operatorname{st}}=G$ and $T'$ some other tree.

Since $G$ has no points of valence greater than $3,$ the same must hold for $T'.$ By definition of trigonal type, we also require that $\varphi'$ is such that the preimage of any $t\in V(T)$ contains (at least) a vertex of valence $3.$ In particular $G$ is $3$-regular, i.e. its vertices have all valence $3.$ Then, by $3$-edge connectivity and $3$-regularity of the stabilization, this means precisely that the pre-image of vertices of valence $2$ contains precisely one edge, and that of vertices of valence $1$ contains two of them. Therefore a similar argument to that of the proof of Lemma~\ref{lm:2n+1} can be applied.
Thus,  $$|E(G)/=_{\varphi'}|\leq|E(G')/\sim_{\varphi'}|=2g+1=|E(G_T)/\sim_{\varphi_T}|.$$
\end{rema}

\begin{theo}\label{th:max_cells}
    Let $g\geq3.$ The maximal cells of $T_g^{\operatorname{trop},(3)}$ are associated to graphs $(G,\underline{0})$ with $G=(G_T)^{\operatorname{st}}$ with $G_T$ a $3$-ladder constructed from $T$ a tree with $g$ vertices.
\end{theo}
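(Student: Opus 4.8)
The plan is to identify the maximal cells of $T_g^{\operatorname{trop},(3)}$ with the $3$-edge connected trigonal graphs $(G,w)$ that are maximal with respect to trigonal contraction, and then to show that these are exactly the stabilizations of $3$-ladders built on trees with $g$ vertices. First I would record that, in the generalized cone complex $T_g^{\operatorname{trop},(3)}$, the face relations among the cones $\overline{\tau}_G$ are induced by trigonal contractions (Definition \ref{def:trig_contr}), so a cell $\overline{\tau}_G$ is maximal precisely when $(G,w)$ is not a proper trigonal contraction of another genus-$g$ trigonal graph. A proper trigonal contraction contracts at least one edge (the ``no edge contracted'' case is an automorphism of the cone, as noted after Proposition \ref{prp:contr}), so it strictly lowers $\dim\overline{\tau}_G=\max_{\varphi}|E(G)/=_{\varphi}|$. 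Hence no cell lies strictly above a maximal one, and it suffices to compute the dimension of a $3$-ladder cell and to prove a matching global upper bound.

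Next I would show that the $3$-ladders realize the maximum. For a tree $T$ with $n=g$ vertices and a $3$-ladder $G_T$ with stabilization $G=(G_T)^{\operatorname{st}}$, the triple $(G,\underline 0,\varphi_T)$ is a genuine $3$-edge connected trigonal type, and by Lemmas \ref{lm:2n+1} and \ref{lm:stab} the associated cone has dimension $|E(G)/=_{\varphi_T}|=2g+1$ when $g\geq 4$ and $6$ when $g=3$. By Remark \ref{rk:max} no competing morphism on the same $G$ produces a larger cone, so $\dim\overline{\tau}_G$ equals $2g+1$ (respectively $6$). To conclude that such cells are maximal I would establish the global bound $|E(G')/=_{\varphi'}|\leq 2g+1$ (respectively $\leq 6$ in genus $3$) for every genus-$g$ trigonal type $(G',w',\varphi')$, which then also yields the converse.

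For this bound, and for the converse simultaneously, I would first argue that a dimension-maximizing trigonal graph is weightless and trivalent: a positive weight can be un-contracted into a loop and a vertex of valence $\geq 4$ split by an edge, each move reversing one of the cases of Proposition \ref{prp:contr} to produce a strictly larger genus-$g$ trigonal type contracting onto the original; maximality therefore forces $w=\underline 0$ and $3$-regularity, whence $|E(G)|=3g-3$. I would then run the count of Lemma \ref{lm:2n+1} in reverse on an arbitrary cover $\varphi':G'_{\varphi'}\to T'$: expressing the genus as the Euler characteristic of the degree-$3$ cover of the tree and using that every vertex of $T'$ is essential (the footnote condition in Definition \ref{def:type} together with stability of $G'$ forbids valence-$2$ pass-through vertices carrying no vertical edge), I obtain that $T'$ has at most $g$ vertices, with equality forcing each fibre to consist of three sheets joined exactly by the two construction rules of the $3$-ladder recipe. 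Matching the fibre data over leaves, valence-$2$, and valence-$3$ vertices then identifies $G'$ with $(G_T)^{\operatorname{st}}$ for a tree $T$ with $g$ vertices; the degenerate instance $n-\mu_1-1=0$ is precisely the path case and produces the exceptional value $6$ in genus $3$, where the cap comes from $|E(G)|=6$ rather than from $2g+1=7$.

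I expect the main obstacle to be exactly this last step: controlling the fibre combinatorics of an arbitrary degree-$3$ harmonic cover of a tree tightly enough to force the $3$-ladder shape. The delicate points are showing that stability plus essentiality genuinely caps the number of tree vertices by $g$ (ruling out long trees carrying little genus, which naively seem to inflate the dimension), and verifying that at a vertex of valence $\geq 4$ the splitting can be chosen $\varphi'$-compatibly so that the un-contraction remains a trigonal type. Throughout, the genus-$3$ case must be tracked separately, since there $3g-3=6<2g+1$ and the extremal bound is governed by the edge count rather than by $2g+1$.
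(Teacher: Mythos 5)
Your overall strategy is the same as the paper's: reduce to trigonal graphs that are maximal with respect to trigonal contractions, use ``un-contraction'' moves reversing Proposition \ref{prp:contr} to force the weight to vanish and the valences to be $3$, and then use the edge/vertex count of Lemma \ref{lm:2n+1} (your Euler-characteristic computation $V=g+2$ for the vertical edges, together with $V\geq 2\mu_1+\mu_2=n+2$, is exactly the count the paper runs in Lemma \ref{lm:2n+1} and Remark \ref{rk:max}) to cap the dimension at $2g+1$, with the genus-$3$ exception coming from $3g-3=6<7$ as in Lemma \ref{lm:stab}. The counting part of your argument is correct and matches the paper's Remark \ref{rk:max}.

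The gap is in the bridge between face-maximality and dimension-maximality. You reduce the theorem to ``compute the dimension of a $3$-ladder cell and prove a matching global upper bound,'' but that only identifies the cells of maximal \emph{dimension}; it does not rule out maximal cells (cells that are not proper faces of any other cell) of dimension strictly less than $2g+1$. Your un-contraction moves are invoked only to force $w=\underline 0$ and $3$-regularity; after that you rely on ``equality in the count forces the $3$-ladder recipe.'' This characterizes the covers achieving $n=g$, but says nothing about a weightless trivalent $3$-edge connected graph whose only degree-$3$ covers have $n<g$ (for instance, three vertical edges over a copy of a leaf, or two vertical edges in the fibre over a valence-$2$ vertex): such a graph has a lower-dimensional cell, and you must still show it is a proper trigonal contraction of something larger. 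The paper closes exactly this hole with the explicit splitting moves at tree vertices of each valence (its Figures \ref{fg:tree3}, \ref{fg:tree2}, \ref{fg:tree1}), showing that any fibre with more than $0$, $1$, resp.\ $2$ vertical edges over a vertex of valence $3$, $2$, resp.\ $1$ can be un-contracted while preserving the trigonal-type structure; the same device also disposes of the cases $|V(G)|\in\{2,3\}$ and of $m_\varphi(v)\in\{2,3\}$, which you would need for your claim that $m_\varphi\equiv 1$ before the Euler count applies. These moves are in the spirit of what you already propose for weights and high valence, so the gap is fillable with your own tools, but as written the argument does not establish that every maximal cell is a $3$-ladder cell --- only that every top-dimensional one is.
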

\begin{proof}
Let $(G,w)$ be a trigonal graph and we assume it to be maximal with respect to trigonal contractions.

First notice that $w$ has to be uniformly $0,$ otherwise we could write $(G,w)$ as the contraction of the loops in $(G^{w},\underline{0}).$ 

Let us then show that $|V(G)|\not\in\{2,3\}.$ This is done by showing that in any of these cases, the graph $G$ can be obtained as the trigonal-contraction of a trigonal graph.
If~$|V(G)|\in\{2,3\}$ then $G$ is a graph with $g+1$ or $g+2$ edges. 

Clearly a graph with 2 vertices can always be obtained from a graph with 3 vertices with two of the vertices connected by exactly one edge $e$, via the contraction of $e$, as shown in Figure~\ref{fg:v2}.
\begin{figure}[ht]
\centering
\begin{tikzcd}
\begin{tikzpicture}
    \vertex{0,0}\vertex{1,1}\vertex{2,0}
    \draw[] (0,0) to [out=340, in=200] (2,0);
    \draw[] (0,0) to [out=80, in=190] (1,1);
    \draw[] (0,0) to [out=60, in=210] (1,1);
    \draw[] (0,0) to [out=20, in=260] (1,1);
    \draw[] (2,0) to [out=110, in=350] (1,1);
    \draw[] (2,0) to [out=160, in=280] (1,1);
    \draw (1,-0.5) node[anchor=north] {$e$};
    \end{tikzpicture}\arrow[r,"\pi"]&
    \begin{tikzpicture}
    \vertex{0,0}\vertex{0,1}
    \draw[] (0,0) to [out=150, in=210] (0,1);
    \draw[] (0,0) to [out=30, in=330] (0,1);
    \draw[] (0,0)--(0,1);
    \draw[] (0,0) to [out=120, in=240] (0,1);
    \draw[] (0,0) to [out=60, in=300] (0,1);
    \draw (0,-0.5) node[anchor=north] {$\pi(e)$};
    \end{tikzpicture}
    \end{tikzcd}\caption{}\label{fg:v2}
\end{figure}

If instead $|V(G)|=3$ then $G$ can be obtained from a graph $G'$ with $6$ vertices: $v_1,v_2,v_3\in V(G)$ and a copy of each of them $v_1',v_2',v_3'$, with an edge connecting each pair $v_i,v_i'$ and a number of edges between $v_i,v_j$ and $v_i',v_j'$ such that their sum is equal to the number of edges between $v_i,v_j$ in $V(G).$ The morphism sending the three edge $v_iv_i'$ and contracting everything else yields a harmonic morphism of degree $3$ to $T=K_2$ makes $G'$ trigonal and the contraction of the $3$ edges, which is a trigonal contraction, gives $G$, as shown in Figure~\ref{fg:v3}.
\begin{figure}[ht]
\centering
\begin{tikzcd}
\begin{tikzpicture}
    \draw[blue](0,0)--(2,0.5);
    \draw[blue](1,1)--(3,1.5);
    \draw[blue](2,0)--(4,0.5);
    \vertex{2,0.5}\vertex{3,1.5}\vertex{4,0.5}
    \draw[] (2,0.5) to [out=60, in=210] (3,1.5);
    \draw[] (4,0.5) to [out=160, in=280] (3,1.5);
        \draw (0,0) node[anchor=north] {$v_1$};
    \draw (2,0) node[anchor=north] {$v_3$};
    \draw (1,1) node[anchor=south east] {$v_2$};
    \vertex{0,0}\vertex{1,1}\vertex{2,0}
    \draw[] (0,0) to [out=340, in=200] (2,0);
    \draw[] (0,0) to [out=80, in=190] (1,1);
    \draw[] (0,0) to [out=20, in=260] (1,1);
    \draw[] (2,0) to [out=110, in=350] (1,1);
        \draw (0,0) node[anchor=north] {$v_1$};
    \draw (2,0) node[anchor=north] {$v_3$};
    \draw (1,1) node[anchor=south east] {$v_2$};
    
    \end{tikzpicture}\arrow[r,"\pi"]&
    \begin{tikzpicture}
        \vertex{0,0}\vertex{1,1}\vertex{2,0}
    \draw[] (0,0) to [out=340, in=200] (2,0);
    \draw[] (0,0) to [out=80, in=190] (1,1);
    \draw[] (0,0) to [out=60, in=210] (1,1);
    \draw[] (0,0) to [out=20, in=260] (1,1);
    \draw[] (2,0) to [out=110, in=350] (1,1);
    \draw[] (2,0) to [out=160, in=280] (1,1);
    \draw (0,0) node[anchor=north] {$v_1$};
    \draw (2,0) node[anchor=north] {$v_3$};
    \draw (1,1) node[anchor=south] {$v_2$};
    \end{tikzpicture}
    \end{tikzcd}\caption{}\label{fg:v3}
\end{figure}

Let us now consider the case $|V(G)|>3$ and consider the non-degenerate harmonic morphism 
$\varphi:G'\to T$ of degree $3$ with $(G')^{\operatorname{st}}=G$ and $T$ a tree.
We want to show that $G'=G_T$ and this will be done using the properties of harmonic morphisms.

We first claim that for any $v\in V(G'),$ $m_{\varphi}(v)=1.$
Assume by contradiction that there is a vertex $v$ with $m_{\varphi}(v)=3.$
By $3$-edge connectivity of $G,$ we have $\mu_{\varphi}(e)=1$ for any $e\in E_v(G')$ and therefore $\operatorname{val}(v)\geq6$. Then we can replace $v$ with two vertices~$v_1,v_2$ such that $v_1$ is incident to $2$ of any of the $3$ edges sent to an edge via $\varphi,$ and $v_2$ is incident to the remaining ones. Moreover, $v_1,v_2$ are connected to an edge whose contraction gives us back $G'$.
Clearly the resulting graph still admits a non-degenerate harmonic morphism of degree $3$ to the same tree.

If instead $m_{\varphi}(v)=2,$ then we are for instance in the previous case whit $v=v_1$ and $\operatorname{val}(v)\geq 4.$ Replace $v_1$ with two vertices, $v_1',v_2'$ as shown in Figure~\ref{fg:mult1}.

Also in this case, $v_1',v_2'$ are connected to an edge whose contraction gives us back the previous case and again the resulting graph still admits a non-degenerate harmonic morphism of degree $3$ to the same tree.

\begin{figure}[ht]
\centering
\begin{tikzcd}
\begin{tikzpicture}
    \draw[blue](0,0)--(1,0);
    \draw[blue](0,0.5)--(1,0.5);
    \draw[blue](0,1)--(1,1);
    \draw[red](2,0)--(1,0);
    \draw[red](2,0.5)--(1,0.5);
    \draw[red](2,1)--(1,1);
    \draw[->] (1,-0.4) to (1,-0.7);
    \draw[blue](0,-1)--(1,-1);
    \draw[red](2,-1)--(1,-1);
    \draw(1,1)--(1,0.5);
    \draw(1,0)--(1,0.5);
    \foreach \i in {0,1,2} {
    \foreach \j in {0,0.5,1,-1} {
    	    \vertex{\i, \j}
         }
    	}
     \draw (0.9,0.9) node[anchor=south west] {$v_1'$};
     \draw (0.9,0.4) node[anchor=south west] {$v_2'$};
     \draw (1,0) node[anchor=north] {$v_2$};
    \end{tikzpicture}\arrow[r,"\pi_2"]&
    \begin{tikzpicture}
    \draw[blue](0,0)--(1,0);
    \draw[blue](0,0.5)--(1,0.5);
    \draw[blue](0,1)--(1,0.5);
    \draw[red](2,0)--(1,0);
    \draw[red](2,0.5)--(1,0.5);
    \draw[red](2,1)--(1,0.5);
    \draw[->] (1,-0.4) to (1,-0.7);
    \draw[blue](0,-1)--(1,-1);
    \draw[red](2,-1)--(1,-1);
    
    \draw(1,0)--(1,0.5);
    \foreach \i in {0,1,2} {
    \foreach \j in {0,0.5,-1} {
    	    \vertex{\i, \j}
         }
    	}
     \vertex{0,1}\vertex{2,1}
     \draw (1,0.5) node[anchor=south ] {$v_1$};
     \draw (1,0) node[anchor=north] {$v_2$};
    \end{tikzpicture}\arrow[r,"\pi_1"]&
    \begin{tikzpicture}
    \draw[blue](0,0)--(1,0.5);
    \draw[blue](0,0.5)--(1,0.5);
    \draw[blue](0,1)--(1,0.5);
    \draw[red](2,0)--(1,0.5);
    \draw[red](2,0.5)--(1,0.5);
    \draw[red](2,1)--(1,0.5);
    \draw[->] (1,-0.4) to (1,-0.7);
    \draw[blue](0,-1)--(1,-1);
    \draw[red](2,-1)--(1,-1);
    \foreach \i in {0,1,2} {
    \foreach \j in {0.5,-1} {
    	    \vertex{\i, \j}
         }
    	}
     \vertex{0,1}\vertex{2,1}\vertex{0,0}\vertex{2,0}
    \draw (1,0.5) node[anchor=south] {$v$};
    \end{tikzpicture}
    \end{tikzcd}\caption{}\label{fg:mult1}
\end{figure}

Notice that this argument can also be applied to exclude the case where there are loops in $G'$. 

From the fact that $m_{\varphi}(v)=1$ for any vertex $v,$ the graph $G'$ contains three disjoint copies of the tree $T$.

The valence of the vertices in $T$ cannot exceed $3.$ If $T$ contains a vertex $v$ with $\operatorname{val}(v)>3,$ then it is sufficient to consider the tree  obtained by replacing $v$ with two vertices $v_1,v_2,$ connected via an edge $e,$ such that $\operatorname{val}(v_1)=3,$ and $\operatorname{val}(v_2)=\operatorname{val}(v)-1\geq 3.$ Then consider the graph where all preimages of $v$ are also replaced by the above construction and the equivalence relation that is equal to that induced by~$\varphi$ over the edges that where already in $G'$ and such that the pre-images of the added edges belong to the same equivalence class, as in Figure~\ref{fg:tree}. If $\operatorname{val}(v_2)>3,$ repeat the construction until both vertices have valence $3.$ 

\begin{figure}[ht]
\centering
\begin{tikzcd}
\begin{tikzpicture}
    \draw[blue](0,1.9)--(1,1.9);
    \draw[orange](0,2.4)--(1,2.4);
    \draw[cyan](1,1.9)--(1,2.4);
    \draw[red](2,1.9)--(1,1.9);
    \draw[verde](2,2.4)--(1,2.4); 
    
    \draw[blue](0,1.2)--(1,1.2);
    \draw[orange](0,1.7)--(1,1.7);
    \draw[cyan](1,1.2)--(1,1.7);
    \draw[red](2,1.2)--(1,1.2);
    \draw[verde](2,1.7)--(1,1.7); 
    
    \draw[blue](0,0.5)--(1,0.5);
    \draw[orange](0,1)--(1,1);
    \draw[cyan](1,0.5)--(1,1);
    \draw[red](2,0.5)--(1,0.5);
    \draw[verde](2,1)--(1,1);  
    
    \draw[->](1,0.3)to(1,0);
    \draw[blue](0,-1)--(1,-1);
    \draw[orange](0,-0.5)--(1,-0.5);
    \draw[red](2,-1)--(1,-1);
    \draw[verde](2,-0.5)--(1,-0.5);
    \draw[cyan](1,-1)--(1,-0.5);
    \foreach \i in {0,1,2} {
    \foreach \j in {-1,-0.5,0.5,1,1.2,1.7,1.9,2.4} {
    	    \vertex{\i, \j}
         }
    	}
     \draw (1,-0.5) node[anchor=south ] {$v_1$};
     \draw (1,-1) node[anchor=north] {$v_2$};
    \end{tikzpicture}\arrow[r,"\pi"]&
    \begin{tikzpicture}
    \draw[blue](0,1.9)--(1,2.1);
    \draw[orange](0,2.4)--(1,2.1);
    \draw[red](2,1.9)--(1,2.1);
    \draw[verde](2,2.4)--(1,2.1); 
    
    \draw[blue](0,1.2)--(1,1.45);
    \draw[orange](0,1.7)--(1,1.45);
    \draw[red](2,1.2)--(1,1.45);
    \draw[verde](2,1.7)--(1,1.45); 
    
    \draw[blue](0,0.5)--(1,0.75);
    \draw[orange](0,1)--(1,0.75);
    \draw[red](2,0.5)--(1,0.75);
    \draw[verde](2,1)--(1,0.75);  
    
    \draw[->](1,0.3)to(1,0);
    \draw[blue](0,-1)--(1,-0.75);
    \draw[orange](0,-0.5)--(1,-0.75);
    \draw[red](2,-1)--(1,-0.75);
    \draw[verde](2,-0.5)--(1,-0.75);
    \foreach \i in {0,2} {
    \foreach \j in {-1,-0.5,0.5,1,1.2,1.7,1.9,2.4} {
    	    \vertex{\i, \j}
         }
    	}
    \foreach \j in {-0.75,0.75,1.45,2.1} {
    	    \vertex{1, \j} }
     \draw (1,-0.75) node[anchor=south ] {$v$};
    \end{tikzpicture}
    \end{tikzcd}\caption{}\label{fg:tree}
\end{figure}

Let us now consider all the possibilities given by the valence of the vertices on the tree, which now we have proved to be at most $3$.

If $\operatorname{val}(v)=3,$ then we claim that its pre-image via $\varphi$ does not contain any edge. 

If this would be the case, then it would be sufficient to replace any copy of $T$ with a graph where $v$ is split into two vertices connected via an edge such that all copied of the same edge belong to the same equivalence class. Repeat as many time as many edges are in $\varphi^{-1}(v),$ as in Figure~\ref{fg:tree3}, and also notice that we can always choose the vertical edge over consecutive vertices of valence two, such that they connect distinct copies of the tree. The resulting graph is such that the pre-image of a vertex of valence~$3$ via the harmonic morphism does not contain edges.
\begin{figure}[ht]
\centering
\begin{tikzcd}
\begin{tikzpicture}
    \draw[blue](0,1.9)--(1,2.15);
    \draw[red](0,2.4)--(1,2.15);
    \draw[cyan](1,2.15)--(2,2.15);
    \draw[verde](2,2.15)--(3,2.15);
    \draw(2,2.15)--(2,1.45);
    \draw[blue](0,1.2)--(1,1.45);
    \draw[red](0,1.7)--(1,1.45);
    \draw[cyan](1,1.45)--(2,1.45);
    \draw[verde](2,1.45)--(3,1.45); 
    
    \draw[blue](0,0.5)--(1,0.75);
    \draw[red](0,1)--(1,0.75);
    \draw[cyan](1,0.75)--(2,0.75);
    \draw[verde](2,0.75)--(3,0.75);
    
    \draw[->](1.5,0.3)to(1.5,0);
    \draw[blue](0,-1)--(1,-0.75);
    \draw[red](0,-0.5)--(1,-0.75);
    \draw[cyan](1,-0.75)--(2,-0.75);
    \draw[verde](2,-0.75)--(3,-0.75);
    \foreach \j in {-1,-0.5,0.5,1,1.2,1.7,1.9,2.4} {
    	    \vertex{0, \j}
         }
    \foreach \i in {1,2,3} {
    \foreach \j in {-0.75,0.75,1.45,2.15}{
    	    \vertex{\i, \j} }}
    \end{tikzpicture}\arrow[r,"\pi"]&
    \begin{tikzpicture}
    \draw[blue](0,1.9)--(1,2.15);
    \draw[red](0,2.4)--(1,2.15);
    \draw[verde](1,2.15)--(2,2.15);
    \draw(1,2.15)--(1,1.45);
    \draw[blue](0,1.2)--(1,1.45);
    \draw[red](0,1.7)--(1,1.45);
    \draw[verde](1,1.45)--(2,1.45);
    
    \draw[blue](0,0.5)--(1,0.75);
    \draw[red](0,1)--(1,0.75);
    \draw[verde](1,0.75)--(2,0.75);
    
    \draw[->](1,0.3)to(1,0);
    \draw[blue](0,-1)--(1,-0.75);
    \draw[red](0,-0.5)--(1,-0.75);
    \draw[verde](1,-0.75)--(2,-0.75);
    \foreach \j in {-1,-0.5,0.5,1,1.2,1.7,1.9,2.4} {
    	    \vertex{0, \j}
         }
    \foreach \i in {1,2} {
    \foreach \j in {-0.75,0.75,1.45,2.15}{
    	    \vertex{\i, \j} }}
    \end{tikzpicture}
    \end{tikzcd}\caption{}\label{fg:tree3}
\end{figure}

If $\operatorname{val}(v)=2,$ and $\varphi^{-1}(v)$ does not contain any edge, then the graph defines the same type obtained by contacting one of the two equivalence classes defined by the incident edge to $v.$ 
Thus we may assume that $\varphi^{-1}(v)$ contains at least one edge, and we show that it is exactly one if the graph is maximal.
Similarly to the previous case, in fact, if there at least two edges in $\varphi^{-1}(v)$, then we could just repeat the above procedure for any additional edge, this is represented in Figure~\ref{fg:tree2}.

\begin{figure}[ht]
\centering
\begin{tikzcd}
\begin{tikzpicture}
    \draw[red](0,2.15)--(1,2.15);
    \draw[cyan](1,2.15)--(2,2.15);
    \draw[blue](2,2.15)--(3,2.15);
    \draw(2,2.15)--(2,1.45);
    \draw[red](0,1.45)--(1,1.45);
    \draw[cyan](1,1.45)--(2,1.45);
    \draw[blue](2,1.45)--(3,1.45); 
    \draw(1,1.45)--(1,0.75);
    \draw[red](0,0.75)--(1,0.75);
    \draw[cyan](1,0.75)--(2,0.75);
    \draw[blue](2,0.75)--(3,0.75);
    
    \draw[->](1.5,0.3)to(1.5,0);
    \draw[red](0,-0.75)--(1,-0.75);
    \draw[cyan](1,-0.75)--(2,-0.75);
    \draw[blue](2,-0.75)--(3,-0.75);

    \foreach \i in {0,1,2,3} {
    \foreach \j in {-0.75,0.75,1.45,2.15}{
    	    \vertex{\i, \j} }}
    \end{tikzpicture}\arrow[r,"\pi"]&
    \begin{tikzpicture}
    \draw[red](0,2.15)--(1,2.15);
    \draw[blue](1,2.15)--(2,2.15);
    \draw(1,2.15)--(1,1.45);
    \draw[red](0,1.45)--(1,1.45);
    \draw[blue](1,1.45)--(2,1.45);
    \draw(1,1.45)--(1,0.75);
    \draw[red](0,0.75)--(1,0.75);
    \draw[blue](1,0.75)--(2,0.75);
    
    \draw[->](1,0.3)to(1,0);
    \draw[red](0,-0.75)--(1,-0.75);
    \draw[blue](1,-0.75)--(2,-0.75);

    \foreach \i in {0,1,2} {
    \foreach \j in {-0.75,0.75,1.45,2.15}{
    	    \vertex{\i, \j} }}
    \end{tikzpicture}
    \end{tikzcd}\caption{}\label{fg:tree2}
\end{figure}

Finally if $\operatorname{val}(v)=1,$ we have at least $2$ vertical edges, by $3$-edge connectivity, and we show that they are exactly $2$.

Assume by contradiction that the number of vertical edges is $3,$ then, again by $3$-edge connectivity, not all with the same endpoints.
Then we can construct a graph $G''$ that splits any copy of $v$ into two vertices $v_1,v_2,$ with vertical edges as in Figure~\ref{fg:tree1}.

Then $G''$ is again trigonal and it is the contraction of the pre-image of added edge in $T$. 
\begin{figure}[ht]
\centering
\begin{tikzcd}
\begin{tikzpicture}
    \draw[cyan](0,2.15)--(1,2.15);
    \draw[red](1,2.15)--(2,2.15);
    \draw (1,1.45) to [out=60, in=300] (1,2.15);
    \draw[cyan](0,1.45)--(1,1.45);
    \draw[red](1,1.45)--(2,1.45);
    \draw[cyan](0,0.75)--(1,0.75);
    \draw[red](1,0.75)--(2,0.75);
    \draw (0,0.75) to [out=60, in=300] (0,1.45);
    \draw (0,0.75) to [out=120, in=240] (0,2.15);
    \draw[->](1,0.3)to(1,0);
    \draw[cyan](0,-0.75)--(1,-0.75);
    \draw[red](1,-0.75)--(2,-0.75);
    \draw (0,-0.75) node[anchor=north] {$v_1$};
    \draw (1,-0.75) node[anchor=north] {$v_2$};
    \foreach \i in {0,1,2} {
    \foreach \j in {-0.75,0.75,1.45,2.15}{
    	    \vertex{\i, \j} }}
\end{tikzpicture}\arrow[r,"\pi"]&\begin{tikzpicture}
    \draw[red](0,2.15)--(1,2.15);
    \draw (0,1.45) to [out=60, in=300] (0,2.15);
    \draw[red](0,1.45)--(1,1.45);
    \draw[red](0,0.75)--(1,0.75);
    \draw (0,0.75) to [out=60, in=300] (0,1.45);
    \draw (0,0.75) to [out=120, in=240] (0,2.15);
    \draw[->](0.5,0.3)to(0.5,0);
    \draw[red](0,-0.75)--(1,-0.75);
    \draw (0,-0.75) node[anchor=north] {$v$};
    \foreach \i in {0,1} {
    \foreach \j in {-0.75,0.75,1.45,2.15}{
    	    \vertex{\i, \j} }}
    \end{tikzpicture}
    \end{tikzcd}\caption{}\label{fg:tree1}
\end{figure}

If the number of edges over $v$ is bigger that $3,$ then it is sufficient to repeat the above procedure.

To sum up, for any $v\in V(T),$ we have considered the minimum number of vertical edges in its pre-image, depending on the valence, and proved that this is precisely the number of edges allowed in order for $G'$ to be maximal. Comparing with our construction of the graph $G_T$ we conclude that $G'=G_T.$

The proof also shows that $\varphi=\varphi_T$
and by Remark~\ref{rk:max} we can conclude $(G,\underline{0})$ is a maximal trigonal graph, since any other morphism would define a bigger number of equivalence classes.
\end{proof}

Lemmas~\ref{lm:2n+1},~\ref{lm:stab} and Theorem~\ref{th:max_cells} then show that the dimension of the moduli of $3$-edge connected trigonal tropical curves of genus $g$ coincides with that of the moduli of genus $g$ algebraic trigonal curves.
\begin{rema}
Let us recall that any stable graph is the limit, via weighted contractions, of a pure and $3$-regular graph, which also represents the maximal cells in the moduli of tropical curves.

Similarly, the proof of Theorem~\ref{th:max_cells} also shows that any $3$-edge connected trigonal graph is a graph which has a refinement that is the limit, via trigonal contractions, of a $3$-ladder $G_T$.
\end{rema} 

\begin{coro}
The moduli space $T_g^{\operatorname{trop},(3)}$ is of pure dimension $6$ for $g=3$ and of pure dimension $2g+1$ for $g>3$.
\end{coro}

This also agrees with~\cite{CD}. Indeed, in Remark~\ref{rk:tree_gon} we have already observed that in the $3$-edge connected case tropical curves of gonality $3$ also have tree gonality $3.$

If instead we drop the $3$-edge connectivity assumption then we need to take also into account hyperelliptic curves. Therefore, the dimension of the corresponding moduli space would be the same as the space $H_g^{tr}$ constructed in~\cite{MC}, which is $3g-3$ and therefore different from the one of the trigonal locus in the algebraic case. 

\begin{coro}
The moduli space $T_g^{\operatorname{trop},(3)}$ is connected through codimension $1$.\end{coro}\begin{proof}
In order to prove that $T_g^{\operatorname{trop},(3)}$ is connected through codimension $1$ we need to show that, for any pair of $3$-ladders $G_{T_1},G_{T_2}$,  there exists a sequence $G_{T_1}=G^0,\dots,G^{2k}=G_{T_2}$ such that $G^i,G^{i+1}$ differ by the contraction of a single class in the equivalence relation defined by the associated morphism to a tree and $G^i$ is a $3$-ladder for $i$ even.

Firstly, let us consider the case where $T_1=T_2.$ Clearly, over the vertices of valence $3$ the two graphs are the same, since there are no vertical edges. If $G_{T_1}, G_{T_2}$ differ by the vertical edges on any vertex of valence $1$, it is sufficient to contract the vertical edge $G_{T_1}$ and not in $G_{T_2},$ and then add the one in $G_{T_2}.$
Instead, if the two graphs differ over a vertex of valence $2,$ then one can obtain from $G_{T_1}$ the corresponding vertical edge in $G_{T_2}$ via a sequence of contraction and addition of triples of edges with the same image via the non-degenerate harmonic morphism to a tree. In fact, let us observe that any two consecutive vertical edges, not with endpoint in the same copies of the tree, can be swapped by contracting the $3$ edge-cut incident to both vertical edges and by adding it again with the vertical edges now glued to the opposite endpoints of the edge-cut, as in Figure~\ref{fg:codim1_1}. With more iterations, one can see that this can be done also if between the two vertical edges there is a vertex of valence $3.$ 

\begin{figure}[ht]
\centering
\begin{tikzcd}
\begin{tikzpicture}
    \draw[red](0,2.15)--(1,2.15);
    \draw[cyan](1,2.15)--(2,2.15);
    \draw[blue](2,2.15)--(3,2.15);
    \draw(2,2.15)--(2,1.45);
    \draw[red](0,1.45)--(1,1.45);
    \draw[cyan](1,1.45)--(2,1.45);
    \draw[blue](2,1.45)--(3,1.45); 
    \draw(1,1.45)--(1,0.75);
    \draw[red](0,0.75)--(1,0.75);
    \draw[cyan](1,0.75)--(2,0.75);
    \draw[blue](2,0.75)--(3,0.75);
    
    \draw[->](1.5,0.3)to(1.5,0);
    \draw[red](0,-0.75)--(1,-0.75);
    \draw[cyan](1,-0.75)--(2,-0.75);
    \draw[blue](2,-0.75)--(3,-0.75);

    \foreach \i in {0,1,2,3} {
    \foreach \j in {-0.75,0.75,1.45,2.15}{
    	    \vertex{\i, \j} }}
    \end{tikzpicture}\arrow[r]&
    \begin{tikzpicture}
    \draw[red](0,2.15)--(1,2.15);
    \draw[blue](1,2.15)--(2,2.15);
    \draw(1,2.15)--(1,1.45);
    \draw[red](0,1.45)--(1,1.45);
    \draw[blue](1,1.45)--(2,1.45);
    \draw(1,1.45)--(1,0.75);
    \draw[red](0,0.75)--(1,0.75);
    \draw[blue](1,0.75)--(2,0.75);
    
    \draw[->](1,0.3)to(1,0);
    \draw[red](0,-0.75)--(1,-0.75);
    \draw[blue](1,-0.75)--(2,-0.75);

    \foreach \i in {0,1,2} {
    \foreach \j in {-0.75,0.75,1.45,2.15}{
    	    \vertex{\i, \j} }}
\end{tikzpicture}&\arrow[l]
    \begin{tikzpicture}
    \draw[red](0,2.15)--(1,2.15);
    \draw[cyan](1,2.15)--(2,2.15);
    \draw[blue](2,2.15)--(3,2.15);
    \draw(2,1.45)--(2,0.75);
    \draw[red](0,1.45)--(1,1.45);
    \draw[cyan](1,1.45)--(2,1.45);
    \draw[blue](2,1.45)--(3,1.45); 
    \draw(1,1.45)--(1,2.15);
    \draw[red](0,0.75)--(1,0.75);
    \draw[cyan](1,0.75)--(2,0.75);
    \draw[blue](2,0.75)--(3,0.75);
    
    \draw[->](1.5,0.3)to(1.5,0);
    \draw[red](0,-0.75)--(1,-0.75);
    \draw[cyan](1,-0.75)--(2,-0.75);
    \draw[blue](2,-0.75)--(3,-0.75);

    \foreach \i in {0,1,2,3} {
    \foreach \j in {-0.75,0.75,1.45,2.15}{
    	    \vertex{\i, \j} }}
    \end{tikzpicture}
    \end{tikzcd}\caption{}\label{fg:codim1_1}
\end{figure}

Then, using the argument for the vertical edges over the vertices of valence $1$ one can always produce a pair of vertical edges with endpoints not in the same copies of the tree, such that one is the vertical edge in $G_{T_2}$ that we one to recover from $G_{T_1}$, as in Figure~\ref{fg:codim1_2}. Iterating these two operations then yields a sequence from $G_{T_1}$ to $G_{T_2}$ with the desired properties. 

\begin{figure}[ht]
\centering
\begin{tikzcd}[sep=small]
\begin{tikzpicture}[scale=0.9]
    \draw(0,2.15)--(0,1.45);\draw(0,0.75)--(0,1.45);
    \draw[red](0,2.15)--(1,2.15);
    \draw[cyan](1,2.15)--(2,2.15);
    \draw[blue](2,2.15)--(3,2.15);
    \draw(2,2.15)--(2,1.45);
    \draw[red](0,1.45)--(1,1.45);
    \draw[cyan](1,1.45)--(2,1.45);
    \draw[blue](2,1.45)--(3,1.45); 
    \draw(1,1.45)--(1,2.15);
    \draw[red](0,0.75)--(1,0.75);
    \draw[cyan](1,0.75)--(2,0.75);
    \draw[blue](2,0.75)--(3,0.75);
    
    \draw[->](1.5,0.3)to(1.5,0);
    \draw[red](0,-0.75)--(1,-0.75);
    \draw[cyan](1,-0.75)--(2,-0.75);
    \draw[blue](2,-0.75)--(3,-0.75);

    \foreach \i in {0,1,2,3} {
    \foreach \j in {-0.75,0.75,1.45,2.15}{
    	    \vertex{\i, \j} }}
    \end{tikzpicture}\arrow[r]&\dots\arrow[r]&
    \begin{tikzpicture}[scale=0.9]
    \draw (0,0.75) to [out=60, in=300] (0,1.45);
    \draw (0,0.75) to [out=120, in=240] (0,2.15);
    \draw[red](0,2.15)--(1,2.15);
    \draw[cyan](1,2.15)--(2,2.15);
    \draw[blue](2,2.15)--(3,2.15);
    \draw(2,2.15)--(2,1.45);
    \draw[red](0,1.45)--(1,1.45);
    \draw[cyan](1,1.45)--(2,1.45);
    \draw[blue](2,1.45)--(3,1.45); 
    \draw(1,1.45)--(1,2.15);
    \draw[red](0,0.75)--(1,0.75);
    \draw[cyan](1,0.75)--(2,0.75);
    \draw[blue](2,0.75)--(3,0.75);
    
    \draw[->](1.5,0.3)to(1.5,0);
    \draw[red](0,-0.75)--(1,-0.75);
    \draw[cyan](1,-0.75)--(2,-0.75);
    \draw[blue](2,-0.75)--(3,-0.75);

    \foreach \i in {0,1,2,3} {
    \foreach \j in {-0.75,0.75,1.45,2.15}{
    	    \vertex{\i, \j} }}
\end{tikzpicture}\arrow[r]&\dots\arrow[r]&
    \begin{tikzpicture}[scale=0.9]
    \draw (0,1.45) to [out=60, in=300] (0,2.15);
    \draw (0,0.75) to [out=120, in=240] (0,2.15);
    \draw[red](0,2.15)--(1,2.15);
    \draw[cyan](1,2.15)--(2,2.15);
    \draw[blue](2,2.15)--(3,2.15);
    \draw(2,2.15)--(2,1.45);
    \draw[red](0,1.45)--(1,1.45);
    \draw[cyan](1,1.45)--(2,1.45);
    \draw[blue](2,1.45)--(3,1.45); 
    \draw(1,1.45)--(1,0.75);
    \draw[red](0,0.75)--(1,0.75);
    \draw[cyan](1,0.75)--(2,0.75);
    \draw[blue](2,0.75)--(3,0.75);
    
    \draw[->](1.5,0.3)to(1.5,0);
    \draw[red](0,-0.75)--(1,-0.75);
    \draw[cyan](1,-0.75)--(2,-0.75);
    \draw[blue](2,-0.75)--(3,-0.75);

    \foreach \i in {0,1,2,3} {
    \foreach \j in {-0.75,0.75,1.45,2.15}{
    	    \vertex{\i, \j} }}
\end{tikzpicture}
    \end{tikzcd}\caption{}\label{fg:codim1_2}
\end{figure}

Let us then consider the case where $T_1\neq T_2.$ By definition, they are trees over the same number of vertices, which we can make correspond such that they share the maximum number of edges. Starting from $T_1$ by removing any leaf-edge which is not in $T_2$ and then add instead the edge in $T_2$ connecting the leaf with the common path and repeating until we obtain $T_2$ defines a sequence between the two trees ${T_1}=T^0,\dots,T^{2h}={T_2}$ such that $T^i,T^{i+1}$ differ by a single edge contraction and $T^i$ has the maximum number of edges for $i$ even.
Any contraction of an edge in $T_1$ clearly corresponds to the contraction of three edges in $G_{T_1}$ having the same edge as an image. Moreover, as in Figures~\ref{fg:tree},\ref{fg:tree3},~\ref{fg:tree2},~\ref{fg:tree1}, any addition of an edge, also corresponds to the addition of tree edges which yield a $3$-ladder. Therefore given $G_{T_1},G_{T_2},$ with $T_1\neq T_2,$ one can always construct a sequence from $G_{T_1}$ to a $3$-ladder over $T_2$ of contractions and additions of triples of edges in the same equivalence class and then use the first argument to extend the sequence to $G_{T_2}.$
\end{proof}

Finally, we would like to relate a $3$-edge connected trigonal type $(G,w,\varphi)$, with the dual graph of an admissible cover. The construction of the latter can be found in~\cite[Section 2.3]{LC}.

In particular, nodes map to nodes, therefore no contraction of edges is allowed, but as we proved in Proposition~\ref{prp: harm_contractions} we can consider instead its tropical modification with no contractions.

We have indeed already observed in Remark~\ref{rk:tree_gon} that such a tropical modification is a tropical morphism, for which the Riemann-Hurwitz inequality holds, and we will now prove that equality holds without further tropical modifications.

We conclude by relating the moduli space of $3$-edge connected trigonal covers with that of tropical admissible covers of degree $3$, as defined in~\cite{CMR}. 
In particular, they define in~\cite[Definition 16]{CMR} a \emph{tropical admissible cover} of tropical curves  $\psi:\Gamma_{\operatorname{src}}=(G_{\operatorname{src}},l_{\operatorname{src}})\to \Gamma_{\operatorname{tgt}}=(G_{\operatorname{tgt}},l_{\operatorname{tgt}})$ as an harmonic morphism satisfying the \emph{local Riemann-Hurwitz equation} at any point, i.e. such that for any $v\in V(G_{\operatorname{src}})$ if~$v'=\psi(v),$
\begin{equation}\label{eq:RH}
    2-2w(v)=m_{\psi}(v)(2-2w(v'))-\sum_{e\in E_v(G_{\operatorname{src}})}(\mu_\psi(e)-1).
\end{equation}

\begin{prop}\label{prp:adm}
Let $(G,w,\tilde{\varphi}_T)$ be a $3$-edge connected trigonal type with $G$ such that $(G_T)^{\operatorname{st}}=G$ where $G_T$ is a $3$-ladder and $\tilde{\varphi}_T: \tilde G_T\to\tilde T$ its tropical modification with no contractions of $\varphi_T$.

Then, the trigonal type $(G,w,\tilde{\varphi_T})$ and any $\tilde{\varphi_T}$-contraction of it is the combinatorial type of a tropical admissible cover.
\end{prop}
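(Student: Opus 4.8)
The plan is to check, directly against the defining conditions of the combinatorial type of a tropical admissible cover as recalled in \cite{CMR} and \cite[Section 2.3]{LC}, the following three features of $\tilde{\varphi}_T$: that it is a non-degenerate harmonic morphism of degree $3$ with no edge contractions onto a tree, and that the \emph{local} Riemann--Hurwitz relation holds with equality at every vertex. The first two are immediate from the construction: $\tilde{\varphi}_T$ is non-degenerate and harmonic of degree $3$ because $\varphi_T$ is, it has no contractions by Proposition \ref{prp: harm_contractions}, and the target $\tilde T$ is a tree obtained from $T$ by attaching leaves, which play the role of the branch legs of the cover. So the real content is the local Riemann--Hurwitz equality, together with its stability under $\tilde{\varphi}_T$-contractions.

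First I would analyze the vertices of $\tilde G_T$ according to the valence of their image in $T$, using the fact established in the proof of Theorem \ref{th:max_cells} that $m_{\varphi_T}(v)=1$ at every vertex of $G_T$ and that $\mu_{\varphi_T}(e)=1$ on every non-contracted edge. Over a vertex of $T$ of valence $3$ the three preimages are unramified and contribute nothing. Over the vertices of valence $2$ and $1$ the contracted vertical, resp. leaf-connecting, edges are exactly those to which Proposition \ref{prp: harm_contractions} attaches a midpoint $w$ together with added leaves; at such a $w$ one computes $m_{\tilde{\varphi}_T}(w)=2$ over the corresponding added leaf of $\tilde T$, while the added leaf-endpoints carry horizontal multiplicity $1$. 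Evaluating the genus-$0$ local Riemann--Hurwitz contribution gives $R_w=2$ at each inserted midpoint and $R=0$ elsewhere. I would then match this with the global value $\deg R_{\tilde{\varphi}_T}=(2g-2)-3(2\cdot 0-2)=2g+4$: the midpoints number $\mu_2+2\mu_1$, and using the identities $\mu_1=\mu_3+2$ and $g=\mu_1+\mu_2+\mu_3$ from the proof of Lemma \ref{lm:2n+1} one gets $2(\mu_2+2\mu_1)=2g+4$. Since by Remark \ref{rk:tree_gon} the Riemann--Hurwitz inequality $R_v\ge 0$ already holds pointwise, matching the global degree forces the equality at every vertex.

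For the $\tilde{\varphi}_T$-contractions I would argue that the property is preserved along the maps produced by Proposition \ref{prp:contr}. Contracting an equivalence class whose image is an edge of $T$ corresponds to contracting that edge in the tree, so $\tilde T$ degenerates within the moduli of genus-$0$ curves while the source undergoes the matching weighted contraction; the resulting morphism is again non-degenerate, harmonic of degree $3$ and without contractions by Proposition \ref{prp:contr}. The point to verify is that the local equality survives. When the contraction creates loops the vertex genus jumps, and I would check, following the four cases in the proof of Proposition \ref{prp:contr}, that the local formula rebalances: weighted contraction preserves the total genus, and the leaves added on source and target in that proof are precisely what keeps every fiber of degree $3$, so the local Riemann--Hurwitz equality is maintained at each vertex, including the new positive-genus ones.

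The step I expect to be the main obstacle is exactly this local Riemann--Hurwitz bookkeeping: first pinning down the ramification contribution $R_w$ of the inserted midpoints under the genus-$0$ local formula and reconciling it with the global count, and then re-verifying equality after contractions that raise vertex genus, where one must track how the newly created loops and the added leaves redistribute the ramification. Once this is settled, the identification with the combinatorial types of \cite{CMR} is formal: the discrete data consisting of the graphs, the degree-$3$ harmonic map, the indices $\mu\equiv 1$ off the ramification locus, and the branch legs is exactly a combinatorial type of tropical admissible cover, and the same conclusion holds for every degeneration obtained through a $\tilde{\varphi}_T$-contraction.
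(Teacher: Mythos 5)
Your reduction of the problem to the local Riemann--Hurwitz equality is the right focus, and your case analysis correctly isolates the only delicate vertices: the midpoints $w$ that Proposition \ref{prp: harm_contractions} inserts on the formerly contracted edges, where $m_{\tilde{\varphi}_T}(w)=2$ over the added leaf of $\tilde T$. But the way you close the argument does not work. You compute a ramification contribution $R_w=2$ at each such midpoint; measured against the paper's equation \eqref{eq:RH} this says precisely that the local Riemann--Hurwitz \emph{equality} fails there, since the left-hand side is $2$ while $m_{\tilde{\varphi}_T}(w)\cdot 2-\sum_{e}(\mu_{\tilde\varphi_T}(e)-1)=4-0=4$. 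Your global bookkeeping $\deg R=2g+4=2(\mu_2+2\mu_1)$ is correct arithmetic, but the inference ``$R_v\ge 0$ pointwise and the global degrees match, hence equality at every vertex'' is a non sequitur: that mechanism forces $R_v=0$ everywhere only when the global degree being matched is $0$, and here it is $2g+4>0$ --- indeed your own local computation exhibits exactly the vertices at which $R_v>0$. To make the conclusion true one has to feed the ramification into the combinatorial type as \emph{legs} in the sense of \cite{CMR} (two legs of expansion factor $2$ at each midpoint, mapping to branch legs attached at the corresponding leaf of $\tilde T$), after which the local equation with legs balances as $2=2\cdot2-(1+1)$; your write-up never introduces this data, so as stated the key step fails.

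For comparison, the paper's proof is much shorter and purely local: it first reduces to the weightless stabilization of a $3$-ladder (the local equation being preserved under contraction, which disposes of the $\tilde\varphi_T$-contractions in one line rather than by the case analysis you sketch), and then observes that $m_{\varphi}(v)=1$ at every vertex and $\mu_{\psi}(e)=1$ on every edge, so both sides of \eqref{eq:RH} equal $2$. It performs no global count and, notably, is silent about the vertices created by the tropical modification where the local degree is $2$; your computation of $R_w=2$ is in that sense the more informative one. But having found it, you must either incorporate the branch legs or restrict the verification to $V(G_T)$ --- the appeal to the global degree cannot substitute for either.
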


\begin{proof}
Since the local Riemann-Hurwitz equation is closed by contraction, we can always assume our graph to be weightless and the stabilization of a $3$-ladder $G_T$.
Then the left hand-side of~\eqref{eq:RH} is $2$
while the right hand side is, 
$$
    2m_{\psi}(v)-\sum_{e\in E_v(G_{\operatorname{src}})}(\mu_\psi(e)-1).
$$
We also observed, in the proof of Theorem~\ref{th:max_cells}, that maximal $3$-edge connected trigonal graphs are such that $m_{\varphi}(v)=1$ for any vertex. 

Moreover, from our construction of the harmonic morphism in the $3$-edge connected case we have $\mu_\psi(e)=1$ for any $e$ which is sent to an edge of the tree. Since $\psi$ has no contraction, then $\mu_\psi(e)=1$ for any $e\in E_x(G_{\operatorname{src}})$, hence equality holds.
\end{proof}
\bibliographystyle{alpha}
\bibliography{bib}
\end{document}